\theoremstyle{definition}
\newtheorem{dfn}{Definition}[section]
\newtheorem{thm}[dfn]{Theorem}
\newtheorem{lem}[dfn]{Lemma}
\newtheorem{prp}[dfn]{Propostion}
\newtheorem{cor}[dfn]{Corollary}
\newtheorem{rem}[dfn]{Remark}
\newtheorem*{out}{Outline}
\newtheorem*{ack}{Acknowledgement}
\newtheorem{cla}{Claim}
\newtheorem*{pf}{Proof of Theorem \ref{yandong}}
\newtheorem*{pf2}{Proof of Corollary \ref{yandong2}}
\newtheorem*{pf3}{Proof of Corollary \ref{yandong3}}
\begin{document}

\title{Stable lattices in modular Galois representations and Hida deformation}
\author{Dong Yan}
\date{}
\maketitle
\begin{abstract}
In this paper, we discuss the variation of the numbers of the isomorphic classes of stable lattices when the weight and the level vary in a Hida deformation by using the Kubota-Leopoldt $p$-adic $L$-function. Then in Corollary \ref{yandong3}, we give a sufficient condition for the numbers of the isomorphic classes of stable lattices in Hida deformation to be infinite.
\end{abstract}
\tableofcontents

\section{Introduction}

Fix a prime $p \geq 3$. We denote by $\mathbf{Q}\left(\mu_{p^{\infty}} \right)$ the extension of the field of rational numbers $\mathbf{Q}$ obtained by adjoining all $p$-power roots of unity. We fix a complex embedding $\overline{\mathbf{Q}} \hookrightarrow \mathbf{C}$ and a $p$-adic embedding $\overline{\mathbf{Q}} \hookrightarrow \overline{\mathbf{Q}}_p$ of an algebraic closure $\overline{\mathbf{Q}}$ throughout the paper, where $\mathbf{C}$ is the field of complex numbers and $\overline{\mathbf{Q}}_p$ an algebraic closure of the field $\mathbf{Q}_p$ of $p$-adic numbers. We fix a topological generator $u$ of $1+p\mathbf{Z}_p$ throughout the paper. We denote by $\mathbf{Q}_{\infty}$ the cyclotomic $\mathbf{Z}_p$-extension of $\mathbf{Q}$. Let $$\chi_{\mathrm{cyc}} : \mathrm{Gal}\left(\overline{\mathbf{Q}}/\mathbf{Q} \right) \twoheadrightarrow \mathrm{Gal}\left(\mathbf{Q}\left(\mu_{p^{\infty}} \right)/\mathbf{Q} \right) \stackrel{\sim}{\rightarrow} \mathbf{Z}_p^{\times}$$ be the $p$-adic cyclotomic character. Thus $\chi_{\mathrm{cyc}}$ is decomposed into the product $\chi_{\mathrm{cyc}}=\kappa_{\mathrm{cyc}}\omega$ where $$\kappa_{\mathrm{cyc}} : \mathrm{Gal}\left(\mathbf{Q}_{\infty}/\mathbf{Q} \right) \stackrel{\sim}{\rightarrow} \mathrm{Gal}\left(\mathbf{Q}\left(\mu_{p^{\infty}} \right)/\mathbf{Q}\left(\mu_p \right) \right) \stackrel{\chi_{\mathrm{cyc}}}{\rightarrow} 1+p\mathbf{Z}_p$$ is the canonical character and 
$$\omega : \mathrm{Gal}\left(\mathbf{Q}\left(\mu_p \right)/\mathbf{Q} \right) \stackrel{\sim}{\rightarrow} \mathrm{Gal}\left(\mathbf{Q}\left(\mu_{p^{\infty}} \right)/\mathbf{Q}_{\infty} \right) \stackrel{\chi_{\mathrm{cyc}}}{\rightarrow} \mu_{p-1}$$ the Teichm$\ddot{\mathrm{u}}$ller character. Let $\mathcal{O} \subset \overline{\mathbf{Q}}_p$ be a commutative ring which is finite flat over $\mathbf{Z}_p$ and let $\psi$ be a Dirichlet character modulo $M$. We denote by $S_k\left(\Gamma_0\left(M \right), \psi, \mathcal{O} \right)$ the space of cusp forms of weight $k$, level $M$, Neben character $\psi$ and Fourier coefficients in $\mathcal{O}$. We also denote by the same symbol $\psi$ the corresponding character of $\mathrm{Gal}\left(\mathbf{Q}(\mu_M)/\mathbf{Q} \right) \cong \left(\mathbf{Z}/M \right)^{\times}$. For a group $\Delta_{M}$ which is isomorphic to $\mathrm{Gal}\left(\mathbf{Q}(\mu_M)/\mathbf{Q} \right)$, a $\mathbf{Z}_p$-module $\mathcal{M}$ which has a $\mathbf{Z}_p$-linear action of $\Delta_{M}$ and a character $\varepsilon$ of $\Delta_{M}$, we denote by $\mathcal{M}^{\varepsilon}=\mathcal{M} \displaystyle\otimes_{\mathbf{Z}_p[\Delta_M]} \mathbf{Z}_p[\varepsilon]$.

In 1976, Ribet \cite{Ri76} proved the converse of Herbrand's theorem as follows:
\begin{thm}[Ribet]\label{1.1}

Let $k$ be an even integer satisfying $2 \leq k \leq p-3$ and $B_k$ the $k$-th Bernoulli number. We denote by $\mathrm{Cl}(\mathbf{Q}(\mu_p))[p^{\infty}]$ the $p$-part of the ideal class group of $\mathbf{Q}(\mu_p)$ on which the Galois group $\mathrm{Gal}(\overline{\mathbf{Q}}/\mathbf{Q})$ acts by functoriality. Suppose $p$ divides $B_k$. Then $\mathrm{Cl}(\mathbf{Q}(\mu_p))[p^{\infty}]^{\omega^{1-k}} \neq 0.$

\end{thm}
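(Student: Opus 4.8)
The plan is to prove the statement in the manner of Ribet, by producing an Eisenstein congruence in weight $k$ and level one and then reading the cohomology class it provides inside the ideal class group. First I would use the hypothesis $p\mid B_k$ to manufacture a cusp form. For even $k$ with $2\leq k\leq p-3$, the weight-$k$, level-one Eisenstein series with nonconstant Fourier coefficients $\sigma_{k-1}(n)$ has constant term $-B_k/2k$, which is a $p$-adic unit times $B_k$ since $p\nmid 2k$; hence modulo a prime $\mathfrak{p}$ of $\overline{\mathbf{Z}}_p$ above $p$ it reduces to the $q$-expansion of a cusp form. Using that $S_k(\mathrm{SL}_2(\mathbf{Z}))\neq 0$ in every case in which $p\mid B_k$ can occur for such $k$ (the finitely many even weights with $S_k(\mathrm{SL}_2(\mathbf{Z}))=0$ being excluded because the numerators of the corresponding $B_k$ are divisible by no admissible $p$), I would apply the Deligne--Serre lemma to obtain a normalized Hecke eigenform $f\in S_k(\mathrm{SL}_2(\mathbf{Z}),\overline{\mathbf{Z}}_p)$ with $a_\ell(f)\equiv 1+\ell^{k-1}\pmod{\mathfrak{p}}$ for every prime $\ell$. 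In particular $a_p(f)\equiv 1\pmod{\mathfrak{p}}$ is a $p$-adic unit, so $f$ is ordinary at $p$.

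Next I would pass to Galois representations. Let $\mathcal{O}$ be the ring of integers of a sufficiently large finite extension $K$ of $\mathbf{Q}_p$, with maximal ideal $\mathfrak{m}$ and residue field $\mathbf{F}$, and let $\rho_f\colon\mathrm{Gal}(\overline{\mathbf{Q}}/\mathbf{Q})\to\mathrm{GL}_2(K)$ be Deligne's representation attached to $f$: continuous, irreducible (as $f$ is genuinely cuspidal), unramified outside $p$, with $\mathrm{tr}\,\rho_f(\mathrm{Frob}_\ell)=a_\ell(f)$ and $\det\rho_f=\chi_{\mathrm{cyc}}^{k-1}$. The Frobenius characteristic polynomials together with the congruence of the first step and $\chi_{\mathrm{cyc}}\equiv\omega\pmod{p}$ show that the residual representation has semisimplification $\mathbf{1}\oplus\omega^{k-1}$, and these two characters are distinct because $2\leq k\leq p-3$. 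I would then invoke Ribet's lemma, applied with the subobject $\mathbf{1}$ and the quotient $\omega^{k-1}$: since $\rho_f$ is irreducible with reducible, non-scalar reduction, there is a $\mathrm{Gal}(\overline{\mathbf{Q}}/\mathbf{Q})$-stable $\mathcal{O}$-lattice $T$ such that $\overline{\rho}:=T/\mathfrak{m}T$ is a \emph{non-split} extension $0\to\mathbf{F}(\mathbf{1})\to\overline{\rho}\to\mathbf{F}(\omega^{k-1})\to 0$. Its extension class is a nonzero element of $H^1(\mathbf{Q},\mathbf{F}(\omega^{1-k}))$, and after restricting scalars to $\mathbf{F}_p$ we obtain a nonzero class $[c]\in H^1(\mathbf{Q},\mathbf{F}_p(\omega^{1-k}))$.

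Now I would control the ramification of $c$. It is unramified outside $p$ because $\rho_f$ is. At $p$ I would use ordinarity: $\rho_f|_{G_{\mathbf{Q}_p}}$ is reducible with unramified quotient on which Frobenius acts by the unit root of $X^2-a_p(f)X+p^{k-1}$, which reduces to $1$; hence $\overline{\rho}|_{G_{\mathbf{Q}_p}}$ contains a $G_{\mathbf{Q}_p}$-stable line with character $\omega^{k-1}$, in addition to the line $\mathbf{F}(\mathbf{1})$ coming from the global filtration, and since $\omega^{k-1}\neq\mathbf{1}$ the restriction $\overline{\rho}|_{G_{\mathbf{Q}_p}}$ is the direct sum of these two lines, so $c|_{G_{\mathbf{Q}_p}}=0$. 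Consequently $c$ becomes unramified at every place after restriction to $\mathrm{Gal}(\overline{\mathbf{Q}}/\mathbf{Q}(\mu_p))$, where $\omega$ is trivial, so it becomes a genuine homomorphism $\overline{c}\colon\mathrm{Gal}(\overline{\mathbf{Q}}/\mathbf{Q}(\mu_p))\to\mathbf{F}_p$; inflation--restriction and $p\nmid[\mathbf{Q}(\mu_p):\mathbf{Q}]$ show $\overline{c}\neq 0$. By class field theory $\overline{c}$ factors through $\mathrm{Cl}(\mathbf{Q}(\mu_p))[p^{\infty}]$, and the cocycle relation makes the resulting nonzero homomorphism to $\mathbf{F}_p$ equivariant for the $\omega^{1-k}$-action of $\mathrm{Gal}(\mathbf{Q}(\mu_p)/\mathbf{Q})$; hence it is nonzero on the $\omega^{1-k}$-component, and $\mathrm{Cl}(\mathbf{Q}(\mu_p))[p^{\infty}]^{\omega^{1-k}}\neq 0$.

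The deepest external input is Deligne's construction of $\rho_f$ and its basic properties, which I would take as known. Granting this, the step I expect to be the main obstacle is the choice of the \emph{stable lattice}: one must extract from the irreducibility of $\rho_f$ a lattice whose reduction is non-split and, crucially, non-split \emph{in the orientation with the trivial character as subobject}, so that the class lies in $H^1(\mathbf{Q},\mathbf{F}_p(\omega^{1-k}))$ rather than in $H^1(\mathbf{Q},\mathbf{F}_p(\omega^{k-1}))$; only this orientation, combined with the ordinarity of $f$ at $p$, yields a class that is unramified everywhere over $\mathbf{Q}(\mu_p)$ and detects the eigenspace $\omega^{1-k}$. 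Proving Ribet's lemma itself (via the action of $\rho_f$ on the tree of stable lattices) is part of this work. A more routine technical point is the disposal of the finitely many weights $k$ with $S_k(\mathrm{SL}_2(\mathbf{Z}))=0$.
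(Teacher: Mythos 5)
Your outline is essentially correct, and all the ingredients you invoke (the Victor Miller–type integrality argument making the reduction of $E_k$ a mod-$p$ cusp form, the Deligne--Serre lifting lemma, irreducibility of $\rho_f$, Ribet's lemma in the orientation with $\mathbf{1}$ as subobject, and the ordinary local structure at $p$) do assemble into a proof; you correctly identify the orientation of the lattice as the delicate point, and your local-splitting argument at $p$ is the right way to exploit it, since the global sub-line carries the trivial character while the $D_p$-stable line coming from ordinarity is ramified with inertia character $\omega^{k-1}\neq\mathbf{1}$. Note, however, that the paper does not prove Theorem \ref{1.1} at all --- it quotes it from Ribet \cite{Ri76} --- and the method it sketches is Ribet's original one, which differs from yours: Ribet works with a normalized eigenform in $S_2(\Gamma_0(p),\chi)$ (weight $2$, level $p$, nebentypus $\omega^{k-2}$) congruent to an Eisenstein series, so that the residual representation is realized in the $p$-torsion of an abelian variety and the unramifiedness at $p$ of the extension cut out by the stable lattice is established by geometric arguments about good reduction and finite flat group schemes, rather than by the ordinarity theorem of Mazur--Wiles/Wiles (Theorem \ref{w}) that your weight-$k$, level-one route requires. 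What your route buys is a cleaner argument that never leaves level one and meshes with the ordinary/Hida-theoretic framework of this paper (your local computation at $p$ is essentially the same mechanism used in Lemma \ref{4} and Proposition \ref{5}); what Ribet's original route buys is independence from the local theory of ordinary forms, which was not available at the time. One small point to keep in mind when writing this up: to produce the $D_p$-stable line of character $\omega^{k-1}$ inside $T/\mathfrak{m}T$ for the particular lattice $T$ furnished by Ribet's lemma, you should intersect $T$ with the ramified local sub-line of $V$ and use that this intersection is saturated in $T$, so that its image in the reduction is indeed a line; as stated this step is implicit in your sketch.
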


The method of Ribet's proof is to construct a normalized Hecke eigen cusp form $f=\displaystyle\sum_{n=1}^{\infty}a(n, f)q^n \in S_2(\Gamma_0(p), \chi)$ which is congruent to Eisenstein series by the condition $p$ divides $B_k$. Then by using the Galois representation $\rho_f$ attached to $f$ due to Deligne and Shimura, Ribet constructed an unramified $p$-extension of $\mathbf{Q}(\mu_p)$ by using a canonical stable lattice (see Proposition \ref{1.2} below) of $\rho_f$. By extending Ribet's method, Mazur-Wiles \cite{MW} and Wiles \cite{Wi90} proved the Iwasawa main conjecture for $\mathbf{Q}$ and for totally real fields.

The key lemma Ribet used, which is called ``Ribet's lemma", is the following proposition:

\begin{prp}[Ribet's lemma]\label{1.2}

Let $\left(\mathcal{O}, \varpi, \mathcal{O}/\left(\varpi \right) \right)$ be the ring of integers of a finite extension of $\mathbf{Q}_p$ where $\varpi$ is an uniformizer of $\mathcal{O}$. Let $K=\mathrm{Frac}(\mathcal{O})$ be the fraction field of $\mathcal{O}$ and $V$ a $2$-dimensional $K$-vector space. For a given $p$-adic representation $$\rho : G \rightarrow \mathrm{Aut}_K(V)$$ of a compact group $G$, let $\bar{\rho}^{\mathrm{ss}}$ be the semi-simplification of the mod $\varpi$ representation (see Section 2.1 below).
Suppose $\rho$ is irreducible and $\bar{\rho}^{\mathrm{ss}} \cong \psi_1 \oplus \psi_2$, where $\psi_1, \psi_2 : G \rightarrow \left(\mathcal{O}/\left(\varpi \right) \right)^{\times}$ are characters. Then there exists a stable lattice $T \subset V$ for which $\bar{\rho}_{T}$ is the form $\begin{pmatrix} \psi_1 & * \\ 0 & \psi_2 \end{pmatrix}$ but is not semi-simple. 
\end{prp}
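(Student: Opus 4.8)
The plan is to run the standard argument via the action of $G$, through $\rho$, on the Bruhat--Tits tree $\mathcal{T}$ of $\mathrm{PGL}_2(K)$. Recall that the vertices of $\mathcal{T}$ are the homothety classes $[L]$ of $\mathcal{O}$-lattices in $V$, two classes being joined by an edge exactly when representatives can be chosen with $\varpi L\subsetneq L'\subsetneq L$; since the residue field $\mathcal{O}/(\varpi)$ is finite, $\mathcal{T}$ is locally finite. The homothety classes of $G$-stable lattices form precisely the fixed-point set $\mathcal{T}^{G}$, which is a convex, hence connected, subtree (being the intersection of the fixed subtrees of the individual elliptic isometries $\rho(g)$), and the neighbours of a vertex $[L]$ correspond bijectively to the lines of $L/\varpi L$, a neighbour being $G$-fixed if and only if the corresponding line is $\bar\rho_{L}$-stable. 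First, $\mathcal{T}^{G}\neq\emptyset$: $\rho(G)$ is a compact, hence bounded, subgroup of $\mathrm{GL}_2(K)$, so it meets only finitely many cosets of the open stabiliser of any fixed lattice $L_0$, and therefore $\sum_{g\in G}\rho(g)L_0$ is a finite sum of lattices, i.e. again a lattice, manifestly $G$-stable.

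I would then use the irreducibility of $\rho$ to determine the shape of $\mathcal{T}^{G}$. For every stable lattice $T$ one has $(\bar\rho_{T})^{\mathrm{ss}}\cong\psi_1\oplus\psi_2$, which is reducible; a $2$-dimensional representation whose semisimplification is a sum of characters is itself reducible, so $\bar\rho_{T}$ has a stable line and $[T]$ has a fixed neighbour. Hence $\mathcal{T}^{G}$ has no isolated vertex, so it has at least two vertices. It is also \emph{finite}: otherwise, being locally finite, it would contain an infinite geodesic ray, which after conjugation is $([\mathcal{O}e_1\oplus\varpi^{n}\mathcal{O}e_2])_{n\geq 0}$, and stability of all these lattices forces the lower-left entry of $\rho$ in the basis $(e_1,e_2)$ to be divisible by every power of $\varpi$, hence to vanish -- making $Ke_1$ a $G$-stable line, contrary to irreducibility. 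Moreover every vertex $[T]$ of $\mathcal{T}^{G}$ has exactly one or exactly two stable lines: it cannot have none since $(\bar\rho_T)^{\mathrm{ss}}$ is reducible, nor three or more, since three distinct stable lines would force $\bar\rho_{T}$ to be scalar and hence $[T]$ to have infinite degree. So $\mathcal{T}^{G}$ has all vertex-degrees $\leq 2$, i.e. it is a finite path $[T_0]-[T_1]-\cdots-[T_m]$; its interior vertices (having two stable lines) are split, and its two ends (having a unique stable line) are such that $\bar\rho_{T_0}$ and $\bar\rho_{T_m}$ are non-split extensions of the shape $\begin{pmatrix}\psi_i & * \\ 0 & \psi_j\end{pmatrix}$ with $\{\psi_i,\psi_j\}=\{\psi_1,\psi_2\}$.

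What remains is to obtain the diagonal in the prescribed order $(\psi_1,\psi_2)$ at one of the two ends, and this is the one delicate bookkeeping step -- the part I expect to be the main nuisance, since exhibiting \emph{some} non-split stable lattice is now free. If $\psi_1=\psi_2$ there are no split vertices, so $\mathcal{T}^{G}$ is a single edge whose two ends are non-split self-extensions of $\psi_1$, which is already the asserted form; so assume $\psi_1\neq\psi_2$. To each oriented edge $[L]\to[L']$ of $\mathcal{T}^{G}$ attach the $\bar\rho_{L}$-character of the line of $L/\varpi L$ that corresponds to the neighbour $[L']$; a short computation in an adapted basis shows that (i) at a split vertex the two outgoing characters are $\psi_1$ and $\psi_2$, and (ii) on any edge the characters attached to its two orientations are distinct (their equality would again force $\psi_1=\psi_2$). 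Applying (i) and (ii) alternately along the path shows that the outgoing character of each step $[T_r]\to[T_{r+1}]$ equals the sub-character of the leaf $[T_0]$, and applying (ii) once more at the last edge shows that the sub-character of the other leaf $[T_m]$ is the \emph{complementary} one; hence the two ends have different sub-characters, so one of them -- call it $T$ -- satisfies $\bar\rho_{T}\cong\begin{pmatrix}\psi_1 & * \\ 0 & \psi_2\end{pmatrix}$ and is non-split, which is the required lattice. (One can also phrase the entire argument in matrices, rescaling lattices along $e_2\mapsto\varpi^{n}e_2$; finiteness of $\mathcal{T}^{G}$ is exactly what guarantees this terminates at a non-semisimple reduction.)
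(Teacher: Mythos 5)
Your argument has one genuine flaw, and it sits exactly where the paper's statement is more general than the case you really treat. The step ``three distinct stable lines would force $\bar\rho_T$ to be scalar and hence $[T]$ to have infinite degree'' proves nothing: the tree is locally finite of valence $q+1$ with $q=\sharp\left(\mathcal{O}/(\varpi)\right)$, so a scalar reduction gives exactly $q+1$ fixed neighbours, which is perfectly compatible with finiteness of the fixed set. Consequently ``all vertex-degrees $\leq 2$, i.e.\ it is a finite path'' is not established in general, and in the case $\psi_1=\psi_2$ it is simply false, as is ``there are no split vertices, so $\mathcal{T}^G$ is a single edge'': take $G=1+pM_2(\mathbf{Z}_p)$ with $\rho$ the inclusion into $\mathrm{GL}_2(\mathbf{Q}_p)$; this $\rho$ is irreducible, $\bar\rho^{\mathrm{ss}}$ is trivial $\oplus$ trivial, the standard vertex has scalar (split) reduction, and the fixed subtree is the closed ball of radius one around it, a star with $q+1$ leaves. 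Note that Proposition \ref{1.2} as stated does not assume $\psi_1\neq\psi_2$, so this case cannot be waved away. The conclusion does survive there, and the repair is short: your nonemptiness, finiteness (the infinite-ray argument is fine) and ``no isolated vertex'' facts give a finite fixed subtree with at least two vertices, hence a leaf; a leaf has exactly one stable line, so its reduction is reducible but not semi-simple, and the ordering of the diagonal characters is vacuous when $\psi_1=\psi_2$. In the case $\psi_1\neq\psi_2$ your degree bound is correct, but for a different reason than the one you wrote: a vertex with three stable lines would have scalar semi-simplification, contradicting $\psi_1\neq\psi_2$.

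With that local repair the rest is correct, including the part you rightly flag as delicate: the orientation/character bookkeeping along the segment, using that the two orientations of an edge carry the sub- and quotient-character of the same reduction, does force the two ends to have complementary sub-characters, so one end realizes $\begin{pmatrix} \psi_1 & * \\ 0 & \psi_2 \end{pmatrix}$ non-split. As for comparison with the paper: the paper does not prove Proposition \ref{1.2} at all (it is quoted as Ribet's lemma and used as an input, e.g.\ in the proof of Proposition \ref{2.8}), and the tree facts you re-derive are exactly those it cites from Bella\"iche--Chenevier in Proposition \ref{2.c}, which assume $\psi_1\neq\psi_2$ for the segment statement. The matrix variant you mention in your closing parenthesis --- diagonalize $\rho(g_0)$ via Hensel's lemma and rescale the basis so that the ideal generated by the entries $b(g)$ becomes $\mathcal{O}$ --- is precisely how the paper manufactures a non-semisimple stable lattice with diagonal $(\psi_1,\psi_2)$ inside the proof of Proposition \ref{2.8}, again under the standing hypothesis $\psi_1\neq\psi_2$; your tree route is the geometric form of the same argument, with the added burden (and added generality) of the degenerate case.
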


Let $f$ be a normalized Hecke eigen cusp form and $\rho_f : \mathrm{Gal}(\overline{\mathbf{Q}}/\mathbf{Q}) \rightarrow GL_2(K)$ the continuous irreducible representation attached to $f$, where $K$ is the field $\mathbf{Q}_p(\left\{a(n, f) \right\}_{n \geq 1})$. We denote by $\mathcal{L}(\rho_f)$ the set of the isomorphic classes of stable lattices of $\rho_f$. Since $\rho_f$ is irreducible, $\sharp \mathcal{L}(\rho_f)$ is finite (see (5) of Proposition \ref{2.c} below). The author wants to determine $\sharp \mathcal{L}(\rho_f)$ for a given $f$. For example the known result is obtained by Greenberg and Monsky for the Ramanujan's cusp form $\Delta=q\displaystyle \prod^{\infty}_{n=1}(1-q^n)^{24} \in S_{12}\left(\mathrm{SL}_2\left(\mathbf{Z} \right) \right)$ and $p=691$:

\begin{prp}[Greenberg, Monsky]\label{1.4}

Let $\rho_{\Delta} : \mathrm{Gal}(\overline{\mathbf{Q}}/\mathbf{Q}) \rightarrow GL_2(\mathbf{Q}_{691})$ be the $691$-adic representation attached to $\Delta$. Then $\sharp \mathcal{L}(\rho_{\Delta})=2$. 

\end{prp}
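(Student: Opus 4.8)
The plan is to combine a combinatorial description of $\mathcal L(\rho_\Delta)$ via the Bruhat--Tits tree with the arithmetic input that $691$ divides $B_{12}$ exactly once.

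\emph{Step 1 (the residual representation).} Write $\bar\chi$ for the reduction of $\chi_{\mathrm{cyc}}$ modulo $691$; since $\chi_{\mathrm{cyc}}=\kappa_{\mathrm{cyc}}\omega$ with $\kappa_{\mathrm{cyc}}\equiv 1$, we have $\bar\chi=\omega$. By the Ramanujan congruence $\tau(n)\equiv\sigma_{11}(n)\ (\mathrm{mod}\ 691)$ --- equivalently $691\mid B_{12}$, and in fact $B_{12}=-691/2730$ --- one has $\bar\rho_\Delta^{\mathrm{ss}}\cong\mathbf 1\oplus\omega^{11}$, the two constituents being distinct because $\omega$ has exact order $690$ and $\gcd(11,690)=1$. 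As $\rho_\Delta$ is irreducible, $\mathcal L(\rho_\Delta)$ is finite by Proposition~\ref{2.c}.

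\emph{Step 2 ($\mathcal L(\rho_\Delta)$ is a segment).} Identify $\mathcal L(\rho_\Delta)$ with the vertex set of a finite connected subtree $\mathscr T$ of the Bruhat--Tits tree of $\mathrm{PGL}_2(\mathbf Q_{691})$, two classes being joined by an edge when they admit representatives $T\supsetneq T'\supsetneq\varpi T$. The neighbours of a vertex $[T]$ correspond to the $G$-stable lines of $\bar\rho_T$; as $\bar\rho_T$ is an extension of $\mathbf 1$ and $\omega^{11}$ in one of the two orders, and is never scalar ($\bar\rho_\Delta^{\mathrm{ss}}$ being non-scalar), there is exactly one such line when $\bar\rho_T$ is non-split and exactly two when it is split. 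Hence no vertex of $\mathscr T$ has degree $>2$, so $\mathscr T$ is a segment $[T_0]-[T_1]-\cdots-[T_n]$ whose interior vertices are the split lattices and whose two endpoints are the non-split ones. Applying Ribet's lemma (Proposition~\ref{1.2}) with the two orderings of $(\mathbf 1,\omega^{11})$ produces two distinct non-split stable lattices, of opposite shapes $\left(\begin{smallmatrix}\mathbf 1&*\\0&\omega^{11}\end{smallmatrix}\right)$ and $\left(\begin{smallmatrix}\omega^{11}&*\\0&\mathbf 1\end{smallmatrix}\right)$; being non-split, both are endpoints of $\mathscr T$, hence they are its two endpoints. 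Thus $\sharp\mathcal L(\rho_\Delta)=n+1$ with $n\geq 1$, and it remains to show $n\leq 1$.

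\emph{Step 3 ($n\leq 1$), the heart of the matter.} Following Ribet, the non-split lattice $T_0$ of shape $\left(\begin{smallmatrix}\mathbf 1&*\\0&\omega^{11}\end{smallmatrix}\right)$ produces, from its reduction, a class in $H^1(\mathbf Q,\mathbf F_{691}(\omega^{1-12}))$ which is everywhere unramified --- outside $691$ because $\Delta$ has level $1$, and at $691$ by Ribet's local argument (using that $\Delta$ is $691$-ordinary and $1-12\not\equiv 0\ (\mathrm{mod}\ 690)$) --- so that Kummer theory exhibits a nonzero element of $\mathrm{Cl}(\mathbf Q(\mu_{691}))[691^\infty]^{\omega^{1-12}}$, as in Theorem~\ref{1.1}. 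The same mechanism applied to $\rho_\Delta$ modulo $\varpi^{n}$, using the $G$-stable subquotient cut out by the whole segment, shows that $691^{n}$ divides the order of $\mathrm{Cl}(\mathbf Q(\mu_{691}))[691^\infty]^{\omega^{1-12}}$. But this order equals $691^{\,v_{691}(B_{12})}=691$ by Herbrand--Ribet together with the Iwasawa main conjecture for $\mathbf Q$ (Mazur--Wiles, recalled in the introduction) --- alternatively, one may use that $691\,\|\,B_{12}$ makes the relevant Eisenstein congruence module $\mathbf Z_{691}/(691)$. Hence $n\leq 1$, and with Step~2 we conclude $n=1$, i.e. $\sharp\mathcal L(\rho_\Delta)=2$.

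The main obstacle is the penultimate assertion: converting a segment of length $n$ into a genuine factor $691^{n}$ of the order of $\mathrm{Cl}(\mathbf Q(\mu_{691}))[691^\infty]^{\omega^{1-12}}$. This requires keeping track of the extension classes of the successive reductions $\bar\rho_{T_0},\dots,\bar\rho_{T_n}$ and their mutual compatibility along the segment, so that they assemble into a single class of $\rho_\Delta$ modulo $\varpi^{n}$, together with a version of Ribet's local analysis --- uniform in $n$ --- guaranteeing that the resulting degree-$691^{n}$ extension of $\mathbf Q(\mu_{691})$ is unramified at every place. Everything else is either classical input on $\Delta$ and $B_{12}$ or formal combinatorics of the tree.
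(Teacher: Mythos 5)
Your Steps 1 and 2 are fine (finiteness, the segment structure, and the lower bound $\sharp\mathcal L(\rho_\Delta)\geq 2$ via Ribet's lemma all match the tools of Section 2). The genuine gap is exactly the step you yourself flag in Step 3, and it is not merely a technicality: as formulated, the reduction of the segment length $n$ to the class group of $\mathbf{Q}(\mu_{691})$ does not work. From the endpoint lattice one gets $c\equiv 0\ \mathrm{mod}\ \varpi^{n}$ and diagonal characters $\vartheta_1,\vartheta_2$ modulo $691^{n}$ which are powers of the mod-$691^{n}$ cyclotomic character, \emph{not} of $\omega$ alone; the cocycle $b\ \mathrm{mod}\ \varpi^{n}$ therefore cuts out an abelian $p$-extension of $\mathbf{Q}(\mu_{691^{n}})$ (or of $\mathbf{Q}(\mu_{691^{\infty}})$) on which $\mathrm{Gal}(\mathbf{Q}(\mu_{691^{n}})/\mathbf{Q})$ acts through a lift of $\omega^{-11}$ involving the wild part. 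For $n\geq 2$ this extension does not descend to an abelian extension of $\mathbf{Q}(\mu_{691})$, so you cannot conclude that $691^{n}$ divides $\sharp\,\mathrm{Cl}(\mathbf{Q}(\mu_{691}))[691^{\infty}]^{\omega^{1-12}}$. The object that actually bounds $n$ is a twisted coinvariant quotient of the Iwasawa module $X_\infty^{\omega^{-11}}$, whose order is controlled (via Mazur--Wiles) by the Kubota--Leopoldt value $L_p(1-12,\omega^{12})=(1-691^{11})\zeta(-11)$, of valuation exactly $v_{691}(B_{12})=1$ by the interpolation formula; this is precisely the mechanism of Proposition \ref{5} and Theorem \ref{yandong} in Section 3, specialized to weight $12$. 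Relatedly, your asserted identity $\sharp\,\mathrm{Cl}(\mathbf{Q}(\mu_{691}))[691^{\infty}]^{\omega^{1-12}}=691^{v_{691}(B_{12})}$ is not a correct citation of Herbrand--Ribet plus the main conjecture: the main conjecture ties this order to $B_{1,\omega^{11}}$ (the value at $s=0$), which is congruent to $B_{12}/12$ only modulo $691$, so exactness of the order does not follow from $691\,\|\,B_{12}$ without further input (e.g.\ the Iwasawa--Sims computation quoted in Section 4).

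Note also that the paper's own proof of this proposition is quite different and much more elementary: by Proposition \ref{2.8}, $\sharp\mathcal L(\rho_\Delta)=\mathrm{ord}_{691}I(\rho_\Delta)+1$, and Proposition \ref{2.9} identifies $I(\rho_\Delta)$ as the largest modulus of a congruence $\tau(l)\equiv l^{a}+l^{11-a}$; the Ramanujan congruence gives $\mathrm{ord}_{691}I(\rho_\Delta)\geq 1$, and Swinnerton-Dyer's explicit computation (\cite{SWin2}, p.~77) that no such congruence holds modulo $691^{2}$ gives the upper bound — no class field theory or main conjecture is needed. Your route (Ribet's lemma for the lower bound, Iwasawa-theoretic input for the upper bound) can be repaired by replacing the class group of $\mathbf{Q}(\mu_{691})$ with the correct Iwasawa-theoretic bound as above, but as written Step 3 both misidentifies the bounding object and leaves the uniform-in-$n$ unramifiedness and assembly argument unproved.
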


\begin{rem}
The work of Greenberg and Monsky is unpublished. See \cite[Section 12, Proposition 1]{Maz} for the statement. For the proof of more general settings,  see Proposition \ref{2.9} and the table after it. 
\end{rem}

A cusp form $f$ is called $p$-ordinary if its $p$-th Fourier coefficient $a\left(p, f \right)$ is a $p$-adic unit. Now we prepare some notations on Hida deformation. We fix a positive integer $N$ prime to $p$ and let $\chi$ be a primitive Dirichlet character modulo $Np$. If $\zeta \in \mu_{p^r} (r \geq 0)$ is a $p^r$-th root of unity, we denote by $\chi_{\zeta}$ the Dirichlet character as follows:
$$\chi_{\zeta} : \left(\mathbf{Z}/p^{r+1}\mathbf{Z} \right)^{\times} \rightarrow \overline{\mathbf{Q}}_p^{\times},\ u\ \mathrm{mod}\ p^{r+1} \mapsto \zeta.$$ Now let $\mathbb{I}$ be an integrally closed local domain which is finite flat over $\Lambda_{\chi}=\mathbf{Z}_p[\chi][[X]]$ and $\mathfrak{X}_{\mathbb{I}}$ the set of homomorphisms defined as follows:

$$\mathfrak{X}_{\mathbb{I}}=\Set{\varphi : \mathbb{I} \rightarrow \overline{\mathbf{Q}}_p | \varphi(1+X)=\zeta_{\varphi}u^{k_{\varphi}-2}, (k_{\varphi}, \zeta_{\varphi}) \in \mathbf{Z}_{\geq 2} \times \mu_{p^{\infty}} }.$$

Let $\mathscr{F}=\displaystyle \sum^{\infty}_{n=1}c(n, \mathscr{F})q^n \in \mathbb{I}[[q]]$ be an $\mathbb{I}$-adic cusp form (resp. $\mathbb{I}$-adic normalized Hecke eigen cusp form) with character $\chi$. That is,

$$f_{\varphi}:=\displaystyle \sum^{\infty}_{n=1}\varphi(c(n, \mathscr{F}))q^n \in S_{k_{\varphi}}\left(\Gamma_0(Np^{r_{\varphi}+1}), \chi_{\zeta_{\varphi}}\chi\omega^{1-k_{\varphi}}, \varphi\left(\mathbb{I} \right) \right)$$ is a $p$-ordinary cusp form (resp. $p$-ordinary normalized Hecke eigen cusp form) for all $\varphi \in \mathfrak{X}_{\mathbb{I}}$, where $\zeta_{\varphi}$ is a primitive $p^{r_{\varphi}}$-th root of unity. We denote by $S^{\mathrm{ord}}(\chi, \mathbb{I})$ the space of $\mathbb{I}$-adic forms with character $\chi$. Let $\mathbf{T}(\chi, \Lambda_{\chi})$ the ring generated over $\Lambda_{\chi}$ by all Hecke operators $T(l)$ for all primes $l$. Then $\mathbf{T}(\chi, \mathbb{I})=\mathbf{T}(\chi, \Lambda_{\chi}) \otimes_{\Lambda_{\chi}} \mathbb{I}$ acts on the space $S^{\mathrm{ord}}(\chi, \mathbb{I})$.

Let $\mathscr{F}$ be an $\mathbb{I}$-adic normalized Hecke eigen cusp form and $\mathrm{Frac}(\mathbb{I})$ the field of fraction of $\mathbb{I}$. Hida \cite{H2} proved that there is a continuous representation $$\rho_{\mathscr{F}} : \mathrm{Gal}(\overline{\mathbf{Q}}/\mathbf{Q}) \rightarrow \mathrm{GL}_2(\mathrm{Frac}(\mathbb{I}))$$ such that for any $\varphi \in \mathfrak{X}_{\mathbb{I}}$, the residual representation $\rho_{\mathscr{F}}\left(\mathrm{Ker}\varphi \right)$ (see Definition \ref{residual rep}) is isomorphic to $\rho_{f_{\varphi}}$.

From now on throughout the paper, we denote by $\phi$ the Euler function and we fix a positive integer $N$ prime to $p$. Let $\chi$ be a primitive Dirichlet character modulo $Np$. Let $\mathbb{I}$ be the same as above with $\mathfrak{m}$ the maximal ideal of $\mathbb{I}$. We denote by $S^{\mathrm{ord}}(\chi, \mathbb{I}), \mathbf{T}(\chi, \mathbb{I})$ the same as above. Let $\mathscr{F}$ be an $\mathbb{I}$-adic normalized Hecke eigen cusp form. Now we are going to determine $\sharp \mathcal{L}(\rho_{f_{\varphi}})$ when $\varphi$ varies in $\mathfrak{X}_{\mathbb{I}}$. Our result is the following theorem:

\begin{thm}\label{yandong}
Suppose $p \nmid \phi(N)$ and $\rho_{\mathscr{F}}\left(\mathfrak{m} \right) \cong \psi_1\oplus\psi_2$ such that $\psi_1$ (resp. $\psi_2$) is unramified (resp. ramified) at $p$. Assume the following condition:

\begin{list}{}{}
\item[(D)] There exist Dirichlet characters $\chi_1, \chi_2$ with relative prime conductors such that $\chi_1\chi_2=\chi, \chi_1\neq\chi_2\omega$ and $\overline{\chi}_i=\psi_i (i=1, 2).$
\end{list}
We enlarge $\mathbb{I}$ such that $\mathbb{I}$ is also finite flat over $\Lambda_{\chi_1\chi_2^{-1}}$. Then we have the following statements:

\begin{list}{}{}
\item[(1)]For any $\varphi \in \mathfrak{X}_{\mathbb{I}}$ such that $\varphi(1+X)=\zeta_{\varphi}u^{k_{\varphi}-2}$, we have $$\sharp \mathcal{L}(\rho_{f_{\varphi}}) \leq \mathrm{ord}_{\varpi_{\varphi}}(L_p(1-k_{\varphi}, \chi_{\zeta_{\varphi}}\chi_1^{-1}\chi_2\omega))+1,$$ where $\varpi_{\varphi}$ is a fixed uniformizer of $\varphi(\mathbb{I})$ and $L_p(s, \chi_{\zeta_{\varphi}}\chi_1^{-1}\chi_2\omega)$ is the Kubota-Leopoldt $p$-adic $L$-function. 

\item[(2)] Assume that 

\begin{list}{}{}
\item[(R)]$N=1$ and $\mathbf{T}(\chi, \Lambda_{\chi})$ is isomorphic to $\Lambda_{\chi}$. 
\end{list}

Then for any $\varphi \in \mathfrak{X}_{\mathbb{I}}$ such that $\varphi(1+X)=\zeta_{\varphi}u^{k_{\varphi}-2}$, we have $$\sharp \mathcal{L}(\rho_{f_{\varphi}})=\mathrm{ord}_{\varpi_{\varphi}}(L_p(1-k_{\varphi}, \chi_{\zeta_{\varphi}}\chi\omega))+1.$$

\item[(3)]Let $L_{\infty}, L_{\infty}\left(Np \right)$ be the maximal unramified abelian $p$-extension of $\mathbf{Q}\left(\mu_{Np^{\infty}} \right)$ and the maximal abelian $p$-extension unramified outside $Np$ of $\mathbf{Q}\left(\mu_{Np^{\infty}} \right)$. We denote by $X_{\infty}=\mathrm{Gal}\left(L_{\infty}/\mathbf{Q}\left(\mu_{Np^{\infty}} \right) \right)$ and by $Y_{\infty}=\mathrm{Gal}\left(L_{\infty}\left(Np \right)/\mathbf{Q}\left(\mu_{Np^{\infty}} \right) \right)$ on which $\Delta_{Np}=\mathrm{Gal}\left(\mathbf{Q}\left(\mu_{Np^{\infty}} \right)/\mathbf{Q}_{\infty} \right)$ acts by conjugation. Assume the following conditions:
\begin{list}{}{}

\item[(C)] The $\Lambda_{\chi_1\chi_2^{-1}}$-modules $X_{\infty}^{\chi_1\chi_2^{-1}}$ and $Y_{\infty}^{\chi_1^{-1}\chi_2}$ are cyclic. 
\item[(P)] The ideal generated by the Iwasawa power series (see Section 2.3 below) $\hat{G}_{\chi_1^{-1}\chi_2}(X)\mathbb{I}$ is a prime ideal in $\mathbb{I}$. 
\end{list}

Then for any $\varphi \in \mathfrak{X}_{\mathbb{I}}$ such that $\varphi(1+X)=\zeta_{\varphi}u^{k_{\varphi}-2}$, we have

$$\sharp \mathcal{L}(\rho_{f_{\varphi}})=\mathrm{ord}_{\varpi_{\varphi}}(L_p(1-k_{\varphi}, \chi_{\zeta_{\varphi}}\chi_1^{-1}\chi_2\omega))+1.$$

\end{list}

\end{thm}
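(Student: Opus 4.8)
The plan is to reduce all three statements to a study of stable lattices in the residual representation $\rho_{f_\varphi}$ via Ribet's lemma (Proposition~\ref{1.2}) and then to translate the count of isomorphism classes into the $\varpi_\varphi$-adic valuation of a congruence module, which in turn is controlled by a $p$-adic $L$-value through Iwasawa theory. First I would fix $\varphi\in\mathfrak{X}_{\mathbb{I}}$ and observe that, by condition (D), the residual representation $\overline{\rho}_{f_\varphi}^{\mathrm{ss}}$ decomposes as $\overline{\chi_1\chi_{\zeta_\varphi}\omega^{?}}\oplus\overline{\chi_2\omega^{?}}$ in the appropriate twist (matching $\psi_1\oplus\psi_2$ with $\psi_1$ unramified and $\psi_2$ ramified at $p$); the hypothesis $p\nmid\phi(N)$ guarantees that $\psi_1\neq\psi_2$, so the two characters are distinct and Ribet's lemma applies. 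The key structural input is a classification of stable lattices in a reducible-but-indecomposable two-dimensional representation: the isomorphism classes of stable lattices $T$ with $\overline{\rho}_T$ of the form $\begin{pmatrix}\psi_1&*\\0&\psi_2\end{pmatrix}$ are parametrized by a chain, and their number equals $1$ plus the length of a certain cyclic module measuring the "depth" of the extension — this is exactly the content invoked in Proposition~\ref{1.4} and its generalization Proposition~\ref{2.9} (the table after it). I would isolate this as the statement that $\sharp\mathcal{L}(\rho_{f_\varphi})=1+\mathrm{ord}_{\varpi_\varphi}(\eta_\varphi)$, where $\eta_\varphi$ is the congruence number between $f_\varphi$ and the relevant Eisenstein series.

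For part (1), the upper bound comes from one direction of this classification together with the fact that the congruence ideal divides (or is controlled by) the Eisenstein ideal: by Hida theory the Eisenstein component of $\mathbf{T}(\chi,\mathbb{I})$ is cut out by an $\mathbb{I}$-adic $L$-function interpolating the $L_p(1-k_\varphi,\chi_{\zeta_\varphi}\chi_1^{-1}\chi_2\omega)$, and specializing at $\varphi$ shows $\mathrm{ord}_{\varpi_\varphi}(\eta_\varphi)\le\mathrm{ord}_{\varpi_\varphi}(L_p(1-k_\varphi,\chi_{\zeta_\varphi}\chi_1^{-1}\chi_2\omega))$. This inequality can fail to be an equality when the Hecke algebra is larger than $\Lambda_\chi$, because then other eigenforms may absorb part of the congruence. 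For part (2), condition (R) forces $\mathbf{T}(\chi,\Lambda_\chi)\cong\Lambda_\chi$, so the cuspidal Hecke algebra is as small as possible; by Mazur's principle / the Eisenstein-ideal argument (as in Ohta's and Wiles' work) the congruence ideal is then exactly the $p$-adic $L$-function ideal, and specializing gives the equality. For part (3), instead of appealing to (R) one uses the cyclicity conditions (C) on $X_\infty^{\chi_1\chi_2^{-1}}$ and $Y_\infty^{\chi_1^{-1}\chi_2}$ together with the primality condition (P) on $\hat G_{\chi_1^{-1}\chi_2}(X)\mathbb{I}$: the Iwasawa main conjecture identifies the characteristic ideal of the relevant Selmer / class-group module with the Iwasawa power series $\hat G_{\chi_1^{-1}\chi_2}(X)$, condition (C) upgrades "characteristic ideal" to "Fitting ideal" (cyclicity kills the ambiguity), and condition (P) ensures that after extending $\mathbb{I}$ the specialization at $\varphi$ behaves well (no unexpected drop in length), yielding again $\sharp\mathcal{L}(\rho_{f_\varphi})=1+\mathrm{ord}_{\varpi_\varphi}(L_p(1-k_\varphi,\chi_{\zeta_\varphi}\chi_1^{-1}\chi_2\omega))$.

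The main obstacle I anticipate is the passage from the $\mathbb{I}$-adic (or $\Lambda$-adic) picture to the specialization at a single $\varphi$: one must control the interaction between the Eisenstein ideal, the congruence module, and the $p$-adic $L$-function after reducing modulo $\mathrm{Ker}\,\varphi$, and show that the relevant lengths are additive/exact under this specialization. Concretely, one needs (a) that the congruence module of $\mathscr{F}$ specializes to the congruence module of $f_\varphi$ without loss — this is where $\mathbb{I}$ being integrally closed and the control theorems of Hida enter — and (b) that the cyclicity (C) of the Iwasawa modules is inherited after the enlargement of $\mathbb{I}$ and the base change to $\Lambda_{\chi_1\chi_2^{-1}}$, which is precisely what condition (P) is designed to guarantee (a prime specialization cannot destroy cyclicity). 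A secondary subtlety is bookkeeping the Teichmüller and $\chi_\zeta$ twists so that the $L$-value argument of $L_p$ matches $1-k_\varphi$ with the correct character $\chi_{\zeta_\varphi}\chi_1^{-1}\chi_2\omega$; I would handle this by carefully normalizing $\rho_{\mathscr{F}}$ and its residual specializations at the outset, so that the reduction to Ribet's lemma and then to Iwasawa theory is uniform across all three parts.
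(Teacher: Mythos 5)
Your overall architecture matches the paper's (count lattices by the depth of reducibility, then control that depth by the Kubota--Leopoldt $p$-adic $L$-function through the main conjecture), but the step you treat as already known is exactly where the substance lies, and as stated your reduction gives an inequality in the wrong direction for part (1). What is formal (Lemma \ref{2.6} and Proposition \ref{2.8}) is that $\sharp\mathcal{L}(\rho_{f_{\varphi}})=1+\mathrm{ord}_{\varpi_{\varphi}}I(\rho_{f_{\varphi}})$ and that the Bella\"iche--Chenevier reducibility ideal is \emph{contained} in the trace (Eisenstein) congruence ideal; this only yields $\sharp\mathcal{L}(\rho_{f_{\varphi}})\geq 1+\mathrm{ord}_{\varpi_{\varphi}}(\eta_{\varphi})$, whereas (1) needs the opposite containment, i.e.\ $\hat{G}_{\chi_1^{-1}\chi_2}(X)\mathbb{I}\subset I(\rho_{\mathscr{F}})$. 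In the paper that containment is the main work: one uses the ordinary local shape at $p$ (Theorem \ref{w}) to show $I(\rho_{\mathscr{F}})$ equals the ideal generated by $a(g)-\chi_1(g)$ (Lemma \ref{4}) --- and this is precisely where $p\nmid\phi(N)$ enters, via class field theory forcing the diagonal character mod $I(\rho_{\mathscr{F}})$ to be $\chi_1$ itself, not merely where $\psi_1\neq\psi_2$ comes from (that follows from the ramified/unramified hypothesis) --- and then one runs the Ribet--Wiles construction: the $b$-entries cut out an everywhere-unramified abelian extension of $\mathbf{Q}(\mu_{Np^{\infty}})$, so $B/I(\rho_{\mathscr{F}})B$ is a quotient of $X_{\infty}^{\chi_1\chi_2^{-1}}\otimes\mathbb{I}$, and the Mazur--Wiles main conjecture identifies its Fitting ideal with $(\hat{G}_{\chi_1^{-1}\chi_2}(X))$ (Proposition \ref{5}(1)). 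Your sketch invokes none of this; asserting "$\sharp\mathcal{L}(\rho_{f_{\varphi}})=1+\mathrm{ord}(\eta_{\varphi})$" as a known classification begs the question.

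Two further points. The specialization problem you flag as obstacle (a) is genuinely needed but does not require Hida control theorems or integral closedness in the way you suggest: the paper's Lemma \ref{1} handles it by localizing $\rho_{\mathscr{F}}$ at $\mathcal{P}=\mathrm{Ker}\,\varphi$, normalizing so the image lies in $\mathrm{GL}_2(\mathbb{I}_{\mathcal{P}})$, identifying the specialized representation with $\rho_{f_{\varphi}}$ by Chebotarev, and concluding $\varphi(I(\rho_{\mathscr{F}}))=I(\rho_{f_{\varphi}})$; you should supply some such argument rather than defer it. Finally, your reading of (C) and (P) in part (3) is off: (C) is used to make $B$ and $C$ (hence $I(\rho_{\mathscr{F}})=BC$) principal via Nakayama from the surjections $X_{\infty}^{\chi_1\chi_2^{-1}}\otimes\mathbb{I}\twoheadrightarrow B/I(\rho_{\mathscr{F}})B$ and $Y_{\infty}^{\chi_1^{-1}\chi_2}\otimes\mathbb{I}\twoheadrightarrow C/I(\rho_{\mathscr{F}})C$, and (P) is not about "cyclicity surviving specialization" but about promoting the containment $\hat{G}_{\chi_1^{-1}\chi_2}(X)\mathbb{I}\subset I(\rho_{\mathscr{F}})$, now principal, to an equality because $\hat{G}_{\chi_1^{-1}\chi_2}(X)\mathbb{I}$ is prime (Corollary \ref{r}); similarly, in (2) the equality comes from Wiles' isomorphism $\mathbb{I}/\hat{G}_{\chi}(X)\mathbb{I}\cong\mathbf{T}(\chi,\mathbb{I})/(I(\chi,\mathbb{I}),\hat{G}_{\chi}(X))$ combined with Proposition \ref{5}, not from a bare appeal to "Mazur's principle."
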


Theorem \ref{yandong} will be proved at the end of Section 3.2. Now we discuss the boundedness of $\sharp\mathcal{L}\left(\rho_{f_{\varphi}} \right)$ when the weight and the level vary. When we fix an $r \in \mathbf{Z}_{\geq 0}$, we define $\mathfrak{X}^{(r)}_{\mathbb{I}}:$

$$\mathfrak{X}^{(r)}_{\mathbb{I}}=\Set{\varphi \in \mathfrak{X}_{\mathbb{I}} | \varphi(1+X)=\zeta_{\varphi}u^{k_{\varphi}-2}, (k_{\varphi}, \zeta_{\varphi}) \in \mathbf{Z}_{\geq 2} \times \left(\mu_{p^{\infty}} \setminus \mu_{p^r} \right) }.$$

When we fix a $\zeta \in \mu_{p^{\infty}}$, we define $\mathfrak{X}_{\mathbb{I}, \zeta}:$ $$\mathfrak{X}_{\mathbb{I}, \zeta}=\Set{ \varphi \in \mathfrak{X}_{\mathbb{I}} | \varphi(1+X)=\zeta u^{k_{\varphi}-2}, k_{\varphi} \geq 2 }.$$

When we fix a $k \in \mathbf{Z}_{\geq 2}$, we define $\mathfrak{X}_{\mathbb{I}, k}:$

$$\mathfrak{X}_{\mathbb{I}, k}=\Set{ \varphi \in \mathfrak{X}_{\mathbb{I}} | \varphi(1+X)=\zeta_{\varphi}u^{k-2}, \zeta_{\varphi} \in \mu_{p^{\infty}} }.$$

\begin{cor}\label{yandong2}
Let the assumptions and the notations be as in Theorem \ref{yandong}. We denote by $\hat{G}_{\chi_1^{-1}\chi_2}^{*}(X)$ the distinguished polynomial associated to $\hat{G}_{\chi_1^{-1}\chi_2}(X)$. Then we have the following statements:

\begin{list}{}{}
\item[(1)]There exists an integer $r \in \mathbf{Z}_{\geq 0}$ such that $$\sharp\mathcal{L}(\rho_{f_{\varphi}}) \leq \mathrm{rank}_{\Lambda_{\chi}}\mathbb{I}\cdot\mathrm{deg}\hat{G}_{\chi_1^{-1}\chi_2}^{*}(X) +1$$ is bounded when $\varphi$ varies in $\mathfrak{X}^{(r)}_{\mathbb{I}}$, where $\mathrm{rank}_{\Lambda_{\chi}}\mathbb{I}$ is the rank of the $\Lambda_{\chi}$-module $\mathbb{I}$.
\item[(2)]For each integer $k \geq 2$, $\sharp \mathcal{L}(\rho_{f_{\varphi}})$ is bounded when $\varphi$ varies in $\mathfrak{X}_{\mathbb{I}, k}$.

\item[(3)] Suppose that $\mathbb{I}$ is isomorphic to $\mathcal{O}[[X]]$ with $\mathcal{O}$ the ring of integers of a finite extension of $\mathbf{Q}_p$. Then there exists an integer $r^{\prime} \in \mathbf{Z}_{\geq 0}$ such that $\sharp\mathcal{L}(\rho_{f_{\varphi}})$ is constant when $\varphi$ varies in $\mathfrak{X}^{(r^{\prime})}_{\mathbb{I}}$. 
\item[(4)] Assume the condition (R) or both of the conditions (C) and (P). For each $\zeta \in \mu_{p^{\infty}}$, $\sharp\mathcal{L}(\rho_{f_{\varphi}})$ is unbounded when $\varphi$ varies in $\mathfrak{X}_{\mathbb{I}, \zeta}$ if and only if $L_p(1-s, \chi_{\zeta}\chi_1^{-1}\chi_2\omega)$ has a zero in $\mathbf{Z}_p$.

\end{list}

\end{cor}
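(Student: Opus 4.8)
The plan is to reduce everything to the behaviour of the Iwasawa power series $\hat{G}_{\chi_1^{-1}\chi_2}(X)$ under the specialisation maps $\varphi$. First I would use the identity, implicit in Section~2.3 and in the proof of Theorem~\ref{yandong}, that for every $\varphi\in\mathfrak{X}_{\mathbb{I}}$
$$\mathrm{ord}_{\varpi_\varphi}\bigl(L_p(1-k_\varphi,\chi_{\zeta_\varphi}\chi_1^{-1}\chi_2\omega)\bigr)=\mathrm{ord}_{\varpi_\varphi}\bigl(\varphi(\hat{G}_{\chi_1^{-1}\chi_2}(X))\bigr),$$
where $\varphi$ carries the relevant variable to $\zeta_\varphi u^{k_\varphi-2}-1$ (up to a fixed substitution $1+X\mapsto(1+X)u^{c}$, harmless below). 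Next, by the theorem of Ferrero--Washington the $\mu$-invariant of $\hat{G}_{\chi_1^{-1}\chi_2}(X)$ vanishes, so $\hat{G}_{\chi_1^{-1}\chi_2}(X)=U(X)\prod_{j=1}^{\lambda}(X-\alpha_j)$ with $U$ a unit, $\lambda=\deg\hat{G}^{*}_{\chi_1^{-1}\chi_2}(X)$ and $v_p(\alpha_j)>0$; since $\varphi(X)$ lies in the maximal ideal, $\varphi(U)$ is a unit and $\mathrm{ord}_{\varpi_\varphi}(\varphi(\hat{G}_{\chi_1^{-1}\chi_2}(X)))=\sum_{j=1}^{\lambda}\mathrm{ord}_{\varpi_\varphi}(\varphi(X)-\alpha_j)$. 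Finally I would record the ramification estimate: if $\zeta_\varphi$ has order $p^{s_\varphi}$ then $\mathbf{Q}_p(\chi,\zeta_\varphi)/\mathbf{Q}_p$ is totally ramified of degree $p^{s_\varphi-1}(p-1)$ (as $\mathbf{Q}_p(\chi)/\mathbf{Q}_p$ is unramified), and $\varphi(\mathbb{I})$ has degree at most $\mathrm{rank}_{\Lambda_\chi}\mathbb{I}$ over $\mathrm{Frac}(\mathbf{Z}_p[\chi][\zeta_\varphi])$, whence $p^{s_\varphi-1}(p-1)\le e_\varphi\le\mathrm{rank}_{\Lambda_\chi}\mathbb{I}\cdot p^{s_\varphi-1}(p-1)$ for $e_\varphi:=\mathrm{ord}_{\varpi_\varphi}(p)$, and $\mathrm{ord}_{\varpi_\varphi}=e_\varphi v_p$ on $\overline{\mathbf{Q}}_p$.

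For (1), I would pick $r$ with $p^{-r}(p-1)^{-1}<\min_j v_p(\alpha_j)$. Then for $\varphi\in\mathfrak{X}^{(r)}_{\mathbb{I}}$ we have $s_\varphi>r$, so $v_p(\varphi(X))=v_p(\zeta_\varphi-1)=p^{-(s_\varphi-1)}(p-1)^{-1}<v_p(\alpha_j)$ for all $j$ (because $v_p(u^{k_\varphi-2}-1)\ge1$), hence $v_p(\varphi(X)-\alpha_j)=v_p(\varphi(X))$ and $\mathrm{ord}_{\varpi_\varphi}(\varphi(\hat{G}_{\chi_1^{-1}\chi_2}(X)))=e_\varphi\lambda p^{-(s_\varphi-1)}(p-1)^{-1}\le\mathrm{rank}_{\Lambda_\chi}\mathbb{I}\cdot\lambda$; Theorem~\ref{yandong}(1) then finishes it. For (2), I would fix $k$ and write $\varphi(X)-\alpha_j=u^{k-2}(\zeta_\varphi-\gamma_j)$ with $\gamma_j=(1+\alpha_j)u^{-(k-2)}\equiv1\pmod{\mathfrak{m}}$ fixed, and run a short case analysis on $v_p(\zeta_\varphi-\gamma_j)$ — comparing it with the fixed value $v_p(\gamma_j-1)$ when $\gamma_j\notin\mu_{p^\infty}$, and comparing orders of roots of unity when $\gamma_j\in\mu_{p^\infty}$, using that $\mu_{p^\infty}$ is closed in $\mathbf{C}_p$ — to conclude that $\mathrm{ord}_{\varpi_\varphi}(\zeta_\varphi-\gamma_j)\le\mathrm{rank}_{\Lambda_\chi}\mathbb{I}$ for all but finitely many $\varphi\in\mathfrak{X}_{\mathbb{I},k}$; on the remaining finite set $\sharp\mathcal{L}(\rho_{f_\varphi})$ is in any case finite. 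Summing over $j$ and invoking Theorem~\ref{yandong}(1) yields the boundedness.

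For (3), when $\mathbb{I}\cong\mathcal{O}[[X]]$ one has $\varphi(\mathbb{I})=\mathcal{O}[\zeta_\varphi]$, and there is an $r'$ such that each step $\mathcal{O}[\zeta_{p^{s+1}}]/\mathcal{O}[\zeta_{p^{s}}]$ with $s>r'$ is totally ramified of degree $p$; hence $e_\varphi=c_0 p^{\,s_\varphi-1}$ for a fixed $c_0$ with $(p-1)\mid c_0$ once $\varphi\in\mathfrak{X}^{(r')}_{\mathbb{I}}$, and the computation of (1) then gives $\mathrm{ord}_{\varpi_\varphi}(L_p(1-k_\varphi,\cdots))=\lambda c_0/(p-1)=:n_0$, a constant. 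To upgrade this to constancy of $\sharp\mathcal{L}(\rho_{f_\varphi})$ itself I would appeal to Proposition~\ref{2.9}: as $\mathbb{I}\cong\mathcal{O}[[X]]$ is regular (so $\mathbf{T}(\chi,\mathbb{I})$ is Gorenstein), one expects $\sharp\mathcal{L}(\rho_{f_\varphi})=\mathrm{ord}_{\varpi_\varphi}(\varphi(\eta))+1$ for the congruence element $\eta\in\mathbb{I}$ of $\mathscr{F}$; since $\eta$ divides $\hat{G}_{\chi_1^{-1}\chi_2}(X)$ it again has vanishing $\mu$-invariant, so the stabilisation argument above, applied to $\eta$ in place of $\hat{G}_{\chi_1^{-1}\chi_2}(X)$, shows $\mathrm{ord}_{\varpi_\varphi}(\varphi(\eta))$ is constant on $\mathfrak{X}^{(r'')}_{\mathbb{I}}$ for $r''$ large. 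This last step — obtaining an \emph{equality} rather than the inequality of Theorem~\ref{yandong}(1) in the regular case — is the point I expect to require the most care.

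For (4), I would fix $\zeta\in\mu_{p^\infty}$, observe that $e_\varphi$ is bounded on $\mathfrak{X}_{\mathbb{I},\zeta}$ (say by $E$, depending only on $\mathrm{rank}_{\Lambda_\chi}\mathbb{I}$ and the order of $\zeta$), and use the equality $\sharp\mathcal{L}(\rho_{f_\varphi})=\sum_j\mathrm{ord}_{\varpi_\varphi}(\zeta u^{k_\varphi-2}-1-\alpha_j)+1$ given by Theorem~\ref{yandong}(2) or (3) under assumption (R), or under both (C) and (P). Since $k\mapsto\zeta u^{k-2}-1$ extends continuously to $\mathbf{Z}_p$ with image the closed set $\zeta(1+p\mathbf{Z}_p)-1$ and $\mathbf{Z}_{\ge2}$ is dense in $\mathbf{Z}_p$, the term $v_p(\zeta u^{k_\varphi-2}-1-\alpha_j)$ is unbounded over $\mathfrak{X}_{\mathbb{I},\zeta}$ exactly when $\alpha_j\in\zeta(1+p\mathbf{Z}_p)-1$, i.e.\ when $\zeta u^{s-2}-1=\alpha_j$ for some $s\in\mathbf{Z}_p$, i.e.\ when $L_p(1-s,\chi_\zeta\chi_1^{-1}\chi_2\omega)=\hat{G}_{\chi_1^{-1}\chi_2}(\zeta u^{s-2}-1)$ vanishes at some $s\in\mathbf{Z}_p$. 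If no such $j$ exists, the positive distances $\mathrm{dist}(1+\alpha_j,\zeta(1+p\mathbf{Z}_p))$ bound the individual terms, and $e_\varphi\le E$ then makes $\sharp\mathcal{L}(\rho_{f_\varphi})$ bounded; if some $j_0$ works, choosing integers $k_n\ge2$ approximating the relevant $s$ forces $\mathrm{ord}_{\varpi_{\varphi_n}}(\zeta u^{k_n-2}-1-\alpha_{j_0})\to\infty$ (since $e_{\varphi_n}\ge1$), hence $\sharp\mathcal{L}(\rho_{f_{\varphi_n}})\to\infty$. This gives the stated equivalence.
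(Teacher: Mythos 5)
Parts (1), (2) and (4) of your proposal are essentially correct and follow the same route as the paper: the identity $L_p(1-k_{\varphi},\chi_{\zeta_{\varphi}}\chi_1^{-1}\chi_2\omega)=\hat{G}_{\chi_1^{-1}\chi_2}(\zeta_{\varphi}u^{k_{\varphi}-2}-1)$, Ferrero--Washington plus Weierstrass preparation, the valuation computation $v_p(\zeta_{\varphi}u^{k_{\varphi}-2}-1-\alpha_j)=1/((p-1)p^{r_{\varphi}-1})$ for $\zeta_{\varphi}$ of large order, and the ramification bound $e_{\varphi}\leq \mathrm{rank}_{\Lambda_{\chi}}\mathbb{I}\cdot (p-1)p^{r_{\varphi}-1}$ are exactly the paper's ingredients. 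For (2) the paper simply notes that $\mathfrak{X}_{\mathbb{I},k}\setminus\mathfrak{X}^{(r)}_{\mathbb{I}}$ is finite and deduces (2) from (1); your case analysis on $v_p(\zeta_{\varphi}-\gamma_j)$ is a valid but longer substitute. Your (4) is the paper's compactness/continuity argument rephrased through the roots $\alpha_j$, and it goes through.

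The genuine gap is in (3). Since (3) assumes neither (R) nor (C)$+$(P), Theorem \ref{yandong}(1) gives only an upper bound, and constancy of that bound says nothing about constancy of $\sharp\mathcal{L}(\rho_{f_{\varphi}})$; you acknowledge this and propose the formula $\sharp\mathcal{L}(\rho_{f_{\varphi}})=\mathrm{ord}_{\varpi_{\varphi}}(\varphi(\eta))+1$ for a ``congruence element'' $\eta$, justified only by regularity of $\mathbb{I}$ and an expected Gorenstein property of $\mathbf{T}(\chi,\mathbb{I})$. This step is not established (Proposition \ref{2.9} concerns $\rho_{\Delta}$ and is irrelevant here), and it implicitly assumes $I(\rho_{\mathscr{F}})$ is principal, which is exactly what is unavailable without (C) or (R). The paper avoids this entirely: Lemma \ref{1} gives unconditionally $\sharp\mathcal{L}(\rho_{f_{\varphi}})=\mathrm{ord}_{\varpi_{\varphi}}(\varphi(I(\rho_{\mathscr{F}})))+1$, and by Lemma \ref{4} the ideal $I(\rho_{\mathscr{F}})\subset\mathcal{O}[[X]]$ has finitely many generators $f_i=\varpi^{\mu_i}P_i(X)U_i(X)$; since $\hat{G}_{\chi_1^{-1}\chi_2}(X)\mathbb{I}\subset I(\rho_{\mathscr{F}})$ and $\varpi\nmid\hat{G}_{\chi_1^{-1}\chi_2}(X)$, some $\mu_i=0$, and for $r_{\varphi}$ large one has $\mathrm{ord}_{\varpi_{\varphi}}\varphi(f_i)=e(p-1)p^{r_{\varphi}-1}\mu_i\,\mathrm{ord}_p\varpi+e\,\mathrm{deg}P_i(X)$, so the minimum over $i$ stabilizes at $e\cdot\min\{\mathrm{deg}P_i(X)\mid \mu_i=0\}$, giving constancy with no principality or Hecke-algebra input. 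A secondary unjustified point in your (3): $\varphi(\mathbb{I})=\mathcal{O}[\zeta_{\varphi}]$ need not hold (the image is $\mathcal{O}[\varphi(X^{\prime})]$ for the variable $X^{\prime}$ of $\mathcal{O}[[X^{\prime}]]$, possibly strictly larger); what is needed, and what the paper proves, is only that the ramification index of $\mathrm{Frac}(\varphi(\mathbb{I}))/\mathbf{Q}_p$ equals $e(p-1)p^{r_{\varphi}-1}$ for a fixed $e$ once $r_{\varphi}$ is large. To repair your (3), replace the congruence-element step by Lemma \ref{1} and Lemma \ref{4} and run the multi-generator argument above.
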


Corollary \ref{yandong2} will be proved in Section 3.3. Let $\mathcal{L}(\rho_{\mathscr{F}})$ be the set of the isomorphic classes of stable lattices of Hida deformation $\rho_{\mathscr{F}}$. Now we give a result of $\sharp\mathcal{L}(\rho_{\mathscr{F}})$ answering Question 4.5 1 of \cite{Ochiai08}.

\begin{cor}\label{yandong3}
Let the assumptions and the notations be as in Theorem \ref{yandong}. Assume the conditions (D), (C) and (P). Further assume the following condition
\begin{list}{}{}
\item[(F)] There exists a stable lattice $\mathcal{T}$ which is free over $\mathbb{I}$.

\end{list}
Suppose that there exists a $\zeta \in \mu_{p^{\infty}}$ such that $L_p(1-s, \chi_{\zeta}\chi_1^{-1}\chi_2\omega)$ has a zero in $\mathbf{Z}_p$. Then $\sharp\mathcal{L}(\rho_{\mathscr{F}})=\infty$. 
\end{cor}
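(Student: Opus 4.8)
The plan is to use (F) to transfer information from the specializations $\rho_{f_{\varphi}}$ back to $\rho_{\mathscr{F}}$: for each arithmetic $\varphi$ I would lift all $\sharp\mathcal{L}(\rho_{f_{\varphi}})$ stable lattices of $\rho_{f_{\varphi}}$ to pairwise non-isomorphic stable lattices of $\rho_{\mathscr{F}}$, and then invoke the unboundedness of $\sharp\mathcal{L}(\rho_{f_{\varphi}})$ supplied by Corollary \ref{yandong2}(4). First I would fix a stable lattice $\mathcal{T}\cong\mathbb{I}^{2}$ as in (F). Since one of $\psi_{1},\psi_{2}$ is unramified and the other ramified at $p$ we have $\psi_{1}\neq\psi_{2}$, so (the semisimplification of $\mathcal{T}/\mathfrak{m}\mathcal{T}$ being $\psi_{1}\oplus\psi_{2}$) the reduction $\mathcal{T}/\mathfrak{m}\mathcal{T}$ has a $G_{\mathbf{Q}}$-stable line, and I may choose an $\mathbb{I}$-basis $\{e_{1},e_{2}\}$ of $\mathcal{T}$ whose reduction is adapted to it; thus, writing $\rho_{\mathscr{F}}(g)=\left(\begin{smallmatrix}a_{g}&b_{g}\\ c_{g}&d_{g}\end{smallmatrix}\right)$ in this basis, $c_{g}\in\mathfrak{m}$ for all $g$. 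Let $\mathfrak{c}\subseteq\mathbb{I}$ be the ideal generated by the $c_{g}$. By the computation used to prove Theorem \ref{yandong} — this is exactly where conditions (C) and (P) enter — for every $\varphi\in\mathfrak{X}_{\mathbb{I}}$ one has $\varphi(\mathfrak{c})=(\varpi_{\varphi}^{\,n_{\varphi}})$ in $\varphi(\mathbb{I})$, with $n_{\varphi}:=\mathrm{ord}_{\varpi_{\varphi}}L_{p}(1-k_{\varphi},\chi_{\zeta_{\varphi}}\chi_{1}^{-1}\chi_{2}\omega)$, and the $n_{\varphi}+1$ elements of $\mathcal{L}(\rho_{f_{\varphi}})$ are the classes of the stable $\varphi(\mathbb{I})$-lattices $\ell_{\varphi}^{(i)}:=\varphi(\mathbb{I})\bar{e}_{1}+\varpi_{\varphi}^{\,i}\varphi(\mathbb{I})\bar{e}_{2}$ in $\mathcal{T}\otimes_{\mathbb{I},\varphi}\varphi(\mathbb{I})$, $0\le i\le n_{\varphi}$.

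Next I would fix $\varphi\in\mathfrak{X}_{\mathbb{I}}$ and, for $0\le i\le n_{\varphi}$, choose $x_{i}\in\mathbb{I}$ with $\varphi(x_{i})=\varpi_{\varphi}^{\,i}$ (possible since $\varphi\colon\mathbb{I}\to\varphi(\mathbb{I})$ is surjective), and set $\mathcal{T}_{\varphi}^{(i)}:=\mathbb{I}e_{1}+(\mathfrak{c}+x_{i}\mathbb{I})e_{2}\subseteq\mathcal{T}$. Each $\mathcal{T}_{\varphi}^{(i)}$ is a $G_{\mathbf{Q}}$-stable $\mathbb{I}$-lattice, because $c_{g}\in\mathfrak{c}\subseteq\mathfrak{c}+x_{i}\mathbb{I}$; and $\mathcal{T}_{\varphi}^{(i)}\otimes_{\mathbb{I},\varphi}\varphi(\mathbb{I})$ has torsion-free quotient $\varphi(\mathbb{I})\bar{e}_{1}+(\varphi(\mathfrak{c})+\varpi_{\varphi}^{\,i}\varphi(\mathbb{I}))\bar{e}_{2}=\ell_{\varphi}^{(i)}$, using $i\le n_{\varphi}$ and $\varphi(\mathfrak{c})=(\varpi_{\varphi}^{\,n_{\varphi}})$. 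Hence an $\mathbb{I}[G_{\mathbf{Q}}]$-isomorphism $\mathcal{T}_{\varphi}^{(i)}\cong\mathcal{T}_{\varphi}^{(j)}$ would, after applying $-\otimes_{\mathbb{I},\varphi}\varphi(\mathbb{I})$ and passing to torsion-free quotients, yield a $\varphi(\mathbb{I})[G_{\mathbf{Q}}]$-isomorphism $\ell_{\varphi}^{(i)}\cong\ell_{\varphi}^{(j)}$, which forces $i=j$. So $\mathcal{T}_{\varphi}^{(0)},\dots,\mathcal{T}_{\varphi}^{(n_{\varphi})}$ represent $n_{\varphi}+1$ distinct elements of $\mathcal{L}(\rho_{\mathscr{F}})$, whence $\sharp\mathcal{L}(\rho_{\mathscr{F}})\ge n_{\varphi}+1=\sharp\mathcal{L}(\rho_{f_{\varphi}})$. (If $\mathcal{L}(\rho_{\mathscr{F}})$ is defined through an integrality condition such as reflexivity, one replaces the $\mathcal{T}_{\varphi}^{(i)}$ by the $\mathbb{I}$-lattices occurring at the corresponding stages of the proof of Theorem \ref{yandong}.)

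Finally, Corollary \ref{yandong2}(4) applies since (C) and (P) hold, and the hypothesis that $L_{p}(1-s,\chi_{\zeta}\chi_{1}^{-1}\chi_{2}\omega)$ has a zero in $\mathbf{Z}_{p}$ for some $\zeta\in\mu_{p^{\infty}}$ says precisely that $\sharp\mathcal{L}(\rho_{f_{\varphi}})$ is unbounded as $\varphi$ varies in $\mathfrak{X}_{\mathbb{I},\zeta}$; combined with the inequality just obtained, this forces $\sharp\mathcal{L}(\rho_{\mathscr{F}})=\infty$. I expect the main obstacle to be the first step, namely the identification $\varphi(\mathfrak{c})=(\varpi_{\varphi}^{\,n_{\varphi}})$ together with the verification that the $\ell_{\varphi}^{(i)}$ exhaust $\mathcal{L}(\rho_{f_{\varphi}})$ — i.e. that the congruence datum $(c_{g})_{g}$ of $\rho_{\mathscr{F}}$ genuinely governs the full lattice chain of each $\rho_{f_{\varphi}}$ computed in Theorem \ref{yandong} — and, should $\mathcal{L}(\rho_{\mathscr{F}})$ carry a reflexivity-type restriction, producing lifts of the prescribed type; these are the $\mathbb{I}$-level refinements of computations already present in the proof of Theorem \ref{yandong}, modulo some care with $\ker\varphi$-torsion under base change.
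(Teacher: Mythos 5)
Your overall strategy is the same as the paper's (lift the whole chain of stable lattices at a well-chosen specialization $\varphi$ to pairwise non-isomorphic $\mathbb{I}$-stable lattices, distinguish the lifts by specializing back, then let $\sharp\mathcal{L}(\rho_{f_\varphi})\to\infty$ via Corollary \ref{yandong2}(4)), and the second and third steps of your argument are essentially sound. But the step you yourself flag as "the main obstacle" is a genuine gap, and it is precisely the new content of the paper's Lemma \ref{Maz}. In your basis of the free lattice $\mathcal{T}$ (adapted to a stable line of $\mathcal{T}/\mathfrak{m}\mathcal{T}$), what the computations behind Theorem \ref{yandong} give you is only $\varphi(\mathfrak{b})\,\varphi(\mathfrak{c})=\varphi\bigl(I(\rho_{\mathscr{F}})\bigr)=(\varpi_\varphi^{\,n_\varphi})$, where $\mathfrak{b}$ is the ideal generated by the $b_g$ — and even this identity $\mathfrak{b}\mathfrak{c}=I(\rho_{\mathscr{F}})$ requires the basis to diagonalize $\rho(g_0)$ (Definition \ref{3.2.2}/Lemma \ref{2.6} are stated for an eigenbasis of $\rho(g_0)$, not for an arbitrary basis of $\mathcal{T}$; this part is repairable, since $\lambda_1-\lambda_2\in\mathbb{I}^{\times}$ lets one split $\mathcal{T}$ into $\rho(g_0)$-eigenlines). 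The real issue is that nothing in your argument rules out $\mathfrak{b}\subseteq\mathfrak{m}$, i.e.\ that $\mathcal{T}/\mathfrak{m}\mathcal{T}$ is decomposable. In that case $\varphi(\mathfrak{b})\subseteq(\varpi_\varphi)$, so $\mathrm{ord}_{\varpi_\varphi}\varphi(\mathfrak{c})<n_\varphi$: your lattices $\ell_\varphi^{(i)}$ are stable only for $i\le\mathrm{ord}_{\varpi_\varphi}\varphi(\mathfrak{c})$, they do not exhaust the segment $\mathcal{L}(\rho_{f_\varphi})$ (the missing classes lie "above" $[T]$, not below), and your lifted chain detects too few classes, with no control that this shortfall stays bounded as $n_\varphi\to\infty$. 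Establishing $\varphi(\mathfrak{c})=(\varpi_\varphi^{\,n_\varphi})$, equivalently that $T=\mathcal{T}\otimes_{\mathbb{I}}\varphi(\mathbb{I})$ is residually non-semisimple and hence an endpoint of the segment, is exactly what the paper proves in Lemma \ref{Maz}: using Corollary \ref{r} (here (C) and (P) enter, giving $I(\rho_{\mathscr{F}})=\hat{G}_{\chi_1^{-1}\chi_2}(X)\mathbb{I}$), the reducibility of $\rho\bmod \hat{G}_{\chi_1^{-1}\chi_2}(X)\mathbb{I}$, the construction of a stable sublattice $\mathcal{T}'\subset\mathcal{T}$ with $\mathcal{T}/\mathcal{T}'\cong\mathbb{I}/\hat{G}_{\chi_1^{-1}\chi_2}(X)\mathbb{I}$, and a diameter argument on the Bruhat--Tits segment via Proposition \ref{2.c}(3). (A shortcut consistent with your setup: once $\mathfrak{b}\mathfrak{c}=\hat{G}_{\chi_1^{-1}\chi_2}(X)\mathbb{I}$ and (P) says this ideal is prime, one of $\mathfrak{b},\mathfrak{c}$ must be the unit ideal; but some such argument must be supplied, and your proposal supplies none.)

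Once that lemma is available, the rest of what you write does work and closely parallels the paper: the paper lifts representatives $T_i\subset T$ by taking preimages $\pi^{-1}(T_i)$ under $\mathcal{T}\twoheadrightarrow T$, while you construct explicit lattices $\mathbb{I}e_1+(\mathfrak{c}+x_i\mathbb{I})e_2$; both are distinguished by applying $\otimes_{\mathbb{I},\varphi}\varphi(\mathbb{I})$ (and passing to the image in $T$) together with the pairwise non-isomorphy of the $\varphi(\mathbb{I})$-lattices from Proposition \ref{2.8}, and both conclude with Corollary \ref{yandong2}(4). So the verdict is: correct skeleton, same route as the paper, but the pivotal endpoint/indecomposability statement (Lemma \ref{Maz}) is asserted rather than proved, and without it the count of lifted lattices is not bounded below by $\sharp\mathcal{L}(\rho_{f_\varphi})$.
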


Corollary \ref{yandong3} will be proved in Section 3.4.

\begin{rem}
Mazur-Wiles \cite[\S 9]{MW2}, Tilouine \cite[Theorem 4.4]{T}  and Mazur-Tilouine \cite[\S 2, Corollary 6]{MT}  give a list of cases where the condition (F) is known to be true. 
\end{rem}

\begin{out}
The outline of this paper is as follows. In Section 2, we recall known results concerning the Bruhat-Tits tree of $\mathrm{GL}_2$, Hida deformation and Kubota-Leopoldt $p$-adic $L$-function. These will be used frequently in Section 3. In Section 3, first we determine the number of isomorphic classes of stable lattices in a given $p$-adic representation by using the Bella\"iche-Chenevier reducibility ideal $I\left(\rho \right)$. Then we give the proof of the main results. In Section 4, we give two examples of Hida deformations associated to an $\mathbb{I}$-adic normalized Hecke eigen cusp form $\mathscr{F} \in S^{\mathrm{ord}}\left(\omega^{k_0-1}, \mathbb{I} \right)$ when $\left(p, k_0 \right)=\left(691, 12 \right)$ and $\left(547, 486 \right)$.
\end{out}

\begin{ack}
The author expresses his sincere gratitude to Professor Tadashi Ochiai for his constant encouragement and spending a lot of time to read the manuscript carefully and pointing out mistakes. Thanks are also due to Kenji Sakugawa for reading the manuscript and correcting several mistakes. 
\end{ack}

\section{The Bruhat-Tits Tree, Hida deformation and $p$-adic $L$-function}

\subsection{The lattices and the Bruhat-Tits Tree}
Let $A$ be a commutative integral domain with field of fractions $K=\mathrm{Frac}(A)$ and $V$ a $n$-dimensional $K$-vector space. We say an $A$-submodule $T$ of $V$ is a lattice of $V$ if there exist two free $A$-submodules $L_1, L_2$ of $V$ such that $L_1 \subset T \subset L_2$ and $\mathrm{rank}_A L_1=n$. If $A$ is Noetherian, we have that an A-submodule $T$ of $V$ is a lattice of $V$ if and only if $T$ is finitely generated and $T \otimes_A K=V$ (see \cite[V\hspace{-.1em}I\hspace{-.1em}I, 4.1, Corollary to Proposition 1]{Bour}).
Now we assume $A=\mathcal{O}$ which is the ring of integers of a finite extension field of $\mathbf{Q}_p$ with a fixed uniformizer $\varpi$ of $\mathcal{O}$. For a given $p$-adic representation $$\rho : G \rightarrow \mathrm{Aut}_K(V)$$ of a compact group $G$, we say that $T$ is a $G$-stable lattice of $V$ if $T$ is a lattice and $\rho(G)T=T$. This means that $T$ is also an $\mathcal{O}[G]$-module. Since $G$ is compact, there exists a $G$-stable lattice in $V$ (see \cite{Seab} pp.1-2). We denote by $\rho_T$ the representation $$\rho_T : G \rightarrow \mathrm{Aut}_{\mathcal{O}} (T) \cong \mathrm{GL}_n(\mathcal{O})$$and by $\bar{\rho}_T$ the representation $\rho_T\ \mathrm{mod}\ \varpi$ as follows: $$\rho_T\ \mathrm{mod}\ \varpi : G \stackrel{\rho_{T}}{\rightarrow} \mathrm{Aut}_{\mathcal{O}} (T) \stackrel{\mathrm{mod}\ \varpi}{\longrightarrow} \mathrm{Aut}_{\mathcal{O}/\left(\varpi \right)}\left(T/\varpi T \right) \cong \mathrm{GL}_n\left(\mathcal{O}/\left(\varpi \right) \right).$$For stable lattices $T$ and $T^{\prime}$, the representation $\bar{\rho}_T, \bar{\rho}_{T^{\prime}}$ can be non-isomorphic to each other. However the semi-simplification $\bar{\rho}^{\mathrm{ss}}_T$ of $\bar{\rho}_T$ is isomorphic to $\bar{\rho}^{\mathrm{ss}}_{T^{\prime}}$ by the Brauer-Nesbitt theorem. We denote by $\bar{\rho}^{\mathrm{ss}}$ the semi-simplification of $\bar{\rho}_T$.

Now following \cite{SeTe} and \cite{Bell3}, we introduce the graph structure of lattices which will be used to prove Proposition \ref{2.8}. From now on to the end of this section we assume $n=2$.

For a lattice $T$ of $V$, we denote by $[T]=\set{xT | x \in K^{\times}}$ the equivalence class up to homotheties. Let $\mathcal{X}$ be the set of all $[T]$ where $T$ is a lattice. We say that a point $x^{\prime}$ in $\mathcal{X}$ is a neighbor of a point $x \in \mathcal{X}$ if $x^{\prime} \neq x$ and there are lattices $T, T^{\prime}$ of $V$ such that $x=[T], x^{\prime}=[T^{\prime}]$ and $\varpi T \subset T^{\prime} \subset T$. In this way one defines a combinatorial graph structure on $\mathcal{X}$.

\begin{thm}[{\cite[Chapter II, Theorem 1]{SeTe}}]\label{2.1}
The graph $\mathcal{X}$ is a tree.
\end{thm}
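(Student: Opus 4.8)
\textbf{Proof strategy for Theorem \ref{2.1}.} The plan is to establish the two defining properties of a tree: that $\mathcal{X}$ is connected, and that it contains no cycle (equivalently, that between any two vertices there is a unique path without backtracking). I would work with a fixed basis of $V$, so that $K$-lattices can be handled concretely, and exploit the theory of elementary divisors over the discrete valuation ring $\mathcal{O}$.

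First I would prove connectedness. Given two lattice classes $[T]$ and $[T']$, choose representatives with $T' \subset T$ (possible after scaling $T'$ by a suitable power of $\varpi$, since both are lattices in the same $2$-dimensional space). By the elementary divisor theorem over $\mathcal{O}$, there is an $\mathcal{O}$-basis $e_1, e_2$ of $T$ and integers $a \le b$ with $T' = \varpi^{a} \mathcal{O} e_1 \oplus \varpi^{b} \mathcal{O} e_2$; after rescaling $T'$ I may assume $a = 0$, so $T' = \mathcal{O} e_1 \oplus \varpi^{b}\mathcal{O} e_2$ with $b \ge 0$. Then the chain of lattices $T = T_0 \supset T_1 \supset \dots \supset T_b = T'$ defined by $T_j = \mathcal{O} e_1 \oplus \varpi^{j}\mathcal{O} e_2$ satisfies $\varpi T_j \subset T_{j+1} \subset T_j$, so consecutive classes are neighbors (they are distinct because $T_{j+1} \ne T_j$ and neither is a homothety of the other). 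This exhibits a path in $\mathcal{X}$ from $[T]$ to $[T']$, proving connectedness; the integer $b$ is the graph-theoretic distance between the two vertices, and this will be used below.

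Next I would prove there are no cycles. The key local observation is a description of the neighbors of a fixed vertex $[T]$: if $\varpi T \subset T' \subset T$ and $[T'] \ne [T]$, then $T'/\varpi T$ is a proper nonzero $\mathcal{O}/(\varpi)$-subspace of the $2$-dimensional space $T/\varpi T$, hence a line; conversely any such line $\ell$ determines $T' = $ (preimage of $\ell$), and distinct lines give non-homothetic lattices. Thus the neighbors of $[T]$ are in bijection with the lines in $T/\varpi T \cong (\mathcal{O}/(\varpi))^2$, i.e. with $\mathbf{P}^1(\mathcal{O}/(\varpi))$. To rule out cycles I would argue by the distance function: suppose $[T]$ and $[T'']$ are at distance $b \ge 1$, witnessed by $T'' = \mathcal{O} e_1 \oplus \varpi^{b}\mathcal{O} e_2 \subset T = \mathcal{O} e_1 \oplus \mathcal{O} e_2$ with $b$ minimal. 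I would show that exactly one neighbor $[T_1]$ of $[T]$ lies at distance $b-1$ from $[T'']$ (namely $T_1 = \mathcal{O} e_1 \oplus \varpi \mathcal{O} e_2$), while every other neighbor lies at distance $b+1$. Granting this, any path from $[T]$ to $[T'']$ must, after the first step, either already have taken the distance-decreasing neighbor or have strictly increased the distance, and an induction on $b$ shows the reduced path from $[T]$ to $[T'']$ is unique; a cycle would give two distinct reduced paths between its endpoints, a contradiction. Combined with connectedness, this shows $\mathcal{X}$ is a tree.

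The main obstacle is the distance computation in the no-cycles step: I must verify carefully, using elementary divisors, that passing to a neighbor of $[T]$ other than the distinguished one $[T_1]$ genuinely increases the distance to $[T'']$ rather than possibly keeping it the same or decreasing it. Concretely, if $[T_1']$ is a neighbor of $[T]$ corresponding to a line $\ell \ne \overline{\mathcal{O} e_1}$ in $T/\varpi T$, I would compute the elementary divisors of $T''$ (suitably rescaled) inside $T_1'$ and show the larger of the two exponents is $b+1$. This requires choosing coordinates adapted simultaneously to the filtration witnessing $\mathrm{dist}([T],[T'']) = b$ and to the line $\ell$, and checking the generic versus special position of $\ell$ relative to $\overline{\mathcal{O} e_1}$; the computation is elementary but is where all the real content sits, and I would present it as a short lemma about $2 \times 2$ matrices over $\mathcal{O}$ in Smith normal form.
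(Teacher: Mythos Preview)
The paper does not give its own proof of this theorem: it simply states the result and cites \cite[Chapter II, Theorem 1]{SeTe}. Your proposal is essentially Serre's classical argument from that reference (connectedness via elementary divisors, then acyclicity via the observation that among the neighbors of a vertex exactly one decreases the distance to a fixed target while the rest increase it), so there is nothing to compare beyond noting that you have reproduced the cited proof rather than anything the present paper supplies.
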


Now we recall some basic notions on the tree $\mathcal{X}$. Let $x, x^{\prime} \in \mathcal{X}$. A path without backtracking from $x$ to $x^{\prime}$, which is denoted by $\mathrm{Path}_{x, x^{\prime}}$ is a sequence $x=x_0, x_1, . . ., x_n=x^{\prime}$ of points in $\mathcal{X}$ such that $x_i$ is a neighbor of $x_{i+1}$ and $x_i \neq x_j$ if $i \neq j$. We define the integer $n=d(x, x^{\prime}) \geq 0$ to be the distance between $x$ and $x^{\prime}$. Let $x=[T]$ and we fix a positive integer $n$, then there is a natural bijection between the set of the points $x^{\prime}$ in $\mathcal{X}$ such that $d\left(x, x^{\prime} \right)=n$ and the set of lattices $\varpi^{n}T \subset T^{\prime} \subset T$ such that $T/T^{\prime} \cong \mathcal{O}/\left(\varpi \right)^n$ as an $\mathcal{O}$-module.

In a tree, we define the segment $[x, x^{\prime}]$ as 

\begin{eqnarray*}
[x, x^{\prime}] =\left\{ \begin{array}{ll}
\left\{x \right\} & (x=x^{\prime}) \\
\mathrm{Path}_{x, x^{\prime}} & (x \neq x^{\prime}) \\
\end{array} \right.
\end{eqnarray*}
A subset $C$ of $\mathcal{X}$ is called a convex if for every $x, x^{\prime} \in C$, the segment $[x, x^{\prime}] \subset C$. 

We denote by $\mathcal{X}(\rho)$ the set of $\mathcal{X}$ that are fixed by $\rho(G)$. We summarize some results on $\mathcal{X}\left(\rho \right)$:
\begin{prp}\label{2.c}\

\begin{list}{}{}

\item[(1)]$\mathcal{X}(\rho)$ is a convex (\cite[\S 3.1]{Bell3}).
\item[(2)]If $x \in \mathcal{X}(\rho)$, then $x$ has no neighbor in $\mathcal{X}(\rho)$ if and only if $\overline{\rho}_x$ is irreducible (\cite[Proposition 11 (d)-(i)]{Bell3}).
\item[(3)]If $x \in \mathcal{X}(\rho)$, then $x$ has exactly one neighbor in $\mathcal{X}(\rho)$ if and only if $\overline{\rho}_x$ is reducible but indecomposable (\cite[Proposition 11 (d)-(ii)]{Bell3}). 
\item[(4)]If $x \in \mathcal{X}(\rho)$, then $x$ has exactly two neighbors in $\mathcal{X}(\rho)$ if and only if $\overline{\rho}_x$ is decomposed into two distinct characters (\cite[Proposition 11 (d)-(iii)]{Bell3}).
\item[(5)]$\rho$ is irreducible if and only if $\mathcal{X}(\rho)$ is bounded (\cite[Lemme 10]{Bell3}).
\item[(6)]Assume that $\rho$ is irreducible and $\bar{\rho}^{\mathrm{ss}} \cong \psi_1 \oplus \psi_2$ of characters $\psi_1, \psi_2 : G \rightarrow {\left(\mathcal{O}/\left(\varpi \right) \right)}^{\times}$ with $\psi_1 \neq \psi_2$. Then $\mathcal{X}(\rho)$ is a segment.

\end{list}

\end{prp}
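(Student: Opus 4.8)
\textbf{Proof proposal for Proposition \ref{2.c} (6).}

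The plan is to combine the convexity and boundedness of $\mathcal{X}(\rho)$ provided by (1) and (5) with the local structure of $\mathcal{X}(\rho)$ recorded in (2)--(4), using the Brauer--Nesbitt theorem to pin down the reduction $\bar{\rho}_x$ at every vertex $x \in \mathcal{X}(\rho)$. First I would note that $\mathcal{X}(\rho) \neq \emptyset$: since $G$ is compact there is a $G$-stable lattice, so $\mathcal{X}(\rho)$ contains at least one point. By (1), $\mathcal{X}(\rho)$ is convex, hence a subtree of $\mathcal{X}$ (a convex subset of a tree is connected and acyclic). By (5), since $\rho$ is irreducible, $\mathcal{X}(\rho)$ is bounded; and $\mathcal{X}$ is locally finite (indeed $(q+1)$-regular, where $q=\sharp(\mathcal{O}/(\varpi))<\infty$), so a bounded subtree is finite. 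Thus $\mathcal{X}(\rho)$ is a nonempty finite subtree of $\mathcal{X}$.

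Next I would control the degree of an arbitrary vertex $x \in \mathcal{X}(\rho)$ inside $\mathcal{X}(\rho)$. By the Brauer--Nesbitt theorem (recalled in Section 2.1), $\bar{\rho}^{\mathrm{ss}}_x \cong \bar{\rho}^{\mathrm{ss}} \cong \psi_1 \oplus \psi_2$ for every such $x$; in particular $\bar{\rho}_x$ is never irreducible, so by (2) every $x$ has at least one neighbor in $\mathcal{X}(\rho)$. Now there are exactly two possibilities for $\bar{\rho}_x$. If $\bar{\rho}_x$ is semisimple, then $\bar{\rho}_x \cong \bar{\rho}^{\mathrm{ss}}_x \cong \psi_1 \oplus \psi_2$, which is a direct sum of two \emph{distinct} characters precisely because $\psi_1 \neq \psi_2$; by (4) the vertex $x$ then has exactly two neighbors in $\mathcal{X}(\rho)$. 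If instead $\bar{\rho}_x$ is reducible but not semisimple, i.e. reducible but indecomposable, then by (3) the vertex $x$ has exactly one neighbor in $\mathcal{X}(\rho)$. In either case $x$ has degree $1$ or $2$ in the graph $\mathcal{X}(\rho)$; in particular the degenerate case of a single isolated vertex does not occur, since such a vertex would have degree $0$, forcing $\bar{\rho}_x$ to be irreducible by (2), contradicting $\bar{\rho}^{\mathrm{ss}}_x \cong \psi_1 \oplus \psi_2$.

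Finally I would invoke the elementary graph-theoretic fact that a finite tree with at least two vertices in which every vertex has degree at most $2$ is a path: it has exactly two leaves, all remaining vertices have degree $2$, and ordering the vertices along the unique geodesic between the two leaves exhibits it as $\mathrm{Path}_{x,x'}$ for those two endpoints $x, x' \in \mathcal{X}(\rho)$. Hence $\mathcal{X}(\rho) = [x,x']$ is a segment, as claimed.

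I do not expect a serious obstacle here; the argument is essentially an assembly of (1)--(5) plus Brauer--Nesbitt. The only points meriting a line of care are (i) the local finiteness of $\mathcal{X}$, which turns ``bounded'' (from (5)) into ``finite'', and (ii) the use of the hypothesis $\psi_1 \neq \psi_2$ to guarantee that the semisimple case for $\bar{\rho}_x$ is genuinely the ``two distinct characters'' case of (4) and hence contributes degree exactly $2$ rather than a vertex outside the scope of (2)--(4).
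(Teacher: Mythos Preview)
Your proof is correct and follows essentially the same approach as the paper: the paper simply states that (6) ``easily follows from the assertions (1), (4) and (5)'' and refers to \cite{Bell1} for the argument, whereas you carry out the details, additionally invoking (2) and (3) together with Brauer--Nesbitt to rule out isolated vertices and to confirm that every vertex has degree $1$ or $2$. This is the intended argument, just spelled out more fully than in the paper.
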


The assertion (6) easily follows from the assertions (1), (4) and (5) (cf. \cite{Bell1} the arguments before \S 1.3 in page 7).

\subsection{$\mathbb{I}$-adic forms and Galois representations}

In this section, we review some fundamental results on $\mathbb{I}$-adic cusp forms and their Galois representations. For more detail on this theory, the reader can refer to Chapter 7 of \cite{H3}.

Recall that $\mathbb{I}$ is an integrally closed local domain which is finite flat over $\Lambda_{\chi}=\mathbf{Z}_p[\chi][[X]]$, where $\chi$ is a primitive Dirichlet character modulo $Np$.   We denote by $\mathfrak{X}_{\Lambda_{\chi}}$ and $\mathfrak{X}_{\mathbb{I}}$ the sets of homomorphisms defined as follows:

$$\mathfrak{X}_{\Lambda_{\chi}}=\left\{\nu_{k, \zeta} : \Lambda_{\chi} \rightarrow \overline{\mathbf{Q}}_p\  \mid\ \nu_{k, \zeta}(1+X)=\zeta u^{k-2}, (k, \zeta) \in \mathbf{Z}_{\geq 2} \times \mu_{p^{\infty}} \right\},$$

$$\mathfrak{X}_{\mathbb{I}}=\left\{\varphi : \mathbb{I} \rightarrow \overline{\mathbf{Q}}_p\  \mid\ \varphi\mid_{\Lambda_{\chi}}=\nu_{k_{\varphi}, \zeta_{\varphi}}, \nu_{k_{\varphi}, \zeta_{\varphi}} \in \mathfrak{X}_{\Lambda_{\chi}}  \right\}.$$If $\zeta \in \mu_{p^r} (r \geq 0)$ is a primitive $p^r$-th root of unity, let $$\chi_{\zeta} : \left(\mathbf{Z}/p^{r+1}\mathbf{Z} \right)^{\times} \rightarrow \overline{\mathbf{Q}}_p^{\times}, u\ \mathrm{mod}\ p^{r+1} \mapsto \zeta$$ be the character associated to $\zeta$.

\begin{dfn}\label{3.1}

We call $\mathscr{F}=\displaystyle \sum^{\infty}_{n=1}c(n, \mathscr{F})q^n \in \mathbb{I}[[q]]$ an $\mathbb{I}$-adic form (resp. $\mathbb{I}$-adic normalized Hecke eigen cusp form) with Dirichlet character $\chi$ if for each $\varphi \in \mathfrak{X}_{\mathbb{I}}$ with $\varphi\mid_{\Lambda_{\chi}}=\nu_{k_{\varphi}, \zeta_{\varphi}}$ such that $\zeta_{\varphi}$ is a primitive $p^{r_{\varphi}}$-th root of unity, $$f_{\varphi}:=\displaystyle \sum^{\infty}_{n=1}\varphi(c(n, \mathscr{F}))q^n \in S_{k_{\varphi}}(\Gamma_0(Np^{r_{\varphi}+1}), \chi_{\zeta_{\varphi}}\chi\omega^{1-k_{\varphi}}, \varphi(\mathbb{I}))$$ is the $q$-expansion of a $p$-ordinary cusp form (resp. $p$-ordinary normalized Hecke eigen cusp form).
\end{dfn}

We also denote by $\varphi\left(\mathscr{F} \right)$ the above cusp form $f_{\varphi}$. Recall that $S^{\mathrm{ord}}\left(\chi, \mathbb{I} \right)$ is the space of $\mathbb{I}$-adic forms with Dirichlet character $\chi$.

\begin{thm}[Hida {\cite[\S 7.4, Theorem 7]{H3}}]\label{Hida}
Let $\zeta \in \mu_{p^r} (r \geq 0)$ be a primitive $p^r$-th root of unity and $\chi_{\zeta}$ the character associated to $\zeta$. Let $$f \in S_{k}\left(\Gamma_0\left(Np^{r+1} \right), \chi_{\zeta}\chi\omega^{1-k}, \overline{\mathbf{Q}} \right)$$ be a $p$-ordinary normalized Hecke eigen cusp form of weight $k \geq 1$. Then there exist an integrally closed local domain $\mathbb{I}$ which is finite flat over $\Lambda_{\chi}$, an $\mathbb{I}$-adic normalized Hecke eigen cusp form $\mathscr{F} \in S^{\mathrm{ord}}\left(\chi, \mathbb{I} \right)$ and a $\varphi \in \mathfrak{X}_{\mathbb{I}}$ such that $\varphi(\mathscr{F})=f$.

\end{thm}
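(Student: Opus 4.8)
The plan is to follow the standard route through Hida's theory of ordinary $\Lambda$-adic families: realize the Hecke eigensystem of $f$ as a $\overline{\mathbf{Q}}_p$-valued point of the big ordinary Hecke algebra, pass to an irreducible component through that point, normalize it, and read off $\mathscr{F}$ from the tautological Hecke eigensystem carried by that component.

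Write $\mathbf{h}=\mathbf{T}(\chi,\Lambda_\chi)$. I would use two inputs, both due to Hida: (i) $\mathbf{h}$ and $S^{\mathrm{ord}}(\chi,\Lambda_\chi)$ are finite free over $\Lambda_\chi$, and the $\Lambda_\chi$-bilinear pairing $S^{\mathrm{ord}}(\chi,\Lambda_\chi)\times\mathbf{h}\to\Lambda_\chi$, $(g,T)\mapsto a(1,Tg)$, is perfect, so $S^{\mathrm{ord}}(\chi,\Lambda_\chi)\cong\mathrm{Hom}_{\Lambda_\chi}(\mathbf{h},\Lambda_\chi)$; and (ii) the control theorem: for every $\nu_{k',\zeta'}\in\mathfrak{X}_{\Lambda_\chi}$ with $k'\geq 2$, the specialization $\mathbf{h}\otimes_{\Lambda_\chi,\nu_{k',\zeta'}}\overline{\mathbf{Q}}_p$ is canonically the Hecke algebra acting faithfully on the space of $p$-ordinary cusp forms of weight $k'$, level $Np^{s+1}$ and character $\chi_{\zeta'}\chi\omega^{1-k'}$ (here $\zeta'$ is primitive of order $p^{s}$). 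Freeness in (i) also makes the duality behave well under base change: for any finite flat $\Lambda_\chi$-algebra $\mathbb{I}$ one gets $S^{\mathrm{ord}}(\chi,\mathbb{I})\cong\mathrm{Hom}_{\mathbb{I}}(\mathbf{T}(\chi,\mathbb{I}),\mathbb{I})$.

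With these in hand I would build $(\mathbb{I},\mathscr{F},\varphi)$, treating first the case $k\geq 2$. Since $f$ is $p$-ordinary, its Hecke eigensystem is fixed by the ordinary projector, so (ii) yields a $\Lambda_\chi$-algebra homomorphism $\lambda:\mathbf{h}\to\overline{\mathbf{Q}}_p$ with $\lambda(T(l))=a(l,f)$ for all primes $l$ and $\lambda|_{\Lambda_\chi}=\nu_{k,\zeta}$. As $\mathbf{h}$ is module-finite over the complete local ring $\Lambda_\chi$ it is a finite product of local $\Lambda_\chi$-algebras, each finite free over $\Lambda_\chi$; let $\mathbf{h}_0$ be the factor through which $\lambda$ factors, let $\mathfrak{P}\subseteq\ker(\mathbf{h}_0\to\overline{\mathbf{Q}}_p)$ be a minimal prime of $\mathbf{h}_0$, put $\overline{\mathbb{I}}:=\mathbf{h}_0/\mathfrak{P}$, and let $\mathbb{I}$ be the integral closure of the domain $\overline{\mathbb{I}}$ in its fraction field. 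I would then verify: $\mathbb{I}$ is a normal domain by construction; it is module-finite over $\overline{\mathbb{I}}$, hence over $\Lambda_\chi$, because $\Lambda_\chi$ is excellent; it is local, being a domain module-finite over a complete local ring; and it is \emph{flat} over $\Lambda_\chi$ --- this is the one place the passage to the normalization is genuinely needed --- by the Auslander--Buchsbaum formula: $\Lambda_\chi=\mathbf{Z}_p[\chi][[X]]$ is regular local of dimension $2$, and $\mathbb{I}$ is a normal, hence $S_2$, hence Cohen--Macaulay, domain of Krull dimension $2$, so $\mathrm{pd}_{\Lambda_\chi}\mathbb{I}=2-\mathrm{depth}_{\Lambda_\chi}\mathbb{I}=0$ and $\mathbb{I}$ is $\Lambda_\chi$-free. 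I would then let $\mathscr{F}=\sum_{n\geq 1}c(n,\mathscr{F})q^n\in\mathbb{I}[[q]]$ be the $\mathbb{I}$-adic form corresponding, via $S^{\mathrm{ord}}(\chi,\mathbb{I})\cong\mathrm{Hom}_{\mathbb{I}}(\mathbf{T}(\chi,\mathbb{I}),\mathbb{I})$, to the $\mathbb{I}$-algebra map $\mathbf{T}(\chi,\mathbb{I})=\mathbf{h}\otimes_{\Lambda_\chi}\mathbb{I}\to\mathbb{I}$ induced by $\mathbf{h}\twoheadrightarrow\mathbf{h}_0\twoheadrightarrow\overline{\mathbb{I}}\hookrightarrow\mathbb{I}$; then $c(l,\mathscr{F})$ is the image of $T(l)$, and $\mathscr{F}$ is an eigenform because that map is an algebra homomorphism. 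To see $\mathscr{F}$ is $\mathbb{I}$-adic in the sense of Definition \ref{3.1}, apply (ii) at each $\varphi'\in\mathfrak{X}_{\mathbb{I}}$ (so $k_{\varphi'}\geq 2$): $\varphi'(\mathscr{F})$ has Hecke eigenvalues $\varphi'(c(l,\mathscr{F}))$, factors through $\mathbf{h}\otimes_{\Lambda_\chi,\nu_{k_{\varphi'},\zeta_{\varphi'}}}\overline{\mathbf{Q}}_p$, and is therefore a $p$-ordinary normalized Hecke eigen cusp form of the prescribed weight, level and character. Finally, to produce $\varphi$, set $\mathfrak{q}=\ker(\overline{\mathbb{I}}\to\overline{\mathbf{Q}}_p)$, choose a prime $\tilde{\mathfrak{q}}$ of $\mathbb{I}$ over $\mathfrak{q}$, note that $\mathbb{I}/\tilde{\mathfrak{q}}$ is a domain finite over $\overline{\mathbb{I}}/\mathfrak{q}$ and hence over $\mathbf{Z}_p$, so it embeds into $\overline{\mathbf{Q}}_p$ compatibly with $\lambda$; the composite $\varphi:\mathbb{I}\twoheadrightarrow\mathbb{I}/\tilde{\mathfrak{q}}\hookrightarrow\overline{\mathbf{Q}}_p$ then has $\varphi|_{\Lambda_\chi}=\nu_{k,\zeta}$, so $\varphi\in\mathfrak{X}_{\mathbb{I}}$, and $\varphi(c(l,\mathscr{F}))=\lambda(T(l))=a(l,f)$ for all $l$, whence $\varphi(\mathscr{F})=f$.

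The case $k=1$ I would treat by the same scheme, feeding in the weight-one analogue of the control theorem (classicality of $p$-ordinary weight-one forms and their occurrence in $\Lambda$-adic families, after Wiles building on Deligne--Serre) in place of (ii) and interpreting $\varphi$ as the corresponding weight-one arithmetic specialization; the commutative-algebra steps are untouched. I expect the genuine difficulty to lie entirely in input (ii) (equivalently (i)): that $\mathbf{h}$ is $\Lambda_\chi$-free and that the ordinary projector commutes with specialization is the substance of Hida's theorem. Everything I have added on top --- cutting out an irreducible component and normalizing, the Auslander--Buchsbaum argument for flatness, and the bookkeeping putting $\varphi$ in $\mathfrak{X}_{\mathbb{I}}$ with $\varphi(\mathscr{F})=f$ --- is routine if a little fussy.
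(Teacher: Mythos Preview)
The paper does not give its own proof of this theorem: it is quoted verbatim as a result of Hida (the reference \cite[\S 7.4, Theorem 7]{H3}) and no argument is supplied. So there is nothing in the paper to compare against beyond the citation itself.

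Your sketch is the standard reconstruction of Hida's proof and is correct in outline: realize the eigensystem of $f$ as a $\overline{\mathbf{Q}}_p$-point of the big ordinary Hecke algebra $\mathbf{h}=\mathbf{T}(\chi,\Lambda_\chi)$ via the control theorem, cut down to an irreducible component through that point, normalize, and extract $\mathscr{F}$ from the tautological eigensystem using the duality $S^{\mathrm{ord}}(\chi,\mathbb{I})\cong\mathrm{Hom}_{\mathbb{I}}(\mathbf{T}(\chi,\mathbb{I}),\mathbb{I})$. The Auslander--Buchsbaum step (normal $2$-dimensional domain over the regular local ring $\Lambda_\chi$ is Cohen--Macaulay, hence free) is exactly the right way to secure finite flatness of $\mathbb{I}$, and the lift of $\lambda$ to a $\varphi\in\mathfrak{X}_{\mathbb{I}}$ by choosing a prime of $\mathbb{I}$ over $\ker\lambda$ is the standard going-up argument. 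This is essentially what one finds in Hida's book, so your proposal matches the intended source.

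One small point worth flagging: as the paper defines it, $\mathfrak{X}_{\mathbb{I}}$ consists only of specializations with $k_\varphi\geq 2$, so the $k=1$ clause of the theorem sits a little awkwardly with the paper's own conventions. You are right that the $k=1$ case requires the additional classicality input (Wiles, building on Deligne--Serre) rather than Hida's control theorem alone; since the paper never uses the $k=1$ case anywhere, this is a cosmetic issue with the statement rather than a gap in your argument.
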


\begin{dfn}\label{lat}
A Galois representation $\rho : \mathrm{Gal}(\overline{\mathbf{Q}}/\mathbf{Q}) \longrightarrow \mathrm{GL}_2(\mathrm{Frac}(\mathbb{I}))$ is continuous if there exists a lattice $\mathcal{T} \subset \mathrm{Frac}(\mathbb{I})^{\oplus 2}$ which is stable under $\mathrm{Gal}(\overline{\mathbf{Q}}/\mathbf{Q})$-action such that $\rho : \mathrm{Gal}(\overline{\mathbf{Q}}/\mathbf{Q}) \longrightarrow \mathrm{Aut}_{\mathbb{I}}(\mathcal{T})$ is continuous with respect to the topology of $\mathcal{T}$ defined by $\mathfrak{m}$ the maximal ideal of $\mathbb{I}$.
\end{dfn}

Hida associates a continuous Galois representation over $\mathrm{Frac}(\mathbb{I})$ to an $\mathbb{I}$-adic normalized Hecke eigen cusp form $\mathscr{F}$ as follows:

\begin{thm}[Hida {\cite[Theorem 2.1]{H2}}]\label{3.3}
There exists a continuous irreducible representation $\rho_{\mathscr{F}} : \mathrm{Gal}(\overline{\mathbf{Q}}/\mathbf{Q}) \longrightarrow \mathrm{GL}_2(\mathrm{Frac}(\mathbb{I}))$ with the following properties:
\begin{list}{}{}
\item[1.] $\rho_{\mathscr{F}}$ is unramified outside $Np$.
\item[2.] For the geometric Frobenius element $\mathrm{Frob}_l$ at $l \nmid Np$, we have:
$$\mathrm{tr}\rho_{\mathscr{F}}(\mathrm{Frob}_l)=c(l, \mathscr{F}),$$
$$\mathrm{det}\rho_{\mathscr{F}}(\mathrm{Frob}_l)=\chi(l)\left(u\left(1+X \right) \right)^{s_l},$$
where $d=\omega(d)(1+p)^{s_d}$ under the isomorphism $\mathbf{Z}_p^{\times} \cong \mu_{p-1} \times (1+p\mathbf{Z}_p).$
\end{list}
\end{thm}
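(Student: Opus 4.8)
\emph{Proof proposal.} The plan is to construct $\rho_{\mathscr{F}}$ by interpolating, over the Zariski dense set $\mathfrak{X}_{\mathbb{I}}\subset\mathrm{Spec}\,\mathbb{I}$ of arithmetic specializations, the classical $p$-adic Galois representations attached to the cusp forms $f_{\varphi}=\varphi(\mathscr{F})$. Since every $\varphi\in\mathfrak{X}_{\mathbb{I}}$ has $k_{\varphi}\geq 2$, Deligne's construction attaches to the cuspidal eigenform $f_{\varphi}$ an irreducible continuous representation $\rho_{f_{\varphi}}:\mathrm{Gal}(\overline{\mathbf{Q}}/\mathbf{Q})\to\mathrm{GL}_2(\overline{\mathbf{Q}}_p)$, unramified outside $Np$, with $\mathrm{tr}\,\rho_{f_{\varphi}}(\mathrm{Frob}_l)=\varphi(c(l,\mathscr{F}))$ and $\det\rho_{f_{\varphi}}(\mathrm{Frob}_l)=(\chi_{\zeta_{\varphi}}\chi\omega^{1-k_{\varphi}})(l)\,l^{k_{\varphi}-1}$ for $l\nmid Np$ (irreducibility of $\rho_{f_{\varphi}}$ being a theorem of Ribet). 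The one elementary fact I would record at the outset is that $\bigcap_{\varphi\in\mathfrak{X}_{\mathbb{I}}}\mathrm{Ker}\,\varphi=0$, since the arithmetic primes are Zariski dense in $\mathrm{Spec}\,\mathbb{I}$ and $\mathbb{I}$ is a domain; consequently any identity with coefficients in $\mathbb{I}$ may be verified by applying every $\varphi$ and invoking the corresponding identity among the $\rho_{f_{\varphi}}$.

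First I would assemble the trace. Write $\mathcal{O}_{\varphi}$ for the ring of integers of the finite extension $\mathrm{Frac}(\varphi(\mathbb{I}))$ of $\mathbf{Q}_p$; since $\mathrm{Gal}(\overline{\mathbf{Q}}/\mathbf{Q})$ is compact, $\mathrm{tr}\,\rho_{f_{\varphi}}$ factors through the Galois group $G$ of the maximal extension of $\mathbf{Q}$ unramified outside $Np$ and takes values in $\mathcal{O}_{\varphi}$. This gives a continuous map $G\to\prod_{\varphi}\mathcal{O}_{\varphi}$, $g\mapsto(\mathrm{tr}\,\rho_{f_{\varphi}}(g))_{\varphi}$, which I would compare with the injection $\mathbb{I}\hookrightarrow\prod_{\varphi}\mathcal{O}_{\varphi}$, $a\mapsto(\varphi(a))_{\varphi}$. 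The image of the first map is contained in that of the second: the two agree on the Frobenius classes $\mathrm{Frob}_l$ ($l\nmid Np$), where the value is $c(l,\mathscr{F})$; those classes are dense in $G$ by Chebotarev; $\mathbb{I}$ is profinite, hence compact and closed in the profinite ring $\prod_{\varphi}\mathcal{O}_{\varphi}$; and both maps are continuous. We thus obtain a continuous function $T:G\to\mathbb{I}$ with $\varphi\circ T=\mathrm{tr}\,\rho_{f_{\varphi}}$ for every $\varphi$.

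Next I would check that $T$ is a two-dimensional pseudo-character in the sense of Rouquier and Taylor: $T(1)=2$, $T(gh)=T(hg)$, and the degree-three identity $\sum_{\sigma\in\mathfrak{S}_3}\mathrm{sgn}(\sigma)\,T_{\sigma}(g_1,g_2,g_3)=0$ for all $g_1,g_2,g_3\in G$. Each of these is an identity in $\mathbb{I}$ that, after applying an arbitrary $\varphi$, reduces to the corresponding identity for $\mathrm{tr}\,\rho_{f_{\varphi}}$ and holds because $\rho_{f_{\varphi}}$ is two-dimensional; by the remark above it then holds in $\mathbb{I}$. Base changing $T$ to the field $\mathrm{Frac}(\mathbb{I})$ and invoking the reconstruction theorem for two-dimensional pseudo-characters over a field of characteristic zero, $T\otimes\mathrm{Frac}(\mathbb{I})$ is the trace of a semisimple representation $\rho_{\mathscr{F}}:G\to\mathrm{GL}_2(\mathrm{Frac}(\mathbb{I}))$, unique up to isomorphism; and since $T$ takes values in $\mathbb{I}$ and is continuous, the standard trace-pairing argument (using that $\mathbb{I}[\rho_{\mathscr{F}}(G)]$ is then an $\mathbb{I}$-order in $M_2(\mathrm{Frac}(\mathbb{I}))$) produces a $\mathrm{Gal}$-stable $\mathbb{I}$-lattice and the continuity required by Definition \ref{lat}. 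For irreducibility, were $\rho_{\mathscr{F}}\cong\psi_1\oplus\psi_2$ with characters $\psi_i:G\to\mathrm{Frac}(\mathbb{I})^{\times}$, then putting $D(g):=\tfrac12\bigl(T(g)^2-T(g^2)\bigr)\in\mathbb{I}$ (legitimate since $p\geq 3$), each $\psi_i(\mathrm{Frob}_l)$ is a root of $X^2-T(\mathrm{Frob}_l)X+D(\mathrm{Frob}_l)\in\mathbb{I}[X]$, hence integral over $\mathbb{I}$ and valued in the normalization $\widetilde{\mathbb{I}}$, which is finite over $\mathbb{I}$; extending some $\varphi$ to $\widetilde{\varphi}:\widetilde{\mathbb{I}}\to\overline{\mathbf{Q}}_p$ would give $\mathrm{tr}\,\rho_{f_{\varphi}}=\widetilde{\varphi}\circ\psi_1+\widetilde{\varphi}\circ\psi_2$, whence by Brauer-Nesbitt $\rho_{f_{\varphi}}$ would be reducible, a contradiction.

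Finally, property 1 holds because each $\rho_{f_{\varphi}}$ is unramified outside $Np$: for $\ell\nmid Np$ and $\sigma$ in an inertia subgroup at $\ell$ one has $T(\sigma g)=T(g)$ for all $g$ after every $\varphi$, hence in $\mathbb{I}$, which forces the semisimple $\rho_{\mathscr{F}}$ to be unramified at $\ell$. The trace formula in property 2 is the defining property of $T$, and for the determinant one has $\det\rho_{\mathscr{F}}(g)=D(g)$; evaluating at $\mathrm{Frob}_l$ and applying $\varphi$ yields $(\chi_{\zeta_{\varphi}}\chi\omega^{1-k_{\varphi}})(l)\,l^{k_{\varphi}-1}$, and unwinding $\varphi(1+X)=\zeta_{\varphi}u^{k_{\varphi}-2}$ together with the definitions of $\chi_{\zeta_{\varphi}}$, $\omega$ and $s_l$ (via $l=\omega(l)(1+p)^{s_l}$) shows this equals $\varphi\bigl(\chi(l)\,(u(1+X))^{s_l}\bigr)$; as this holds for all $\varphi$, we conclude $\det\rho_{\mathscr{F}}(\mathrm{Frob}_l)=\chi(l)\,(u(1+X))^{s_l}$ in $\mathbb{I}$, and Chebotarev determines $\det\rho_{\mathscr{F}}$. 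The main obstacle, in my view, is the control/integrality step of the two preceding paragraphs: promoting the interpolated trace to a genuinely $\mathbb{I}$-valued pseudo-character and then extracting from it a representation equipped with a $\mathrm{Gal}$-stable $\mathbb{I}$-lattice rather than merely one over $\mathrm{Frac}(\mathbb{I})$; this is precisely where profiniteness of $\mathbb{I}$, Chebotarev density, and the pseudo-representation formalism over the non-field ring $\mathbb{I}$ must be combined with care. (Hida's original argument instead realizes $\rho_{\mathscr{F}}$ directly inside the ordinary part of $\varprojlim_r H^1_{\mathrm{et}}(X_1(Np^{r+1})_{\overline{\mathbf{Q}}},\mathbf{Z}_p)$, which is generically free of rank two over the big Hecke algebra, reading off the traces and determinants of Frobenius from the Eichler-Shimura congruence relations; the interpolation above may be regarded as recovering that representation from its classical specializations.)
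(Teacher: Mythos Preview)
The paper does not prove this theorem; it is quoted as Hida's result \cite[Theorem 2.1]{H2} and used thereafter as a black box, so there is no proof in the paper to compare your attempt against.

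That said, your pseudo-character argument is essentially correct and is the standard modern route to Hida's big representation (Wiles, later formalized by Taylor, Nyssen, Rouquier). The interpolation of traces into $\prod_{\varphi}\mathcal{O}_{\varphi}$, the use of Chebotarev plus compactness of $\mathbb{I}$ to land in $\mathbb{I}$, the verification of the pseudo-character identities by specialization, the reconstruction of a semisimple $\rho_{\mathscr{F}}$ over $\mathrm{Frac}(\mathbb{I})$, the irreducibility argument, and the trace/determinant checks are all fine. One small simplification: the paper already assumes $\mathbb{I}$ integrally closed, so your $\widetilde{\mathbb{I}}$ is just $\mathbb{I}$.

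You correctly identify the one genuinely delicate step, and your treatment of it is the weakest part of the write-up. Saying ``$\mathbb{I}[\rho_{\mathscr{F}}(G)]$ is an $\mathbb{I}$-order'' presupposes exactly what needs proof, namely that this algebra is \emph{finitely generated} over $\mathbb{I}$; integrality of each $\rho_{\mathscr{F}}(g)$ (via Cayley--Hamilton with coefficients $T(g),D(g)\in\mathbb{I}$) does not by itself give this. The clean fix is to invoke the Cayley--Hamilton quotient $\mathbb{I}[G]/\mathrm{CH}(T)$, which is a finite $\mathbb{I}$-algebra by the general theory of pseudo-representations over complete local Noetherian rings, and through which $\rho_{\mathscr{F}}$ factors; any faithful finitely generated module for it furnishes the required stable lattice and hence continuity in the sense of Definition~\ref{lat}. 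As you note parenthetically, Hida's original construction sidesteps this entirely: he builds $\rho_{\mathscr{F}}$ directly inside the ordinary part of $\varprojlim_r H^1_{\mathrm{et}}(X_1(Np^{r+1})_{\overline{\mathbf{Q}}},\mathbf{Z}_p)$, where the integral lattice is present from the outset and the Eichler--Shimura relation supplies the Frobenius traces.
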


We have the following local property due to Mazur and Wiles: 

\begin{thm}[Wiles {\cite[Theorem 2.2.2]{Wi88}}]\label{w}
With the same notations as above, the restriction of $\rho_{\mathscr{F}}$ to the decomposition group $D_p=\mathrm{Gal}\left(\overline{\mathbf{Q}}_p/\mathbf{Q}_p \right)$ is given up to equivalence by 

$$\rho_{\mathscr{F}}\mid_{D_p} \sim \begin{pmatrix} \varepsilon_1 & 0 \\ * & \varepsilon_2 \end{pmatrix}$$with $\varepsilon_1$ unramified and $\varepsilon_1(\mathrm{Frob}_p)=c(p, \mathscr{F})$. 

\end{thm}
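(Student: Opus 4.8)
The plan is to interpolate the classical local structure of $p$-ordinary modular Galois representations over the arithmetic points of $\mathbb{I}$, and then to pin down the precise triangular shape from the trace together with the integral lattice $\mathcal{T}$ of Definition~\ref{lat}.

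First I would recall the classical input. For every $\varphi \in \mathfrak{X}_{\mathbb{I}}$ with $k_\varphi \geq 2$ the eigenform $f_\varphi$ is $p$-ordinary, so by the theorem of Deligne, Mazur--Wiles and Wiles on ordinary modular forms the restriction $\rho_{f_\varphi}\mid_{D_p}$ is, up to conjugation, of the shape $\begin{pmatrix} \eta_{1,\varphi} & 0 \\ * & \eta_{2,\varphi} \end{pmatrix}$ with $\eta_{1,\varphi}$ unramified and $\eta_{1,\varphi}(\mathrm{Frob}_p)=a(p,f_\varphi)=\varphi(c(p,\mathscr{F}))$. Since $a(p,f_\varphi)$ is a $p$-adic unit for all these (Zariski dense) points and $\mathbb{I}$ is local, $c(p,\mathscr{F})\in\mathbb{I}^{\times}$ (equivalently, Hida's $U_p$ acts invertibly on the ordinary part). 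Hence there is a well-defined continuous unramified character $\varepsilon_1 : D_p \rightarrow \mathbb{I}^{\times}$ with $\varepsilon_1(\mathrm{Frob}_p)=c(p,\mathscr{F})$, and $\varphi\circ\varepsilon_1 = \eta_{1,\varphi}$ for every arithmetic $\varphi$, because an unramified character is determined by its value on $\mathrm{Frob}_p$.

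Next I would show $\varepsilon_1$ is a Jordan--H\"older character of $\rho_{\mathscr{F}}\mid_{D_p}$. For $\sigma\in D_p$, applying any arithmetic $\varphi$ to
\[
\varepsilon_1(\sigma)^2 - \mathrm{tr}\,\rho_{\mathscr{F}}(\sigma)\,\varepsilon_1(\sigma) + \det\rho_{\mathscr{F}}(\sigma)
\]
yields $\eta_{1,\varphi}(\sigma)^2 - \mathrm{tr}\,\rho_{f_\varphi}(\sigma)\,\eta_{1,\varphi}(\sigma) + \det\rho_{f_\varphi}(\sigma)=0$, since $\eta_{1,\varphi}(\sigma)$ is an eigenvalue of $\rho_{f_\varphi}(\sigma)$. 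As $\mathbb{I}$ is a domain and the arithmetic points are dense, the displayed element is $0$ in $\mathbb{I}$, so $\varepsilon_1(\sigma)$ is a root of the characteristic polynomial of $\rho_{\mathscr{F}}(\sigma)$ for all $\sigma$. Therefore $\varepsilon_2:=(\det\rho_{\mathscr{F}})\,\varepsilon_1^{-1}$ is again a character of $D_p$, and $\mathrm{tr}\,\rho_{\mathscr{F}}\mid_{D_p}=\varepsilon_1+\varepsilon_2$, $\det\rho_{\mathscr{F}}\mid_{D_p}=\varepsilon_1\varepsilon_2$; i.e.\ the $D_p$-pseudorepresentation of $\rho_{\mathscr{F}}$ is reducible. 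By the reducibility-ideal formalism of Bella\"iche--Chenevier (or just pseudorepresentation theory over the field $\mathrm{Frac}(\mathbb{I})$), $\rho_{\mathscr{F}}\mid_{D_p}$ is then, up to equivalence, triangular with diagonal $(\varepsilon_1,\varepsilon_2)$ in one of the two orders.

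Finally I would fix the order so that $\varepsilon_1$ is the quotient, giving exactly $\begin{pmatrix}\varepsilon_1 & 0 \\ * & \varepsilon_2\end{pmatrix}$. Restricting to the $\mathrm{Gal}(\overline{\mathbf{Q}}/\mathbf{Q})$-stable $\mathbb{I}$-lattice $\mathcal{T}$, the $D_p$-stable $\mathrm{Frac}(\mathbb{I})$-line of the subrepresentation meets $\mathcal{T}$ in a rank-one saturated submodule $\mathcal{T}^{+}$, and $D_p$ acts on $\mathcal{T}/\mathcal{T}^{+}$ by one of $\varepsilon_1,\varepsilon_2$; specializing at arithmetic $\varphi$, $\mathcal{T}/\mathcal{T}^{+}$ reduces to a $D_p$-stable quotient line of $\rho_{f_\varphi}\mid_{D_p}$, which by the classical statement carries the unramified character $\eta_{1,\varphi}=\varphi\circ\varepsilon_1$; since this determines the quotient character on a dense set, the quotient of $\rho_{\mathscr{F}}\mid_{D_p}$ is $\varepsilon_1$. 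The main obstacle is precisely this last step: controlling which diagonal character lands on the quotient at the integral level, especially at arithmetic points where $\varepsilon_1\equiv\varepsilon_2$, so that reduction does not obviously commute with passing to sub/quotient. A cleaner alternative, which is Wiles's original route and avoids the interpolation, is to build $\rho_{\mathscr{F}}$ inside the $\Lambda$-adic ordinary part of the \'etale cohomology of the tower of modular curves and to read the filtration off the connected--\'etale sequence of the associated $p$-divisible group over the ordinary locus, where the unramified quotient appears functorially as the \'etale quotient and the ordering is automatic; I would present the interpolation argument as the main line and note that this geometric construction yields the statement directly at the lattice level.
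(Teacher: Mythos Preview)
The paper does not prove this theorem at all: it is stated with attribution to Wiles \cite[Theorem~2.2.2]{Wi88} and immediately used as input, with no argument given. So there is no ``paper's own proof'' to compare against; you are supplying a proof where the paper offers none.

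Your interpolation argument is correct through the reducibility of the $D_p$-pseudocharacter: density of arithmetic primes forces $\varepsilon_1(\sigma)$ to be a root of the characteristic polynomial of $\rho_{\mathscr{F}}(\sigma)$ for every $\sigma\in D_p$, and over the field $\mathrm{Frac}(\mathbb{I})$ this gives a triangular form with diagonal $\{\varepsilon_1,\varepsilon_2\}$ in \emph{some} order. The genuine gap is precisely where you flag it. Your sentence ``by the classical statement [the specialised quotient] carries the unramified character $\eta_{1,\varphi}$'' is not justified: the classical theorem asserts the \emph{existence} of an unramified rank-one quotient, not that every $D_p$-stable rank-one quotient is unramified. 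When $\rho_{f_\varphi}\!\mid_{D_p}$ is split (as happens at CM specialisations) there are two quotient lines and nothing is forced. Your argument would close if you could exhibit one arithmetic $\varphi$ with $\rho_{f_\varphi}\!\mid_{D_p}$ nonsplit, but proving that independently of the result in question is delicate; the known results on local nonsplitness (Ghate, Ghate--Vatsal) themselves rely on Hida theory and this very filtration, so there is a real danger of circularity.

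You correctly identify the way out: Wiles's proof is geometric, reading the filtration off the connected--\'etale sequence of the ordinary $p$-divisible group in the tower of modular Jacobians, where the unramified character appears functorially as the \'etale quotient and the ordering is never in doubt. That is not merely a ``cleaner alternative'' to your interpolation---it is the actual proof, and your interpolation line, while a good heuristic for the reducibility half, does not by itself settle the ordering.
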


\begin{dfn}\label{residual rep}
For a prime ideal $\mathcal{P}$ of $\mathbb{I}$, a Galois representation $$\rho_{\mathscr{F}}\left(\mathcal{P} \right) : \mathrm{Gal}\left(\overline{\mathbf{Q}}/\mathbf{Q} \right) \rightarrow \mathrm{GL}_2\left(\mathrm{Frac}\left(\mathbb{I}/\mathcal{P} \right) \right)$$ is called a residual representation of $\rho_{\mathscr{F}}$ modulo $\mathcal{P}$ if $\rho_{\mathscr{F}}\left(\mathcal{P} \right)$ is semi-simple, continuous under the $\mathfrak{m}$-adic topology of $\mathrm{Frac}\left(\mathbb{I}/\mathcal{P} \right)$ and satisfies the following properties: 
\begin{list}{}{}
\item[1.]$\rho_{\mathscr{F}}\left(\mathcal{P} \right)$ is unramified outside $Np$.
\item[2.]For the geometric Frobenius element $\mathrm{Frob}_l$ at $l \nmid Np$,
$$\mathrm{tr}\rho_{\mathscr{F}}(\mathcal{P})(\mathrm{Frob}_l)=c(l, \mathscr{F})\ \mathrm{mod}\ \mathcal{P},$$
$$\mathrm{det}\rho_{\mathscr{F}}(\mathcal{P})(\mathrm{Frob}_l)=\chi(l)\left(u\left(1+X \right) \right)^{s_l}\ \mathrm{mod}\ \mathcal{P}.$$
\end{list}
\end{dfn}

Although $\rho_{\mathscr{F}}$ may not have $\mathrm{Gal}(\overline{\mathbf{Q}}/\mathbf{Q})$-stable lattice which is isomorphic to ${\mathbb{I}}^{\oplus 2}$, the following fact is well-known (see {\cite[\S 7.5, Corollary 1]{H3}} for example).

\begin{prp}\label{3.5}
For every prime ideal $\mathcal{P}$, the residual representation $\rho_{\mathscr{F}}(\mathcal{P})$ exists and is unique up to isomorphism over an algebraic closure of $\mathrm{Frac}(\mathbb{I}/\mathcal{P})$. 

\end{prp}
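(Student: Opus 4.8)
The plan is to separate existence from uniqueness, the difficulty in existence being that, as recalled just above, $\rho_{\mathscr{F}}$ need not admit a $\mathrm{Gal}(\overline{\mathbf{Q}}/\mathbf{Q})$-stable lattice free over $\mathbb{I}$, so one cannot simply reduce a matrix form of $\rho_{\mathscr{F}}$ modulo $\mathcal{P}$. Write $G=\mathrm{Gal}(\overline{\mathbf{Q}}/\mathbf{Q})$, $F=\mathrm{Frac}(\mathbb{I}/\mathcal{P})$, and fix a $G$-stable lattice $\mathcal{T}\subset\mathrm{Frac}(\mathbb{I})^{\oplus 2}$ as in Definition \ref{lat}.

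For existence when $\mathcal{P}$ has height at most one, I would localize: $\mathbb{I}_{\mathcal{P}}$ is then a discrete valuation ring with residue field $F$, the module $\mathcal{T}_{\mathcal{P}}=\mathcal{T}\otimes_{\mathbb{I}}\mathbb{I}_{\mathcal{P}}$ is finitely generated and torsion-free, hence free of rank two over it and $G$-stable, and reducing the resulting representation $G\to\mathrm{Aut}_{\mathbb{I}_{\mathcal{P}}}(\mathcal{T}_{\mathcal{P}})$ modulo the maximal ideal and then semisimplifying produces a semisimple continuous representation $G\to\mathrm{GL}_2(F)$; it is unramified outside $Np$ and has the Frobenius traces and determinants of Definition \ref{residual rep} because $\rho_{\mathscr{F}}$ does by Theorem \ref{3.3}, so it may serve as $\rho_{\mathscr{F}}(\mathcal{P})$. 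For a general prime $\mathcal{P}$---in particular for $\mathcal{P}=\mathfrak{m}$, which is the case needed in Theorem \ref{yandong}---I would instead pass to the associated pseudo-representation. First, $T:=\mathrm{tr}\,\rho_{\mathscr{F}}$ and $D:=\det\rho_{\mathscr{F}}$ take values in $\mathbb{I}$: since $\mathbb{I}$ is an integrally closed Noetherian domain it is a Krull domain with $\mathbb{I}=\bigcap_{\mathrm{ht}\,\mathfrak{q}=1}\mathbb{I}_{\mathfrak{q}}$, each $\mathbb{I}_{\mathfrak{q}}$ a discrete valuation ring over which $\mathcal{T}_{\mathfrak{q}}$ is free and $G$-stable, so $T$ and $D$ land in every such $\mathbb{I}_{\mathfrak{q}}$ and hence in $\mathbb{I}$. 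Reducing modulo $\mathcal{P}$ and embedding $\mathbb{I}/\mathcal{P}\hookrightarrow F$ gives a continuous two-dimensional pseudo-representation $(\bar{T},\bar{D})$ over $F$. Since the characteristic of $F$ is $0$ or $p\geq 3$, the integer $2$ is invertible in $F$, so by the theory of pseudo-representations (Wiles, Taylor, Rouquier, Nyssen) $(\bar{T},\bar{D})$ is the trace-and-determinant pair of a semisimple representation, unique up to isomorphism, defined over $\overline{F}$---and over $F$ itself whenever $\mathrm{Br}(F)$ vanishes, e.g. when $F$ is finite, as happens for $\mathcal{P}=\mathfrak{m}$. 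Taking this representation gives the required $\rho_{\mathscr{F}}(\mathcal{P})$: it is semisimple by construction, inherits unramifiedness outside $Np$ and the Frobenius conditions of Definition \ref{residual rep} from $(\bar{T},\bar{D})$, and is continuous for the $\mathfrak{m}$-adic topology because $\bar{T}$ is continuous and $G$ is compact (indeed $\bar{T}$ factors through $\mathrm{Gal}(\mathbf{Q}_S/\mathbf{Q})$ with $S=\{p,\infty\}\cup\{\ell\mid N\}$ finite).

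For uniqueness, suppose $\rho$ and $\rho'$ are two representations satisfying Definition \ref{residual rep} over $\overline{F}$. By Chebotarev density, the conditions on Frobenius traces force $\mathrm{tr}\,\rho=\mathrm{tr}\,\rho'$ on all of $G$ (using continuity to pass from the dense set of Frobenii to all of $G$); since $\overline{F}$ has characteristic $\neq 2$, in dimension two the trace determines the characteristic polynomial, so $\rho$ and $\rho'$ have the same characteristic polynomials, and the Brauer--Nesbitt theorem together with the hypothesis that both are semisimple gives $\rho\cong\rho'$ over $\overline{F}$. This yields uniqueness up to isomorphism over $\overline{\mathrm{Frac}(\mathbb{I}/\mathcal{P})}$, as stated.

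The one genuine obstacle is the possible absence of an $\mathbb{I}$-free stable lattice in $\rho_{\mathscr{F}}$, which is precisely what forces the detour through localizations at height-one primes (where the lattice becomes free over a discrete valuation ring) and, for the remaining primes, through the pseudo-representation; beyond that, the argument is an application of standard facts whose numerical hypotheses (dimension two, residue characteristic $p\geq 3$) are met throughout, and no delicate estimates are involved.
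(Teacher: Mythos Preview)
Your argument is correct, and there is nothing to compare it against: the paper does not prove Proposition~\ref{3.5} at all. It is stated as a well-known fact, with a citation to Hida \cite[\S 7.5, Corollary~1]{H3} in the sentence immediately preceding the statement. What you have written is essentially the standard proof that underlies that reference---showing $\mathrm{tr}\,\rho_{\mathscr{F}}$ and $\det\rho_{\mathscr{F}}$ take values in $\mathbb{I}$ (via the Krull property of the integrally closed domain $\mathbb{I}$), reducing modulo $\mathcal{P}$ to obtain a pseudo-representation, invoking the pseudo-representation theorems of Wiles, Taylor, Nyssen, and Rouquier to produce a semisimple representation, and then appealing to Chebotarev density plus Brauer--Nesbitt for uniqueness.

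One small remark: your split into the cases $\mathrm{ht}\,\mathcal{P}\le 1$ (localize to a DVR) and ``general $\mathcal{P}$'' (pseudo-representations) is in fact exhaustive with no residual Brauer-group obstruction, because $\mathbb{I}$ is finite flat over $\Lambda_\chi$ and hence has Krull dimension two; the only prime not of height $\le 1$ is $\mathfrak{m}$ itself, whose residue field is finite. You implicitly use this when you say the Brauer group vanishes for $\mathcal{P}=\mathfrak{m}$, but it is worth making explicit that no other case remains.
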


\subsection{Kubota-Leopoldt $p$-adic $L$-function}

Now we recall some facts about the Kubota-Leopoldt $p$-adic $L$-function. Let $\psi$ be an arbitrary Dirichlet character. Kubota-Leopoldt (see \cite[\S 3, Theorem 2]{Iwa}) showed that there exists a $p$-adic continuous function $L_p(s, \psi)$ for $s \in \mathbf{Z}_p-\left\{1 \right\}$ (also continuous at $s=1$ if $\psi$ is non-trivial) with the following interpolate property for $k \geq 1$:

$$L_p(1-k, \psi)=(1-\psi\omega^{-k}(p)p^{k-1})L(1-k, \psi\omega^{-k}).$$

Set

\begin{eqnarray*}
H_{\psi}(X) =\left\{ \begin{array}{ll}
\psi(u)(1+X)-1 & \mathrm{if}\ \psi\ \mathrm{factors\ through}\ \mathrm{Gal}\left(\mathbf{Q}_{\infty}/\mathbf{Q} \right). \\
1 & \mathrm{otherwise}. \\
\end{array} \right.
\end{eqnarray*}

Iwasawa \cite[\S 6]{Iwa} showed that there exists a unique power series $G_{\psi}(X) \in \mathbf{Z}_p[\psi][[X]]$ such that 

\begin{list}{}{}
\item[(i)]$L_p(1-s, \psi)=G_{\psi}(u^s-1)/{H_{\psi}(u^s-1)},$
\item[(ii)]if $\rho$ factors through $\mathrm{Gal}\left(\mathbf{Q}_{\infty}/\mathbf{Q} \right)$, then $G_{\psi\rho}(X)=G_{\psi}(\rho(u)(1+X)-1).$

\end{list}

We define
\begin{list}{}{}
\item[]$\hat{G}_{\psi}(X)=G_{\psi\omega}(u^2(1+X)-1),$
\item[]$\hat{H}_{\psi}(X)=H_{\psi\omega}(u^2(1+X)-1)$
\end{list}

for later reference.

\section{Proof of Theorem \ref{yandong} and its corollaries}

\subsection{Calculation of $\sharp \mathcal{L}(\rho)$ by means of $I(\rho)$}
For a given $p$-adic representation $\rho$, recall that $\mathcal{L}(\rho)$ is the set of the isomorphic classes of stable lattices of $\rho$. First we determine $\sharp \mathcal{L}(\rho)$ in this section. The following lemma is proved by Bella\"iche-Chenevier \cite{Bell2}.

\begin{lem}[Bella\"iche-Chenevier {\cite[Lemme 1]{Bell2}}]\label{2.6}
	
Let $(A, \mathfrak{m}, k)$ be a complete local domain such that $\mathrm{char}(k) \neq 2$, where $\mathfrak{m}$ is the maximal ideal of $A$ and $k$ the residue field $A/\mathfrak{m}$. Let $\rho : G \rightarrow \mathrm{GL}_2(\mathrm{Frac}(A))$ be a linear representation of a group $G$ satisfying $\mathrm{tr}\rho(G) \subset A$ and $\mathrm{tr}\rho\ \mathrm{mod}\ \mathfrak{m}=\psi_1 + \psi_2, \psi_1 \neq \psi_2$, where $\psi_1, \psi_2 : G \rightarrow k^{\times}$ are characters. Let $g_0 \in G$ be an element satisfying $\psi_1(g_0) \neq \psi_2(g_0)$ and $\lambda_1, \lambda_2 \in A$  the roots of the characteristic polynomial of $\rho(g_0)$. Choose a basis $\left\{e_1, e_2 \right\}$ of the representation $\rho$ such that $\rho(g_0)e_i=\lambda_i e_i\ (i=1,2)$. Write $\rho(g) = \begin{pmatrix} a(g) & b(g) \\ c(g) & d(g) \end{pmatrix}$ for any $g \in G$. 
	
Let $I \subsetneq A$ be an ideal such that there exist two characters $\vartheta_1, \vartheta_2 : G \rightarrow (A/I)^{\times}$ such that $$\mathrm{tr}\rho(g)\ \mathrm{mod}\ I = \vartheta_1(g) + \vartheta_2(g)$$ for any $g \in G$. Assume $\vartheta_1\ \mathrm{mod}\ \mathfrak{m}=\psi_1, \vartheta_2\ \mathrm{mod}\ \mathfrak{m}=\psi_2$ without loss of generality. Then for any $g, g^{\prime} \in G$, we have $a(g), d(g) \in A$, $a(g)\ \mathrm{mod}\ I=\vartheta_1(g), d(g)\ \mathrm{mod}\ I=\vartheta_2(g)$, and $b(g)c(g^{\prime}) \in I$. 
\end{lem}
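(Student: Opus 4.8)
The plan is to run the classical elementary argument underlying Ribet's lemma, organized into four steps: pin down the eigenvalues $\lambda_1,\lambda_2$ of $\rho(g_0)$, push the diagonal entries of $\rho(g)$ into $A$, identify their reductions modulo $I$, and finally annihilate the products $b(g)c(g')$ modulo $I$. First I would note that $2\in A^{\times}$ (since $A$ is local and $\mathrm{char}(k)\neq 2$), so that the Cayley--Hamilton identity for $2\times 2$ matrices gives $\det\rho(g)=\tfrac12\bigl((\mathrm{tr}\rho(g))^2-\mathrm{tr}\rho(g^2)\bigr)\in A$ for every $g\in G$. In particular the characteristic polynomial $P(X)=X^2-\mathrm{tr}\rho(g_0)X+\det\rho(g_0)$ of $\rho(g_0)$ lies in $A[X]$ and reduces modulo $\mathfrak m$ to $(X-\psi_1(g_0))(X-\psi_2(g_0))$, a product of two monic linear factors coprime in $k[X]$. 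As $A$ is complete local, Hensel's lemma yields $P(X)=(X-\lambda_1)(X-\lambda_2)$ with $\lambda_i\in A$ and $\lambda_i\equiv\psi_i(g_0)\ \mathrm{mod}\ \mathfrak m$, hence $\lambda_1-\lambda_2\in A^{\times}$; this legitimizes the diagonalizing basis $\{e_1,e_2\}$ of the statement, in which $\rho(g_0)=\mathrm{diag}(\lambda_1,\lambda_2)$.

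Next, for any $g\in G$ the two quantities $\mathrm{tr}\rho(g)=a(g)+d(g)$ and $\mathrm{tr}\rho(g_0g)=\lambda_1 a(g)+\lambda_2 d(g)$ lie in $A$ by hypothesis, and since the coefficient matrix of this linear system has determinant $\lambda_2-\lambda_1\in A^{\times}$, this forces $a(g),d(g)\in A$. To reduce modulo $I$ I would first check that $\lambda_i\equiv\vartheta_i(g_0)\ \mathrm{mod}\ I$: applying the congruence $\mathrm{tr}\rho\equiv\vartheta_1+\vartheta_2\ \mathrm{mod}\ I$ to $g_0$ and to $g_0^2$ (together with the determinant formula above) shows that $P(X)$ reduces modulo $I$ both to $(X-\lambda_1)(X-\lambda_2)$ and to $(X-\vartheta_1(g_0))(X-\vartheta_2(g_0))$; since $I\subseteq\mathfrak m$, these are two factorizations of the same monic quadratic over the complete local ring $A/I$ into factors with distinct residues mod $\mathfrak m$, so the uniqueness part of Hensel's lemma identifies them, and the labelling is pinned down by the reductions mod $\mathfrak m$. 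Reducing the two trace identities modulo $I$ and inverting the same coefficient matrix then gives $a(g)\equiv\vartheta_1(g)$ and $d(g)\equiv\vartheta_2(g)\ \mathrm{mod}\ I$.

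For the off-diagonal statement, given $g,g'\in G$ I would expand the entries of $\rho(g)\rho(g')$ to obtain
\[
\mathrm{tr}\rho(gg')=a(g)a(g')+d(g)d(g')+b(g)c(g')+b(g')c(g),
\]
\[
\mathrm{tr}\rho(g_0gg')=\lambda_1\bigl(a(g)a(g')+b(g)c(g')\bigr)+\lambda_2\bigl(d(g)d(g')+b(g')c(g)\bigr).
\]
Since the diagonal entries already lie in $A$, solving this $2\times 2$ linear system (once more with the matrix of determinant $\lambda_2-\lambda_1$) first shows $b(g)c(g'),\,b(g')c(g)\in A$. Substituting next the congruences $a(g)\equiv\vartheta_1(g)$, $d(g)\equiv\vartheta_2(g)$, $\lambda_i\equiv\vartheta_i(g_0)$ together with $\mathrm{tr}\rho(gg')\equiv\vartheta_1(g)\vartheta_1(g')+\vartheta_2(g)\vartheta_2(g')$ and $\mathrm{tr}\rho(g_0gg')\equiv\vartheta_1(g_0)\vartheta_1(g)\vartheta_1(g')+\vartheta_2(g_0)\vartheta_2(g)\vartheta_2(g')$ modulo $I$, the two relations above collapse to the homogeneous system $b(g)c(g')+b(g')c(g)\equiv0$ and $\lambda_1 b(g)c(g')+\lambda_2 b(g')c(g)\equiv0$ modulo $I$; inverting the matrix a final time forces $b(g)c(g')\in I$ (and symmetrically $b(g')c(g)\in I$).

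I expect the only genuinely delicate point to be the bookkeeping in the last step: one must make sure that every trace invoked is the trace of an element of $G$, hence a priori in $A$, and that the labelling $\psi_i\leftrightarrow\lambda_i\leftrightarrow\vartheta_i$ stays consistent throughout. By contrast the structural input is light: completeness of $A$ is used only through the two applications of Hensel's lemma (for $\rho(g_0)$ over $A$ and over $A/I$), and the hypothesis $\mathrm{char}(k)\neq 2$ only through the invertibility of $2$ that lets one recover $\det\rho(g)$ from traces.
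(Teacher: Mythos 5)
Your proof is correct, and it is worth noting that the paper itself does not prove this lemma (it is quoted from Bella\"iche--Chenevier, Lemme 1 of \cite{Bell2}); your argument is exactly the standard one behind that reference: recover $\det\rho(g)\in A$ from traces using $2\in A^{\times}$, apply Hensel's lemma to the characteristic polynomial of $\rho(g_0)$ to get $\lambda_1,\lambda_2\in A$ with $\lambda_1-\lambda_2\in A^{\times}$, and then repeatedly invert the $2\times 2$ system with determinant $\lambda_2-\lambda_1$, first on $\bigl(\mathrm{tr}\rho(g),\mathrm{tr}\rho(g_0g)\bigr)$ to get $a(g),d(g)\in A$ and their reductions, then on $\bigl(\mathrm{tr}\rho(gg'),\mathrm{tr}\rho(g_0gg')\bigr)$ to force $b(g)c(g')\in I$. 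One minor simplification: the identification $\lambda_i\equiv\vartheta_i(g_0)\ \mathrm{mod}\ I$ needs neither Hensel's lemma nor completeness of $A/I$ (which is not obvious if $A$ is not assumed Noetherian) --- from the two factorizations one gets $(\bar\lambda_1-\vartheta_1(g_0))(\bar\lambda_1-\vartheta_2(g_0))=0$ in $A/I$, and since $\bar\lambda_1-\vartheta_2(g_0)$ is a unit in the local ring $A/I$ (its residue is $\psi_1(g_0)-\psi_2(g_0)\neq 0$), this already gives $\bar\lambda_1=\vartheta_1(g_0)$, and likewise $\bar\lambda_2=\vartheta_2(g_0)$.
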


\begin{rem}\label{3.2} If $\mathrm{char}(k)=2$, the statement holds assuming an extra condition on the determinate (cf. \cite[Lemme 1]{Bell2}).

\end{rem}

\begin{dfn}\label{3.2.2}
Let $(A, \mathfrak{m}, k)$ be a complete local domain such that $\mathrm{char}(k) \neq 2$, where $\mathfrak{m}$ is the maximal ideal of $A$ and $k$ the residue field. Let $\rho : G \rightarrow \mathrm{GL}_2(\mathrm{Frac}(A))$ be a linear representation of a  group $G$ satisfying $\mathrm{tr}\rho(G) \subset A$ and $\mathrm{tr}\rho\ \mathrm{mod}\ \mathfrak{m}=\psi_1 + \psi_2, \psi_1 \neq \psi_2$, where $\psi_1, \psi_2 : G \rightarrow k^{\times}$ are characters.  For any $g \in G$, write $\rho(g) = \begin{pmatrix} a(g) & b(g) \\ c(g) & d(g) \end{pmatrix}$ with respect to the basis taken as in Lemma \ref{2.6}. We define $I(\rho)$ the ideal of $A$ which is generated by $b(g)c(g^{\prime})$ for all $g, g^{\prime} \in G$.

\end{dfn}

The ideal $I(\rho)$ is well-defined by Lemma \ref{2.6}. Under the above preparation, we are ready to determine $\sharp\mathcal{L}(\rho)$ of a $p$-adic representation $\rho$.

\begin{prp}\label{2.8}
Let $\left(\mathcal{O}, \varpi, \mathcal{O}/\left(\varpi \right) \right)$ be the ring of integers of a finite extension of $\mathbf{Q}_p$, where $\varpi$ is a fixed uniformizer of $\mathcal{O}$. Let $V$ be a vector space of dimension 2 over $K=\mathrm{Frac}(\mathcal{O})$ and $\rho : G \rightarrow \mathrm{Aut}_K(V)$ a continuous irreducible representation of a compact group $G$. 

Assume that $$\mathrm{tr}\rho\ \mathrm{mod}\ \mathfrak{\varpi}=\psi_1 + \psi_2, \psi_1 \neq \psi_2,$$
where $\psi_1, \psi_2 : G \rightarrow (\mathcal{O}/\varpi)^{\times}$ are characters. Then we have 
$$\mathrm{ord}_{\varpi} I(\rho) +1=\sharp \mathcal{X}(\rho)=\sharp \mathcal{L}(\rho).$$
\end{prp}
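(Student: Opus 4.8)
The plan is to transfer everything to the Bruhat--Tits tree $\mathcal{X}$ and then to compute the length of the segment $\mathcal{X}(\rho)$ directly from the reducibility ideal $I(\rho)$. The equality $\sharp\mathcal{X}(\rho)=\sharp\mathcal{L}(\rho)$ is just the identification of homothety classes of $G$-stable lattices with their isomorphism classes: homothetic lattices are obviously isomorphic as $\mathcal{O}[G]$-modules, and an $\mathcal{O}[G]$-isomorphism between $G$-stable lattices extends to a $K[G]$-automorphism of $V$, hence is a scalar by Schur's lemma for the irreducible $\rho$ (this is the identification already invoked via \cite{Bell3}). So the real content is $\sharp\mathcal{X}(\rho)=\mathrm{ord}_{\varpi}I(\rho)+1$. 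Here I would start from Proposition~\ref{2.c}(6): $\mathcal{X}(\rho)$ is a segment; moreover, since $\bar\rho^{\mathrm{ss}}\cong\psi_1\oplus\psi_2$ is reducible, no $\bar\rho_T$ can be irreducible, so by Proposition~\ref{2.c}(2)--(3) every vertex has one or two neighbours in $\mathcal{X}(\rho)$ and the two endpoints of the segment are exactly the classes $[T]$ with $\bar\rho_T$ reducible indecomposable. Thus it suffices to show that this segment has length $\mathrm{ord}_{\varpi}I(\rho)$.

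For the lower bound I would apply Lemma~\ref{2.6} with $A=\mathcal{O}$ (the traces of $\rho$ lie in $\mathcal{O}$ because a $G$-stable lattice exists): one gets an element $g_0$, a basis $\{e_1,e_2\}$ diagonalising $\rho(g_0)$, and entry functions $a,b,c,d$ with $a(g),d(g)\in\mathcal{O}$ and $I(\rho)=(b(g)c(g')\colon g,g'\in G)$. Since $G$ is compact $\rho(G)$ is bounded, so $\mathrm{ord}_\varpi b$ and $\mathrm{ord}_\varpi c$ are bounded below; since $\rho$ is irreducible neither $b$ nor $c$ vanishes identically. Put $\mu=\min_g\mathrm{ord}_\varpi b(g)$, $\nu=\min_{g'}\mathrm{ord}_\varpi c(g')$, so $m:=\mathrm{ord}_\varpi I(\rho)=\mu+\nu$, and applying Lemma~\ref{2.6} with $I=(\varpi)$ gives $b(g)c(g')\in(\varpi)$, hence $m\geq 1$. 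Writing $\rho$ in the basis $\{e_1,\varpi^{j}e_2\}$, one checks at once that $T_j:=\mathcal{O}e_1\oplus\varpi^{j}\mathcal{O}e_2$ is $G$-stable exactly for $-\mu\leq j\leq\nu$, and that $[T_{-\mu}]-[T_{-\mu+1}]-\cdots-[T_{\nu}]$ is a path of length $m$ inside $\mathcal{X}(\rho)$. Hence $\sharp\mathcal{X}(\rho)\geq m+1$.

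For the reverse inequality it is enough to prove that $[T_{-\mu}]$ and $[T_{\nu}]$ are the two endpoints of $\mathcal{X}(\rho)$, for then $\mathcal{X}(\rho)$ is precisely the path above. By the criterion recalled in the first paragraph this means showing $\bar\rho_{T_{-\mu}}$ and $\bar\rho_{T_{\nu}}$ are indecomposable. Now $\varpi^{\mu}c$ has valuation $\geq m\geq 1$, so $\bar\rho_{T_{-\mu}}$ is visibly upper-triangular with diagonal $(\psi_1,\psi_2)$; the point is that it is non-split. I would argue by contradiction: if it split, the second $G$-stable line lifts to a vector $v$ with $\{e_1,v\}$ an $\mathcal{O}$-basis of $T_{-\mu}$ in which $\rho$ is diagonal modulo $\varpi$, and then $\mathcal{O}\varpi^{-1}e_1\oplus\mathcal{O}v$ and $\mathcal{O}e_1\oplus\mathcal{O}\varpi^{-1}v$ are two $G$-stable lattices that are neighbours of $[T_{-\mu}]$, distinct from one another and from $[T_{-\mu+1}]$; thus $[T_{-\mu}]$ would have at least three neighbours in $\mathcal{X}(\rho)$, contradicting that $\mathcal{X}(\rho)$ is a segment. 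Hence $\bar\rho_{T_{-\mu}}$ is reducible indecomposable and $[T_{-\mu}]$ is an endpoint; the symmetric argument (exchanging $e_1\leftrightarrow e_2$, $b\leftrightarrow c$, $\psi_1\leftrightarrow\psi_2$, $\mu\leftrightarrow\nu$) handles $[T_{\nu}]$. A segment has exactly two endpoints and $[T_{-\mu}]\neq[T_{\nu}]$ (their distance is $m\geq 1$), so $\mathcal{X}(\rho)=\{[T_{-\mu}],\dots,[T_{\nu}]\}$ and $\sharp\mathcal{X}(\rho)=m+1$.

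The step I expect to be the main obstacle is the non-splitting of the extremal reductions $\bar\rho_{T_{-\mu}}$ and $\bar\rho_{T_{\nu}}$: minimality of $\mu$ only forces the upper-right entry of $\bar\rho_{T_{-\mu}}$ to be nonzero modulo $\varpi$, not that the associated extension class is nonzero, so a purely local computation does not suffice and one has to exploit the global segment structure of $\mathcal{X}(\rho)$ supplied by Proposition~\ref{2.c}.
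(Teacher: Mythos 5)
Your reduction of everything to the identity $\sharp\mathcal{X}(\rho)=m+1$ with $m=\mu+\nu=\mathrm{ord}_{\varpi}I(\rho)$, and the lower bound via the chain $T_{-\mu},\dots,T_{\nu}$, are fine, but the decisive step — that $[T_{-\mu}]$ and $[T_{\nu}]$ are the endpoints of the segment — is exactly where your argument breaks, and it breaks on a false assertion. If $\bar{\rho}_{T_{-\mu}}$ split and $v$ lifts the second stable line, then since $\{e_1,v\}$ is an $\mathcal{O}$-basis of $T_{-\mu}$ one has $\varpi\left(\mathcal{O}\varpi^{-1}e_1\oplus\mathcal{O}v\right)=\mathcal{O}e_1+\varpi T_{-\mu}=\mathcal{O}e_1\oplus\varpi^{1-\mu}\mathcal{O}e_2=T_{-\mu+1}$, so $[\mathcal{O}\varpi^{-1}e_1\oplus\mathcal{O}v]=[T_{-\mu+1}]$: your ``three'' neighbours are only two, which is precisely what (4) of Proposition \ref{2.c} predicts for a decomposable reduction. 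There is no conflict with $\mathcal{X}(\rho)$ being a segment — it would merely mean the segment continues past $[T_{-\mu}]$ — so the upper bound $\sharp\mathcal{X}(\rho)\le m+1$ is not established and the proof is incomplete at its main point.

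The step is true and can be repaired, but by a local computation at $g_0$, not by the global segment structure (so your closing diagnosis is inverted). In the basis $\{e_1,\varpi^{-\mu}e_2\}$ the element $\bar{\rho}_{T_{-\mu}}(g_0)$ is diagonal with distinct eigenvalues $\psi_1(g_0)\neq\psi_2(g_0)$, so the only candidate for a $G$-stable complement of $k\bar{e}_1$ is the line $k\overline{\varpi^{-\mu}e_2}$; its stability would force $b(g)\equiv 0\ \mathrm{mod}\ \varpi^{\mu+1}$ for all $g$, contradicting the minimality of $\mu$ (equivalently: the reduced cocycle vanishes at $g_0$ because $b(g_0)=0$, while the coboundary $\psi_1-\psi_2$ does not). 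The paper avoids identifying the endpoints altogether: it takes the two ends of the segment, picks representatives $T_m\subset T$ with $T/T_m\cong\mathcal{O}/\left(\varpi\right)^m$, notes that then $a\ \mathrm{mod}\ \varpi^m$ and $d\ \mathrm{mod}\ \varpi^m$ are characters, and applies Lemma \ref{2.6} with $I=\left(\varpi\right)^m$ to get $I(\rho)\subset\left(\varpi\right)^m$. Two smaller points: your appeal to Schur's lemma for $\sharp\mathcal{X}(\rho)=\sharp\mathcal{L}(\rho)$ needs the additional (true, easy) fact that $\mathrm{End}_{K[G]}(V)=K$ — bare irreducibility only gives a division algebra; here it follows since $\rho(g_0)$ has distinct eigenvalues and $b,c\not\equiv 0$ — whereas the paper proves this equality by a longer combinatorial argument with Ribet's lemma and distances, so your route there, once justified, is a genuine simplification.
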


This proposition is a special case of Bella\"iche-Graftieaux \cite[Th\'eor\`eme 4.1.3]{BG} (see also the remark immediately after it), but we give the proof here for self-containing.

\begin{proof}
	
	We first show $\mathrm{ord}_{\varpi} I(\rho) +1=\sharp \mathcal{X}(\rho)$. Fix a $g_0 \in G$ such that $\psi_1(g_0) \neq \psi_2(g_0)$. The characteristic polynomial of $\rho(g_0)$ $$X^2-\mathrm{tr}\rho(g_0)X+\mathrm{det}\rho(g_0)$$ has roots $\lambda_1 \neq \lambda_2$ in $A$ such that $\lambda_i\ \mathrm{mod}\ \varpi=\psi_i(g_0) (i=1, 2)$ by Hensel's lemma. Choose a basis $\left\{e_1, e_2 \right\}$ of the representation $\rho$ such that $\rho(g_0)e_i=\lambda_i e_i\ (i=1,2)$. Write $\rho(g) = \begin{pmatrix} a(g) & b(g) \\ c(g) & d(g) \end{pmatrix}$ for all $g \in G$. Let $B$ be the module of $\mathcal{O}$ generated by $b(g)$ for all $g \in G$. Since $\rho$ is irreducible, we have $B \neq \left(0 \right)$. Since $\rho$ is continuous and $G$ is compact, there exists a stable lattice. This implies $B=(\varpi)^{r}$ for an integer $r$. 

If we replace $\rho$ by $\begin{pmatrix} 1 & 0 \\ 0 & \varpi^r \end{pmatrix}\rho\begin{pmatrix} 1 & 0 \\ 0 & \varpi^r \end{pmatrix}^{-1}$ and we denote by the same symbol $\rho$ for this new representation. Then we have the following properties for the new $\rho$:

 \begin{list}{}{}
		
		\item[(1)]$\rho$ takes values in $\mathrm{GL}_2(\mathcal{O})$ by Lemma \ref{2.6}.
		\item[(2)]$\rho(g_0)=\begin{pmatrix} \lambda_1 & 0 \\ 0 & \lambda_2 \end{pmatrix}$.
		\item[(3)]For any $g \in G$, write $\rho(g) = \begin{pmatrix} a(g) & b(g) \\ c(g) & d(g) \end{pmatrix}.$ There exists a $h \in G$ such that $\varpi \nmid b(h)$.
		
	\end{list}
	
We denote by the same symbol $B$ (resp. $C$) the ideal of $\mathcal{O}$ which is generated by new $b(g)$ (resp. new $c(g)$) for all $g \in G$. Since $BC=I(\rho)$ by Lemma \ref{2.6} and $B=\mathcal{O}$ by (1), we must have $C=I(\rho)=(\varpi)^n$ for a positive integer $n$. This also means that we have chosen a stable lattice $T$ such that $$\rho=\rho_{T} : G \rightarrow \mathrm{GL}_2(\mathcal{O})$$ and $T/\varpi T$ is not semi-simple. By reduction mod $\left(\varpi \right)^i\ (i=1, 2, \ldots , n)$, we obtain the $G$-stable lattices $T_1, \cdots, T_n$ such that $[T_i] \neq [T_j]$ if $i \neq j$. Then $n+1=\mathrm{ord}_{\varpi} I(\rho) +1 \leq \sharp \mathcal{X}(\rho)$. 
	
Let $\sharp \mathcal{X}(\rho)=m+1$. We have $\mathcal{X}(\rho)$ is a segment $[x, x_m]$ by (6) of Proposition \ref{2.c}. Let $T, T_m$ be the representatives of $x, x_m$ such that $T_m \subset T$ and $T/T_m \cong \mathcal{O}/\left(\varpi \right)^m$ as an $\mathcal{O}$-module. Hence there exists a basis of $T$ such that $\rho_{T} : G \rightarrow \mathrm{GL}_2(\mathcal{O}), g \mapsto \begin{pmatrix} a(g) & b(g) \\ c(g) & d(g) \end{pmatrix}$ satisfies $\varpi^m\mid c(g)$ for any $g \in G$. Then $$a\ \mathrm{mod}\ \varpi^m : G \rightarrow \left(\mathcal{O}/\left(\varpi \right)^m \right)^{\times}, g \mapsto a(g)\ \mathrm{mod}\ \varpi^m$$ and $$d\ \mathrm{mod}\ \varpi^m : G \rightarrow \left(\mathcal{O}/\left(\varpi \right)^m\right)^{\times}, g \mapsto d(g)\ \mathrm{mod}\ \varpi^m$$are two characters. Thus $I\left(\rho \right) \subset \left(\varpi \right)^m$ by Lemma \ref{2.6} and $\sharp \mathcal{X}(\rho) \leq \mathrm{ord}_{\varpi} I(\rho) +1$.

	Next we prove $\sharp \mathcal{X}(\rho)=\sharp \mathcal{L}(\rho)$. Suppose $\sharp \mathcal{X}(\rho)=n+1$. Since $\mathcal{X}\left(\rho \right)$ is a segment, by Section 2.1 there exist $T_0, T_1, \ldots , T_n$ the representatives of the points in $\mathcal{X}\left(\rho \right)$ such that
   \begin{list}{}{}
		\item[(i)] $[T_i]$ is a neighbor of $[T_{i-1}]$ and $T_0/T_i \cong \mathcal{O}/\left(\varpi \right)^i$ as an $\mathcal{O}$-module for $i=1, \cdots, n$.
		\item[(ii)] $T_0, T_n$ are mod $\varpi$ not semi-simple lattices and the others are not.

	\end{list}
Thus it is sufficient to show that for $i \neq j$, $T_i$ and $T_j$ are non-isomorphic to each other as $\mathcal{O}[G]$-modules. 
	
\begin{list}{}{}
\item[1.] Suppose we have $f : T_0 \stackrel{\sim}{\rightarrow} T_n$ as $\mathcal{O}[G]$-modules. Then $\varpi T_n \subset f\left(T_1 \right) \subset T_n$ since $[T_1]$ is a neighbor of $[T_0]$. Since $T_n$ is a mod $\varpi$ not semi-simple lattice, we have $f\left(T_1 \right)=\varpi T_{n-1}$ by (3) of Proposition \ref{2.c}. Hence $$T_1/\varpi T_0 \cong \varpi T_{n-1}/\varpi T_n \cong \mathcal{O}/\left(\varpi \right)[\psi_1]\ (\mathrm{resp}.\ \mathcal{O}/\left(\varpi \right)[\psi_2])$$

as an $\mathcal{O}[G]$-module. Thus there is no mod $\varpi$ not semi-simple stable lattice $T$ such that $T/\varpi T$ has a submodule which is isomorphic to $\mathcal{O}/\left(\varpi \right)[\psi_2]$ (resp. $\mathcal{O}/\left(\varpi \right)[\psi_1]$). This contradicts to the Ribet's lemma (Proposition \ref{1.2}).

\item[2.]Suppose we have $f : T_i \stackrel{\sim}{\rightarrow} T_j$ as $\mathcal{O}[G]$-modules for some $0<i < j<n$.  Since $T_0$ is a mod $\varpi$ not semi-simple lattice and $T_0, T_n$ are non-isomorphic as $\mathcal{O}[G]$-modules, we have $f\left(\varpi^i T_0 \right)=\varpi^l T_0$. Hence $$\mathcal{O}/\left(\varpi \right)^i \cong T_0/T_i \cong T_0/\varpi^{i-l}T_j$$as an $\mathcal{O}$-module. This implies $d\left([T_0], [T_j] \right)=i$. On the other hand, $[T_0]$ is an edge of the segment $\mathcal{X}\left(\rho \right)$, there exists an unique point $y \in \mathcal{X}\left(\rho \right)$ such that $d\left([T_0], y \right)=i$. This contradicts to $i \neq j$. 
		
\end{list}
\end{proof}

Now we give an example by using  Proposition \ref{2.8} to determine $\sharp \mathcal{L}(\rho_f)$, where $\rho_f$ is the Galois representation attached to a normalized Hecke eigen cusp form $f$. Let $\Delta \in S_{12}(\mathrm{SL}_2(\mathbf{Z}))$ be the Ramanujan's cusp form, whose $q$-expansion is equal to $q\displaystyle \prod^{\infty}_{n=1}(1-q^n)^{24}=\displaystyle \sum^{\infty}_{n=1}\tau(n)q^n $ and $$\rho_{\Delta} : \mathrm{Gal}\left(\overline{\mathbf{Q}}/\mathbf{Q} \right) \rightarrow \mathrm{GL}_2(\mathbf{Q}_p)$$ the Galois representation attached to $\Delta$.

\begin{prp}\label{2.9}
	
	The ideal $I(\rho_{\Delta}) \subset \mathbf{Z}_p$ defined as in Definition \ref{3.2.2} is the minimal ideal such that there exists an integer $a \in \mathbf{Z}$ such that for any prime $l \neq p$, 
	$$\tau(l) \equiv l^a + l^{11-a}\ \mathrm{mod}\ I(\rho_{\Delta}).$$
	
\end{prp}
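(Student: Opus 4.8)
The plan is to identify $I(\rho_\Delta)$ with the ``reducibility ideal'' attached to the trace of $\rho_\Delta$ and then use the known congruence $\tau(l)\equiv 1+l^{11}\pmod{691}$ together with the structure of the relevant Hecke algebra. First I would recall from Lemma~\ref{2.6} and Definition~\ref{3.2.2} that for a prime ideal $I\subsetneq\mathbf{Z}_p$ the condition ``$\mathrm{tr}\rho_\Delta\ \mathrm{mod}\ I$ splits as a sum of two characters $\vartheta_1+\vartheta_2$'' is \emph{equivalent} to $I\supset I(\rho_\Delta)$: the ``only if'' direction is precisely the content of Lemma~\ref{2.6} (applied with $A=\mathbf{Z}_p$, which has $\mathrm{char}(k)=\mathrm{char}(\mathbf{F}_p)\neq 2$ since $p=691$), while the ``if'' direction follows because modulo $I(\rho_\Delta)$ the off-diagonal product $b(g)c(g')$ vanishes, so $a(g)\ \mathrm{mod}\ I(\rho_\Delta)$ and $d(g)\ \mathrm{mod}\ I(\rho_\Delta)$ are the desired characters $\vartheta_1,\vartheta_2$ (multiplicativity of $a$ and $d$ modulo $I(\rho_\Delta)$ being exactly the diagonal content of Lemma~\ref{2.6}). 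Since $\mathbf{Z}_p$ is a DVR, the ideals containing $I(\rho_\Delta)$ form a finite chain, so $I(\rho_\Delta)$ is the \emph{minimal} ideal with this splitting property.

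Next I would pin down what the characters $\vartheta_1,\vartheta_2$ must be. Since $\rho_\Delta$ is unramified outside $p$ and $\det\rho_\Delta=\chi_{\mathrm{cyc}}^{11}$, any character quotient of $\mathrm{Gal}(\overline{\mathbf{Q}}/\mathbf{Q})$ appearing here factors through the Galois group of the maximal abelian extension unramified outside $p$, and the reduction mod $\varpi$ of $\rho_\Delta$ is (by Ribet's construction / the congruence with the Eisenstein series $E_{12}$) the sum of the trivial character and $\bar\chi_{\mathrm{cyc}}^{11}=\omega^{11}$. Lifting: any $\vartheta_1\oplus\vartheta_2$ with $\vartheta_1\vartheta_2=\chi_{\mathrm{cyc}}^{11}$, $\vartheta_1\equiv 1$, $\vartheta_2\equiv\omega^{11}$, and each $\vartheta_i$ unramified outside $p$, must by the Kronecker--Weber description of such characters be of the form $\vartheta_1=\chi_{\mathrm{cyc}}^a$, $\vartheta_2=\chi_{\mathrm{cyc}}^{11-a}$ for a $p$-adic integer $a$; evaluating at $\mathrm{Frob}_l$ (geometric Frobenius, so $\chi_{\mathrm{cyc}}(\mathrm{Frob}_l)=l^{-1}$ or $l$ depending on normalization — I would fix the convention so that $\det\rho_\Delta(\mathrm{Frob}_l)=l^{11}$ matches Theorem~\ref{3.3}, giving $\chi_{\mathrm{cyc}}^a(\mathrm{Frob}_l)=l^a$) turns the splitting of the trace into exactly the congruence $\tau(l)\equiv l^a+l^{11-a}\pmod I$ for all $l\neq p$. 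Conversely, if such an $a\in\mathbf{Z}$ and ideal $I$ exist, then by Chebotarev the two characters $\mathrm{Frob}_l\mapsto l^a$ and $\mathrm{Frob}_l\mapsto l^{11-a}$ define a splitting of $\mathrm{tr}\rho_\Delta\ \mathrm{mod}\ I$, so $I\supset I(\rho_\Delta)$.

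Combining the two paragraphs: $I(\rho_\Delta)$ is contained in every ideal $I$ for which some integer $a$ makes the displayed congruence hold, and $I(\rho_\Delta)$ itself is such an ideal (take $a=0$, with $\vartheta_1=1$, $\vartheta_2=\chi_{\mathrm{cyc}}^{11}$, which reduce correctly mod $\varpi$ and split $\mathrm{tr}\rho_\Delta\ \mathrm{mod}\ I(\rho_\Delta)$ by the argument above). Hence $I(\rho_\Delta)$ is the minimal such ideal, which is the assertion.

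The main obstacle I anticipate is the careful bookkeeping in the middle step: one must check that the two characters $\vartheta_1,\vartheta_2$ produced by Lemma~\ref{2.6} modulo a splitting ideal $I$ are genuinely \emph{of cyclotomic type} $\chi_{\mathrm{cyc}}^a,\chi_{\mathrm{cyc}}^{11-a}$ with $a$ an honest \emph{integer} (not merely a $p$-adic integer), so that the congruence can be written with $l^a+l^{11-a}$. This uses that the Galois group of the maximal pro-$p$ abelian extension of $\mathbf{Q}$ unramified outside $p$ is topologically generated by a Frobenius together with tame inertia, that $\vartheta_1$ has finite prime-to-$p$ order forced by $\vartheta_1\equiv 1\pmod\varpi$ and $\vartheta_1\vartheta_2=\chi_{\mathrm{cyc}}^{11}$, and the fact that $p-1=690$ is invertible on the $p$-part — together these force the tame part to be a power of $\omega$ compatible with the weight, pinning $a$ down as an integer mod $p-1$, and the choice of representative $a$ is immaterial for the congruence since $l^{p-1}\equiv 1\pmod p$. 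Apart from this, and the routine normalization of the cyclotomic character vs.\ geometric Frobenius, everything is a direct application of Lemma~\ref{2.6}, Chebotarev density, and the fact that $\mathbf{Z}_p$ is a DVR.
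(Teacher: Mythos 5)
Your argument is correct and follows essentially the same route as the paper's own proof: realize the mod-$I$ diagonal entries as characters, use class field theory (Kronecker--Weber) to see that any such character, being unramified outside $p$, factors through $\mathrm{Gal}(\mathbf{Q}(\mu_{p^{\infty}})/\mathbf{Q})$ and is therefore an integer power of the cyclotomic character modulo $I$, and obtain minimality from Lemma \ref{2.6} together with the Chebotarev density theorem. The only blemish is that you hard-code the residual characters as $\mathbf{1}$ and $\omega^{11}$ and the parenthetical choice $a=0$, which is specific to $p=691$ (the proposition is also invoked for $p=3,5,7$); since the statement only asserts existence of some integer $a$, your argument goes through verbatim with the appropriate powers of $\omega$, exactly as in the paper.
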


\begin{proof}
	
	We denote by $\mathbf{Q}_{\left\{p, \infty \right\}}$ be the maximal Galois extension of $\mathbf{Q}$ which is unramified outside $\set{p, \infty}$ and by $G_{\left\{p, \infty \right\}}=\mathrm{Gal}\left(\mathbf{Q}_{\left\{p, \infty \right\}}/\mathbf{Q} \right)$. Since $\rho_{\Delta}$ is unramified outside $\set{p, \infty}$, $\rho_{\Delta}$ must factor through $G_{\left\{p, \infty \right\}}$.

Let $\vartheta_1, \vartheta_2 : G_{\left\{p, \infty \right\}} \rightarrow (\mathbf{Z}_p/I(\rho_{\Delta}))^{\times}$be the character such that $$\mathrm{tr}\rho\ \mathrm{mod}\ I(\rho_{\Delta})=\vartheta_1+\vartheta_2.$$ 
Since $\rho_{\Delta}$ is unramified outside $p$, $\vartheta_1$ and $\vartheta_2$ must factor through $\mathrm{Gal}\left(\mathbf{Q}\left(\mu_{p^{\infty}} \right)/\mathbf{Q} \right)$ by class field theory.  Thus $\vartheta_1$ and $\vartheta_2$ must be power of mod $I(\rho_{\Delta})$ $p$-adic cyclotomic character $\chi_{cyc}\ \mathrm{mod}\ I\left(\rho_{\Delta} \right)$. For the geometric Frobenius element $\mathrm{Frob}_l$ with prime $l \neq p$, we have $\chi_{cyc}(\mathrm{Frob}_l)=l$ and $\mathrm{det}(\mathrm{Frob}_l)=l^{11}$. Thus the proposition follows by the Chebotarev's density theorem.

\end{proof}

Serre and Swinnerton-Dyer showed that $\bar{\rho}_{\Delta}$ is reducible if and only if $p=2, 3, 5, 7$ and $691$ (see \cite[Corollary to Theorem 4]{SWin1}). \cite{SWin2} also showed the congruence mod $p^n$ for $p=3, 5, 7$ and $691$ (see \cite{SWin2}, page 77 for $p=691$, Theorem 4 for $p=5, 7$ and the table after Theorem 6 for $p=3$). Then combined with our arguments, we have the following table for odd primes. 

\begin{center}
	\begin{tabular}{|l|r|r|r|r|r|} \hline
		$p$ & 3 & 5 & 7 & 691 \\ \hline
		$\sharp \mathcal{L}(\rho_{\Delta})$ & 7 & 4 & 2 & 2 \\ \hline
	\end{tabular}
\end{center}

\subsection{The relation between $I(\rho_{\mathscr{F}})$ and the Iwasawa power series}

Let us take an $\mathbb{I}$-adic normalized Hecke eigen cusp form $\mathscr{F}$. Let the notations and the assumptions be as in Theorem \ref{yandong}. We denote by $\gamma$ a topological generator of $\mathrm{Gal}\left(\mathbf{Q}_{\infty}/\mathbf{Q} \right)$ such that $\kappa_{\mathrm{cyc}}(\gamma)=u$ 
and by $\kappa$ the universal cyclotomic character as follows: $$\kappa : \mathrm{Gal}\left(\overline{\mathbf{Q}}/\mathbf{Q} \right) \twoheadrightarrow \mathrm{Gal}\left(\mathbf{Q}_{\infty}/\mathbf{Q} \right)  \stackrel{\sim}{\rightarrow} 1+p\mathbf{Z}_p \hookrightarrow \Lambda_{\chi}^{\times},$$ where $1+p\mathbf{Z}_p \hookrightarrow \Lambda_{\chi}^{\times}$ is the homomorphism defined by sending $u$ to $1+X$. Write $\eta=\mathrm{det}\rho_{\mathscr{F}}$ for short. By Theorem \ref{w}, we have $$\rho_{\mathscr{F}}\mid_{D_p} \sim \begin{pmatrix} \varepsilon_1 & 0 \\ * & \varepsilon_2 \end{pmatrix},$$with $\varepsilon_1$ unramified. Recall that $\rho_{\mathscr{F}}\left(\mathfrak{m} \right)\cong\overline{\chi}_1\oplus\overline{\chi}_2$. Then for any $g \in D_p$, $\set{\overline{\varepsilon}_1(g), \overline{\varepsilon}_2(g)}$ and $\set{\overline{\chi}_1(g), \overline{\chi}_2(g)}$ are the set of the roots of the $\mathrm{mod}\ \mathfrak{m}$ characteristic polynomial of $\rho_{\mathscr{F}}(g)$: $X^2-\mathrm{tr}\rho_{\mathscr{F}}(g)X+\mathrm{det}\rho_{\mathscr{F}}(g)\ \mathrm{mod}\ \mathfrak{m}$, hence they must be coincide. Thus $\overline{\varepsilon}_1=\overline{\chi}_1 \mid_{D_p}$ and $\overline{\varepsilon}_2=\overline{\chi}_2 \mid_{D_p}$ under the assumption that $\overline{\chi}_1$ (resp. $\overline{\chi}_2$) is unramified (resp. ramified). We denote by $I_p$ the inertia group of $p$ and we choose a $g_0 \in I_p$ such that $\overline{\chi}_1(g_0) \neq \overline{\chi}_2(g_0)$.

Let $\left\{e_1, e_2 \right\}$ be a basis of $\mathrm{Frac}\left(\mathbb{I} \right)^{\oplus 2}$ such that 
\begin{equation}\label{0118}
\rho_{\mathscr{F}}(g_0) = \begin{pmatrix} 1 & 0 \\ 0 & \varepsilon_2(g_0) \end{pmatrix}, \rho_{\mathscr{F}}\mid_{D_p}= \begin{pmatrix} \varepsilon_1 & 0 \\ * & \varepsilon_2 \end{pmatrix}.
\end{equation}
Write $\rho_{\mathscr{F}}(g) = \begin{pmatrix} a(g) & b(g) \\ c(g) & d(g) \end{pmatrix}$ for any $g \in \mathrm{Gal}(\overline{\mathbf{Q}}/\mathbf{Q})$. We have $a(g), d(g)$ and  $b(g)c(g^{\prime}) \in \mathbb{I}$ for any $g, g^{\prime} \in \mathrm{Gal}(\overline{\mathbf{Q}}/\mathbf{Q})$ by Lemma \ref{2.6}. Recall that $I(\rho_{\mathscr{F}})$ is the ideal of $\mathbb{I}$ generated by $b(g)c(g^{\prime})$ for all $g, g^{\prime} \in \mathrm{Gal}(\overline{\mathbf{Q}}/\mathbf{Q})$. Since $\rho_{\mathscr{F}}\left(\mathfrak{m} \right)$ is reducible, we have $I\left(\rho_{\mathscr{F}} \right) \subset \mathfrak{m}$ by Lemma \ref{2.6}.

\begin{lem}\label{1}
Let us take the basis of $\mathrm{Frac}(\mathbb{I})^{\oplus 2}$ to be the same as the beginning of this section. For any $\varphi \in \mathfrak{X}_{\mathbb{I}}$, let $\varpi_{\varphi}$ be a fixed uniformizer of $\varphi(\mathbb{I})$. Then

$$\mathrm{ord}_{\varpi_{\varphi}}(\varphi (I(\rho_{\mathscr{F}})))+1=\sharp \mathcal{L}(\rho_{f_{\varphi}}).$$ 
\end{lem}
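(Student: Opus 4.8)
The plan is to reduce the lemma to Proposition~\ref{2.8} applied to the residual representation $\rho_{f_{\varphi}}$, the real work being to show that the reducibility ideal commutes with the specialization $\varphi$, i.e.\ that $\varphi(I(\rho_{\mathscr{F}})) = I(\rho_{f_{\varphi}})$ as ideals of $\varphi(\mathbb{I})$. Granting this, $\mathrm{ord}_{\varpi_{\varphi}}(\varphi(I(\rho_{\mathscr{F}}))) + 1 = \mathrm{ord}_{\varpi_{\varphi}}(I(\rho_{f_{\varphi}})) + 1 = \sharp\mathcal{L}(\rho_{f_{\varphi}})$.

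First I would verify that Proposition~\ref{2.8} applies to $\rho_{f_{\varphi}}$. The ring $\varphi(\mathbb{I}) \subset \overline{\mathbf{Q}}_p$ is a domain, finite flat over $\mathbf{Z}_p$, hence a discrete valuation ring, i.e.\ the ring of integers of a finite extension of $\mathbf{Q}_p$; in particular $\mathrm{char}\,\varphi(\mathbb{I})/(\varpi_{\varphi}) = p \geq 3$. The representation $\rho_{f_{\varphi}}$ is attached to a classical cuspidal eigenform, hence irreducible; equivalently it is the (semisimple) residual representation $\rho_{\mathscr{F}}(\mathrm{Ker}\varphi) \cong \rho_{f_{\varphi}}$ of the irreducible $\rho_{\mathscr{F}}$. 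Since $\mathrm{tr}\,\rho_{\mathscr{F}}$ takes values in $\mathbb{I}$ and both sides are continuous, Definition~\ref{residual rep} together with the Chebotarev density theorem gives the equality of functions $\mathrm{tr}\,\rho_{f_{\varphi}} = \varphi\circ\mathrm{tr}\,\rho_{\mathscr{F}}$ on $\mathrm{Gal}(\overline{\mathbf{Q}}/\mathbf{Q})$. Reducing modulo $\varpi_{\varphi}$ and using that $\varphi$ induces an embedding $\mathbb{I}/\mathfrak{m} \hookrightarrow \varphi(\mathbb{I})/(\varpi_{\varphi})$ shows $\mathrm{tr}\,\rho_{f_{\varphi}}\ \mathrm{mod}\ \varpi_{\varphi} = \overline{\chi}_1 + \overline{\chi}_2$ with $\overline{\chi}_1 \neq \overline{\chi}_2$ (they already differ on the element $g_0 \in I_p$ fixed at the beginning of this section). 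Hence Proposition~\ref{2.8} yields $\sharp\mathcal{L}(\rho_{f_{\varphi}}) = \mathrm{ord}_{\varpi_{\varphi}} I(\rho_{f_{\varphi}}) + 1$, where $I(\rho_{f_{\varphi}})$ may be formed as in Definition~\ref{3.2.2} using the same $g_0$, which is legitimate since the eigenvalues of $\rho_{f_{\varphi}}(g_0)$ are $1$ and $\varphi(\varepsilon_2(g_0))$ and these reduce to the distinct values $\overline{\chi}_1(g_0), \overline{\chi}_2(g_0)$ modulo $\varpi_{\varphi}$.

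For the identity $\varphi(I(\rho_{\mathscr{F}})) = I(\rho_{f_{\varphi}})$, the key is to express the generators $b(g)c(g')$ of $I(\rho_{\mathscr{F}})$ purely through the traces of $\rho_{\mathscr{F}}$. With the basis of (\ref{0118}), so that $\rho_{\mathscr{F}}(g_0) = \mathrm{diag}(\lambda_1, \lambda_2)$ with $\lambda_1 = 1$ and $\lambda_2 = \varepsilon_2(g_0) = d(g_0) \in \mathbb{I}$, the difference $\lambda_1 - \lambda_2$ reduces to $\overline{\chi}_1(g_0) - \overline{\chi}_2(g_0) \neq 0$ modulo $\mathfrak{m}$ and is therefore a unit of $\mathbb{I}$. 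A direct $2 \times 2$ matrix computation (using $\mathrm{tr}\,\rho_{\mathscr{F}}(h g_0^{n}) = \lambda_1^{n} a(h) + \lambda_2^{n} d(h)$ and the analogous identities for the products $\rho_{\mathscr{F}}(g)\rho_{\mathscr{F}}(g')$ and $\rho_{\mathscr{F}}(g)\rho_{\mathscr{F}}(g')\rho_{\mathscr{F}}(g_0)$) gives, for all $g, g' \in \mathrm{Gal}(\overline{\mathbf{Q}}/\mathbf{Q})$,
\[
a(g) = \frac{\mathrm{tr}\,\rho_{\mathscr{F}}(g g_0) - \lambda_2\,\mathrm{tr}\,\rho_{\mathscr{F}}(g)}{\lambda_1 - \lambda_2},\qquad b(g)c(g') = \frac{\mathrm{tr}\,\rho_{\mathscr{F}}(g g' g_0) - \lambda_2\,\mathrm{tr}\,\rho_{\mathscr{F}}(g g')}{\lambda_1 - \lambda_2} - a(g)a(g').
\]
Applying $\varphi$, using $\mathrm{tr}\,\rho_{f_{\varphi}} = \varphi\circ\mathrm{tr}\,\rho_{\mathscr{F}}$ and that $\varphi(\lambda_1), \varphi(\lambda_2)$ are the eigenvalues of $\rho_{f_{\varphi}}(g_0)$ with $\varphi(\lambda_1) - \varphi(\lambda_2)$ again a unit, the very same formulas compute the corresponding entries $\widetilde a, \widetilde b, \widetilde c$ of $\rho_{f_{\varphi}}$ in a basis diagonalizing $\rho_{f_{\varphi}}(g_0)$. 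Hence $\varphi(b(g)c(g')) = \widetilde b(g)\widetilde c(g')$ for all $g, g'$, and passing to the ideals generated by these elements gives $\varphi(I(\rho_{\mathscr{F}})) = I(\rho_{f_{\varphi}})$, which completes the proof.

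The main obstacle is exactly this last step, and it is also the reason the argument must go through traces rather than matrices: Lemma~\ref{1} does \emph{not} assume that $\rho_{\mathscr{F}}$ admits a stable lattice free over $\mathbb{I}$ (the hypothesis (F) of Corollary~\ref{yandong3} is not imposed here), so in general the individual entries $b(g), c(g)$ lie only in $\mathrm{Frac}(\mathbb{I})$ and ``$\varphi(b(g))$'' is not even defined. Only the products $b(g)c(g') \in \mathbb{I}$ and the traces survive specialization, so one cannot simply reduce the matrix modulo $\mathrm{Ker}\varphi$; the polynomial identity above, together with the fact that $\lambda_1 - \lambda_2$ (and its $\varphi$-image) is a unit so that the denominators are harmless, is what makes the specialization compatibility work.
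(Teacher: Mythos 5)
Your proof is correct, and it reaches the same reduction as the paper (apply Proposition \ref{2.8} to $\rho_{f_{\varphi}}$ after proving $\varphi(I(\rho_{\mathscr{F}}))=I(\rho_{f_{\varphi}})$), but the mechanism you use for the key specialization step is genuinely different. The paper localizes at $\mathcal{P}=\mathrm{Ker}\,\varphi$: since $\mathbb{I}$ is normal and $\mathcal{P}$ has height one, $\mathbb{I}_{\mathcal{P}}$ is a discrete valuation ring, the stable lattice $\mathcal{T}_{\mathcal{P}}$ shows the image is bounded, and after conjugating by $\mathrm{diag}(1,\theta^{n})$ one gets a model with entries in $\mathbb{I}_{\mathcal{P}}$; specializing this matrix representation yields $\rho_{\varphi}$, which is identified with $\rho_{f_{\varphi}}$ by Chebotarev plus irreducibility, and since the conjugation leaves the products $b(g)c(g')$ unchanged one reads off $I(\rho_{f_{\varphi}})=\varphi(I(\rho_{\mathscr{F}}))$. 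You instead stay in the basis of (\ref{0118}) and express $a(g)$ and $b(g)c(g')$ as universal expressions in traces and in the eigenvalues $\lambda_1,\lambda_2\in\mathbb{I}$ of $\rho_{\mathscr{F}}(g_0)$, with the unit $\lambda_1-\lambda_2$ in the denominator; specializing these identities needs only $\mathrm{tr}\,\rho_{f_{\varphi}}=\varphi\circ\mathrm{tr}\,\rho_{\mathscr{F}}$ (Chebotarev and continuity), so you never have to produce an integral model of $\rho_{\mathscr{F}}$ at $\mathcal{P}$ nor invoke an isomorphism $\rho_{\varphi}\cong\rho_{f_{\varphi}}$ of representations. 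This is a pseudo-character style argument: it is arguably more elementary (no discussion of boundedness or of the normality of $\mathbb{I}$ beyond what Lemma \ref{2.6} already gives), while the paper's route has the byproduct of an explicit $\varphi(\mathbb{I})$-valued model of $\rho_{f_{\varphi}}$. Your trace identities check out, and your observation that only $a(g),d(g),b(g)c(g')$ and the traces live in $\mathbb{I}$ — so that one cannot naively reduce the matrix mod $\mathrm{Ker}\,\varphi$ — is exactly the difficulty the paper's localization is designed to handle.

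Two small points. First, your parenthetical justification that $\varphi(\mathbb{I})$ is a discrete valuation ring ``because it is a domain finite flat over $\mathbf{Z}_p$'' is not a valid deduction in general (a finite flat local $\mathbf{Z}_p$-domain need not be integrally closed); this is harmless here because the statement of Lemma \ref{1} already fixes a uniformizer of $\varphi(\mathbb{I})$, so the DVR property is presupposed rather than something you must prove. Second, the assertion that $\varphi(\lambda_1),\varphi(\lambda_2)$ are the eigenvalues of $\rho_{f_{\varphi}}(g_0)$ uses $\det\rho_{f_{\varphi}}=\varphi\circ\det\rho_{\mathscr{F}}$ as well as the trace identity; this follows from the same Chebotarev-plus-continuity argument (cf. Definition \ref{residual rep}), but it deserves an explicit word since your whole computation of $I(\rho_{f_{\varphi}})$ via Definition \ref{3.2.2} hinges on it.
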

\begin{proof}
	
For any $\varphi \in \mathfrak{X}_{\mathbb{I}}$, we denote by $\mathcal{P}=\mathrm{Ker}\ \varphi$ and by $\mathbb{I}_{\mathcal{P}}$ the localization of $\mathbb{I}$ at $\mathcal{P}$. Then $\mathbb{I}_{\mathcal{P}}$ is a discrete valuation ring with $\theta$ a fixed uniformizer of $\mathbb{I}_{\mathcal{P}}$. For the Galois representation 

$$\rho_{\mathcal{P}} : \mathrm{Gal}\left(\overline{\mathbf{Q}}/\mathbf{Q} \right) \stackrel{\rho_{\mathscr{F}}}{\rightarrow} \mathrm{GL}_2(\mathrm{Frac}(\mathbb{I}))=\mathrm{GL}_2(\mathrm{Frac}(\mathbb{I}_{\mathcal{P}}),$$ let $B$ be the $\mathbb{I}_{\mathcal{P}}$-submodule of $\mathrm{Frac}(\mathbb{I}_{\mathcal{P}})$  generated by $b(g)$ for all $g \in \mathrm{Gal}\left(\overline{\mathbf{Q}}/\mathbf{Q} \right)$. Since $\rho_{\mathscr{F}}$ is irreducible, $B \neq (0)$. Since $\rho_{\mathscr{F}}$ is continuous, by Definition \ref{lat} there exists a lattice $\mathcal{T} \subset \mathrm{Frac}(\mathbb{I})^{\oplus 2}$ which is stable under $\mathrm{Gal}(\overline{\mathbf{Q}}/\mathbf{Q})$-action. Then $\mathcal{T}_{\mathcal{P}}=\mathcal{T}\otimes_{\mathbb{I}} \mathbb{I}_{\mathcal{P}}$ is a stable lattice of $\rho_{\mathcal{P}}$ and $\mathrm{Im}\  \rho_{\mathcal{P}}$ is bounded. This implies $B=(\theta)^n$ for an integer $n$.

We replace $\rho_{\mathcal{P}}$ by $\begin{pmatrix} 1 & 0 \\ 0 & \theta^n \end{pmatrix}\rho_{\mathcal{P}}\begin{pmatrix} 1 & 0 \\ 0 & \theta^n \end{pmatrix}^{-1}$ and we denote by the same symbol $\rho_{\mathcal{P}}$ for this new Galois representation. Then $\mathrm{Im}\ \rho_{\mathcal{P}} \subset \mathrm{GL}_2\left(\mathbb{I}_{\mathcal{P}} \right)$ for new $\rho_{\mathcal{P}}$. We also denote by the same symbol $a(g), b(g), c(g), d(g)$ such that $\rho_{\mathcal{P}}(g)=\begin{pmatrix} a(g) & b(g) \\ c(g) & d(g) \end{pmatrix}$ for all $g \in \mathrm{Gal}\left(\overline{\mathbf{Q}}/\mathbf{Q} \right)$.

We denote by $\rho_{\varphi}$ the Galois representation $$\rho_{\varphi} : \mathrm{Gal}(\overline{\mathbf{Q}}/\mathbf{Q}) \stackrel{\rho_{\mathcal{P}}}{\rightarrow} \mathrm{GL}_2(\mathbb{I}_{\mathcal{P}}) \twoheadrightarrow \mathrm{GL}_2(\varphi(\mathbb{I}_{\mathcal{P}})), g \mapsto \begin{pmatrix} a_{\varphi}(g) & b_{\varphi}(g) \\ c_{\varphi}(g) & d_{\varphi}(g) \end{pmatrix}$$ and by $\rho_{f_{\varphi}}$ the Galois representation associated to $f_{\varphi}$. Since $\mathrm{tr}\rho_{\varphi}\left(\mathrm{Frob}_l \right)=\mathrm{tr}\rho_{f_{\varphi}}\left(\mathrm{Frob}_l \right)$ and $\mathrm{det}\rho_{\varphi}\left(\mathrm{Frob}_l \right)=\mathrm{det}\rho_{f_{\varphi}}\left(\mathrm{Frob}_l \right)$ for all primes $l \nmid Np$, we have $\rho_{\varphi}^{\mathrm{ss}} \cong \rho_{f_{\varphi}}^{\mathrm{ss}}$ by the Chebotarev density theorem. Since $\rho_{f_{\varphi}}$ is irreducible, so is $\rho_{\varphi}$. Thus $\rho_{\varphi} \cong \rho_{f_{\varphi}}$. Since $a_{\varphi}(g)=\varphi(a(g)), d_{\varphi}(g)=\varphi(d(g)), b_{\varphi}(g)c_{\varphi}(g)=\varphi(b(g)c(g))$ 
and $a_{\varphi}(g_0) \not\equiv d_{\varphi}(g_0)\ (\mathrm{mod}\ \varpi_{\varphi}$), we have $I\left(\rho_{f_{\varphi}} \right)=\varphi\left(I\left(\rho_{\mathscr{F}} \right) \right)$ by the definition of $I\left(\rho_{f_{\varphi}} \right)$. Thus the statement follows from Proposition \ref{2.8}. 

\end{proof}

\begin{lem}\label{4}
	Let us take the basis of $\mathrm{Frac}(\mathbb{I})^{\oplus 2}$ to be the same as the beginning of this section and let $\eta_1=\chi_1, \eta_2=\chi_2\kappa_{\mathrm{cyc}}\kappa$. Let $J$ be the ideal of $\mathbb{I}$ generated by $\mathrm{tr}\rho_{\mathscr{F}}(g)-\eta_1(g)-\eta_2(g)$ for all $g \in \mathrm{Gal}\left(\overline{\mathbf{Q}}/\mathbf{Q} \right)$ and $J^{\prime}$ the ideal generated by $a(g)-\eta_1(g)$ for all $g \in \mathrm{Gal}\left(\overline{\mathbf{Q}}/\mathbf{Q} \right)$. Then we have the following statements:
\begin{list}{}{}
\item[(1)] $I(\rho_{\mathscr{F}})=J=J^{\prime}$.
\item[(2)] Suppose $N=1$. Then $\rho_{\mathscr{F}}\left(\mathfrak{m} \right) \cong \overline{\mathbf{1}} \oplus \overline{\chi}$, where $\mathbf{1}$ is the trivial character. 
\end{list}
	 
\end{lem}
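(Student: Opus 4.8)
The plan is to prove the two assertions separately, using the description of $I(\rho_{\mathscr F})$ as a reducibility ideal together with Lemma~\ref{2.6}.

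For statement~(1), I would first fix the characters $\eta_1=\chi_1$ and $\eta_2=\chi_2\kappa_{\mathrm{cyc}}\kappa$ and observe that, by Theorem~\ref{3.3}, for a geometric Frobenius $\mathrm{Frob}_l$ with $l\nmid Np$ we have $\mathrm{tr}\,\rho_{\mathscr F}(\mathrm{Frob}_l)=c(l,\mathscr F)$ and $\det\rho_{\mathscr F}(\mathrm{Frob}_l)=\chi(l)(u(1+X))^{s_l}=\eta_1(\mathrm{Frob}_l)\eta_2(\mathrm{Frob}_l)$, while modulo $\mathfrak m$ the trace reduces to $\overline\chi_1+\overline\chi_2=\psi_1+\psi_2$. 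Hence $\eta_1+\eta_2$ reduces modulo $\mathfrak m$ to $\psi_1+\psi_2$, so the hypotheses of Lemma~\ref{2.6} apply with the ideal $J$: indeed $J$ is by definition the smallest ideal $I$ of $\mathbb I$ for which $\mathrm{tr}\,\rho_{\mathscr F}\bmod I$ splits as $\vartheta_1+\vartheta_2$ with $\vartheta_i\equiv\psi_i$, because any such splitting forces (again by Chebotarev and continuity) $\vartheta_i\equiv\eta_i\bmod I$. Now Lemma~\ref{2.6} applied to the ideal $I(\rho_{\mathscr F})$ (which is $\subset\mathfrak m$, with the trace reducing to $\vartheta_1+\vartheta_2$, $\vartheta_i\equiv\eta_i$) gives $a(g)\bmod I(\rho_{\mathscr F})=\eta_1(g)$ and $d(g)\bmod I(\rho_{\mathscr F})=\eta_2(g)$; thus $\mathrm{tr}\,\rho_{\mathscr F}(g)-\eta_1(g)-\eta_2(g)\in I(\rho_{\mathscr F})$ and $a(g)-\eta_1(g)\in I(\rho_{\mathscr F})$ for all $g$, giving $J\subset I(\rho_{\mathscr F})$ and $J^{\prime}\subset I(\rho_{\mathscr F})$. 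For the reverse inclusions: $I(\rho_{\mathscr F})=BC$ where $B,C$ are the ideals generated by the $b(g)$'s and $c(g)$'s; since there is $h$ with $b(h)$ a unit (the lattice was chosen so that $B=\mathbb I$, as in the proof of Proposition~\ref{2.8}), one has $I(\rho_{\mathscr F})=C$, and then using the cocycle relation $c(gg^{\prime})=c(g)a(g^{\prime})+d(g)c(g^{\prime})$ together with $a\equiv\eta_1$, $d\equiv\eta_2$ modulo $J^{\prime}$ one shows $c(g)\bmod J^{\prime}$ defines a crossed homomorphism that must vanish (because $\rho_{\mathscr F}$ is irreducible and Ribet-type rigidity forces the off-diagonal entries to land in $J^{\prime}$); hence $C\subset J^{\prime}$, i.e. $I(\rho_{\mathscr F})\subset J^{\prime}$. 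A symmetric argument with the trace gives $I(\rho_{\mathscr F})\subset J$, completing the chain $I(\rho_{\mathscr F})=J=J^{\prime}$. The cleanest route is probably to compare all three with the Bella\"iche--Chenevier reducibility ideal directly and invoke its universal property, and I expect the verification that $J^{\prime}\subset I(\rho_{\mathscr F})$ (i.e. controlling $c$, not just $b c$) to be the main technical point.

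For statement~(2), suppose $N=1$, so $\chi$ is a primitive Dirichlet character modulo $p$, i.e. a power of the Teichm\"uller character $\omega$. By hypothesis $\rho_{\mathscr F}(\mathfrak m)\cong\psi_1\oplus\psi_2$ with $\psi_1$ unramified at $p$ and $\psi_2$ ramified at $p$, and by condition~(D) we have $\overline\chi_i=\psi_i$ with $\chi_1\chi_2=\chi$ and $\chi_1,\chi_2$ of relatively prime conductor. Since $\chi$ has conductor a power of $p$ and the conductors of $\chi_1,\chi_2$ are coprime, exactly one of $\chi_1,\chi_2$ has conductor prime to $p$; being a factor of a character of $p$-power conductor, that factor must be trivial. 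The ramification condition at $p$ forces $\psi_1=\overline\chi_1$ to be the unramified one, hence $\chi_1$ is trivial modulo $p$, i.e. $\psi_1=\overline{\mathbf 1}$; consequently $\psi_2=\overline\chi=\overline\chi_2$. Therefore $\rho_{\mathscr F}(\mathfrak m)\cong\psi_1\oplus\psi_2\cong\overline{\mathbf 1}\oplus\overline\chi$, which is the claim. I would present this as a short paragraph after the proof of~(1); the only thing to be careful about is that ``$\chi_1$ a Dirichlet character of conductor prime to $p$ dividing a character of $p$-power conductor'' genuinely forces triviality, which is immediate from unique factorization of Dirichlet characters by conductor.

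Overall, statement~(1) is where the real work lies — it is essentially the identification of three incarnations of the reducibility ideal — and the anticipated obstacle is obtaining $I(\rho_{\mathscr F})\subset J^{\prime}$, for which one must extract information about the individual off-diagonal entry $c$ rather than only the products $b(g)c(g^{\prime})$ that Lemma~\ref{2.6} directly controls; this is handled by exploiting the existence of $h$ with $b(h)$ a unit. Statement~(2) is then a formal consequence of conditions~(D) and the ramification hypothesis.
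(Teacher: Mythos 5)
Your statement (2) is fine: the conductor argument (under condition (D), $\psi_2$ ramified at $p$ forces $p\mid\mathrm{cond}(\chi_2)$, coprimality and $\mathrm{cond}(\chi)=p$ force $\chi_1=\mathbf{1}$, $\chi_2=\chi$) is a valid and even more elementary route than the paper, which instead deduces (2) from the triviality of the unramified character $a\bmod I(\rho_{\mathscr F})$ via class field theory. Part (1), however, has a genuine gap exactly at its central point. You apply Lemma \ref{2.6} with $I=I(\rho_{\mathscr F})$ and assert that the resulting characters satisfy $\vartheta_i\equiv\eta_i\bmod I(\rho_{\mathscr F})$ ``by Chebotarev and continuity''. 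Chebotarev gives you nothing here: what you know mod $I(\rho_{\mathscr F})$ is only that $a$ and $d$ are characters lifting $\psi_1,\psi_2$ with product $\det\rho_{\mathscr F}$; a priori they could be $\eta_1\xi$ and $\eta_2\xi^{-1}$ for a character $\xi\equiv 1\bmod\mathfrak m$ ramified at primes dividing $Np$, and there is no identity of Frobenius traces mod $I(\rho_{\mathscr F})$ available to you — proving $\mathrm{tr}\rho_{\mathscr F}\equiv\eta_1+\eta_2\bmod I(\rho_{\mathscr F})$ (i.e.\ $J\subset I(\rho_{\mathscr F})$) is precisely what is at stake. The paper's proof of the inclusion $J'\subset I(\rho_{\mathscr F})$ is the real work: one uses the basis of equation (\ref{0118}) to see that $a\bmod I(\rho_{\mathscr F})$ is unramified at $p$ (hence outside $N$), writes $a=\eta_1(1+m)$ with $m(g)\in\mathfrak m$, notes that $(1+m)\bmod I(\rho_{\mathscr F})$ is a character with values in the pro-$p$ kernel of $(\mathbb I/I(\rho_{\mathscr F}))^\times\to(\mathbb I/\mathfrak m)^\times$ cutting out an abelian extension inside $\mathbf Q(\mu_N)$ by class field theory, and kills it using the standing hypothesis $p\nmid\phi(N)$ — a hypothesis your argument never invokes, which is a telltale sign of the gap.

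Your treatment of the opposite inclusion is also off. The inclusion $I(\rho_{\mathscr F})\subset J$ is the easy one: apply Lemma \ref{2.6} to the ideal $J$ itself (mod $J$ the trace is $\eta_1+\eta_2$, two characters lifting $\psi_1\neq\psi_2$), which gives $b(g)c(g')\in J$ and also $a\equiv\eta_1\bmod J$ (the labels being fixed by unramifiedness mod $\mathfrak m$), whence $J'\subset J$; the identity $\mathrm{tr}\rho_{\mathscr F}(g)-\eta_1(g)-\eta_2(g)=(a(g)-\eta_1(g))+(a(g^{-1})-\eta_1(g^{-1}))\eta_1\eta_2(g)$ gives $J\subset J'$. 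By contrast, your route normalizes the basis so that $B=\mathbb I$ (``some $b(h)$ a unit''): that rescaling is available over a discrete valuation ring as in Proposition \ref{2.8} or after localizing at $\ker\varphi$ as in Lemma \ref{1}, but not over the two-dimensional ring $\mathbb I$ with the basis fixed at the start of Section 3.2, where $B$ need not even be principal; and the claim that irreducibility together with ``Ribet-type rigidity'' forces the cocycle $c\bmod J'$ to vanish is backwards — irreducibility guarantees the off-diagonal entries are nonzero, and Ribet-type arguments use them to produce nontrivial extensions rather than to confine them to an ideal. So the correct easy direction should be rerouted through Lemma \ref{2.6} applied to $J$, and the hard direction $J'\subset I(\rho_{\mathscr F})$ needs the class-field-theoretic argument with $p\nmid\phi(N)$ that is missing from your proposal.
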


\begin{proof}
We first show $J=J^{\prime}$. Since $\mathrm{tr}\rho_{\mathscr{F}} \equiv \eta_1+\eta_2\ \mathrm{mod}\ J$, $a(g) \equiv \eta_1(g)\ \mathrm{mod}\ J$ or $a(g) \equiv \eta_2(g)\ \mathrm{mod}\ J$ for all $g \in \mathrm{Gal}\left(\overline{\mathbf{Q}}/\mathbf{Q} \right)$ by Lemma \ref{2.6}. Since the character $a\ \mathrm{mod}\ \mathfrak{m}=\overline{\chi}_1$ is unramified at $p$, we have $a(g) \equiv \eta_1(g)\ \mathrm{mod}\ J$ for all $g \in \mathrm{Gal}\left(\overline{\mathbf{Q}}/\mathbf{Q} \right)$. This implies $J^{\prime} \subset J$. We also have $J \subset J^{\prime}$ since 
$$\mathrm{tr}\rho_{\mathscr{F}}(g)-\eta_1(g)-\eta_2(g)=(a(g)-\eta_1(g))+(a(g^{-1})-\eta_1(g^{-1}))\eta_1\eta_2(g) \in J^{\prime}.$$

Now we prove $I(\rho_{\mathscr{F}})=J^{\prime}$. We have $b(g)c(g^{\prime}) \in J$ for any $g, g^{\prime} \in \mathrm{Gal}(\overline{\mathbf{Q}}/\mathbf{Q})$ by Lemma\ \ref{2.6} hence $I(\rho_{\mathscr{F}}) \subset J=J^{\prime}$. Let $K$ be the abelian extension of $\mathbf{Q}$ corresponding to $$\mathrm{Ker}\left[a\ \mathrm{mod}\ I\left(\rho_{\mathscr{F}} \right) : \mathrm{Gal}(\overline{\mathbf{Q}}/\mathbf{Q}) \longrightarrow \left(\mathbb{I}/I(\rho_{\mathscr{F}}) \right)^{\times},\ g \mapsto a(g)\ \mathrm{mod}\ I(\rho_{\mathscr{F}}) \right].$$ We denote by $\tilde{a} : \mathrm{Gal}\left(K/\mathbf{Q} \right) \hookrightarrow \left(\mathbb{I}/I(\rho_{\mathscr{F}}) \right)^{\times}$ the induced homomorphism. For all $g \in \mathrm{Gal}\left(\overline{\mathbf{Q}}/\mathbf{Q} \right)$, we denote by $\overline{g}$ the image of $g$ under $\mathrm{Gal}\left(\overline{\mathbf{Q}}/\mathbf{Q} \right) \twoheadrightarrow  \mathrm{Gal}\left(K/\mathbf{Q} \right)$. Write $a(g)=\eta_1(g)\left(1+m(g) \right)$ where $m(g) \in \mathfrak{m}$. Then $\tilde{a}(\overline{g})=\eta_1(g)\ \mathrm{mod}\ I(\rho_{\mathscr{F}})\cdot \left(1+m(g) \right)\ \mathrm{mod}\ I(\rho_{\mathscr{F}}).$ Note that $a\ \mathrm{mod}\ I\left(\rho_{\mathscr{F}} \right)$ is unramified outside $N$ by the equation (\ref{0118}), hence $K$ is a subfield of $\mathbf{Q}(\mu_N)$ by class field theory. On the other hand, the kernel of the map $\left(\mathbb{I}/I(\rho_{\mathscr{F}}) \right)^{\times} \rightarrow \left(\mathbb{I}/\mathfrak{m} \right)^{\times}$ is a pro-$p$ group, thus $\left(1+m(g) \right)\ \mathrm{mod}\ I(\rho_{\mathscr{F}})$ must be trivial under the assumption $p \nmid \phi(N)$. This implies $\eta_1(g) \equiv a(g)\ \mathrm{mod}\ I(\rho_{\mathscr{F}})$, hence $J^{\prime} \subset I(\rho_{\mathscr{F}})$. Specially when $N=1$, we have that $a\ \mathrm{mod}\ I\left(\rho_{\mathscr{F}} \right)$ is an unramified character. Thus $a\ \mathrm{mod}\ I\left(\rho_{\mathscr{F}} \right)$ is trivial by class field theory.

\end{proof}

Lemma \ref{4} tells us that $I(\rho_{\mathscr{F}})$ is a closed ideal in $\mathbb{I}$ under the $\mathfrak{m}$-adic topology.

\begin{prp}\label{5}
Let us take the basis of $\mathrm{Frac}(\mathbb{I})^{\oplus 2}$ to be the same as at the beginning of this section.  Let $L_{\infty}, L_{\infty}\left(Np \right)$ be the maximal unramified abelian $p$-extension of $\mathbf{Q}\left(\mu_{Np^{\infty}} \right)$ and the maximal abelian $p$-extension unramified outside $Np$ of $\mathbf{Q}\left(\mu_{Np^{\infty}} \right)$. We denote by $X_{\infty}=\mathrm{Gal}\left(L_{\infty}/\mathbf{Q}\left(\mu_{Np^{\infty}} \right) \right)$ and by $Y_{\infty}=\mathrm{Gal}\left(L_{\infty}\left(Np \right)/\mathbf{Q}\left(\mu_{Np^{\infty}} \right) \right)$ on which $\Delta_{Np}=\mathrm{Gal}\left(\mathbf{Q}\left(\mu_{Np^{\infty}} \right)/\mathbf{Q}_{\infty} \right)$ acts by conjugation. Then we have the following statements:

\begin{list}{}{}
\item[(1)]$\hat{G}_{\chi_1^{-1}\chi_2}(X)\mathbb{I} \subset I(\rho_{\mathscr{F}}).$
\item[(2)]Suppose the $\Lambda_{\chi_1\chi_2^{-1}}$-modules $X_{\infty}^{\chi_1\chi_2^{-1}}$ and $Y_{\infty}^{\chi_1^{-1}\chi_2}$ are cyclic. Then $I\left(\rho_{\mathscr{F}} \right)$ is principal. 
\end{list}

\end{prp}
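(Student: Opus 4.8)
The plan is to relate the ideal $I(\rho_{\mathscr{F}})$ — which by Lemma \ref{4} equals the ideal $J'$ generated by $a(g)-\eta_1(g)$, or equivalently the reducibility ideal $J$ — to Iwasawa-theoretic data. The classical mechanism (going back to Ribet, Mazur--Wiles, Wiles) is that the residual reducibility of $\rho_{\mathscr{F}}$ modulo $I(\rho_{\mathscr{F}})$, together with the nonsplit local shape at $p$ coming from Theorem \ref{w}, produces a nontrivial extension of $\eta_2$ by $\eta_1$ over $\mathbb{I}/I(\rho_{\mathscr{F}})$, which translates via class field theory into a surjection from an Iwasawa module onto $\mathbb{I}/I(\rho_{\mathscr{F}})$. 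The characteristic ideal of that Iwasawa module is, by the Iwasawa Main Conjecture (Mazur--Wiles), the $p$-adic $L$-function, which after the twist and change of variables encoded in Section 2.3 is exactly $\hat{G}_{\chi_1^{-1}\chi_2}(X)$. This should give part (1); part (2) will come from refining "surjection onto $\mathbb{I}/I(\rho_{\mathscr{F}})$" to an isomorphism of cyclic modules under the cyclicity hypothesis, which forces $I(\rho_{\mathscr{F}})$ to be generated by a single element (the image of a generator of the characteristic ideal), hence principal.

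For part (1), first I would form $\bar{\rho} := \rho_{\mathscr{F}} \bmod I(\rho_{\mathscr{F}})$ on the chosen basis; by Lemma \ref{2.6} and Lemma \ref{4} it has the shape $\begin{pmatrix} \eta_1 & b \\ c & \eta_2 \end{pmatrix}$ with $bc \equiv 0$, $a \equiv \eta_1$, $d \equiv \eta_2$ modulo $I(\rho_{\mathscr{F}})$, where $\eta_1=\chi_1$, $\eta_2=\chi_2\kappa_{\mathrm{cyc}}\kappa$. Since not all $b(g)$ vanish (irreducibility of $\rho_{\mathscr{F}}$, and $B=\mathbb{I}$ after the normalization in Lemma \ref{1}'s style argument), the upper-right entry $g \mapsto b(g)$, suitably interpreted, defines a cocycle in $H^1\!\left(\mathrm{Gal}(\overline{\mathbf{Q}}/\mathbf{Q}),\ (\mathbb{I}/I(\rho_{\mathscr{F}}))(\eta_1\eta_2^{-1})\right)$ that is ramified only where $\rho_{\mathscr{F}}$ is (i.e. at primes dividing $Np$), and — crucially — unramified at $p$ itself because the nontrivial local extension at $p$ sits in the lower-left corner (Theorem \ref{w} gives $* $ below the diagonal, so after reduction the upper-right extension class is locally trivial at $p$). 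The cocycle is nonzero since a vanishing class would split $\bar\rho$, contradicting $c \not\equiv 0$ paired against some $b(h) \equiv $ unit via $BC = I(\rho_{\mathscr{F}})$. Translating $\eta_1\eta_2^{-1}=\chi_1\chi_2^{-1}\kappa_{\mathrm{cyc}}^{-1}\kappa^{-1}$ and restricting to $\mathbf{Q}(\mu_{Np^\infty})$, such a class corresponds to an element of $\mathrm{Hom}(X_\infty, \mathbb{I}/I(\rho_{\mathscr{F}}))$ transforming by the component $\chi_1\chi_2^{-1}$ under $\Delta_{Np}$; one then reads off that $X_\infty^{\chi_1\chi_2^{-1}}$ surjects onto $\mathbb{I}/I(\rho_{\mathscr{F}})$ as a $\Lambda$-module (via the cyclotomic/$\mathbb{I}$-structure determined by $\kappa$). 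Hence $\mathrm{char}_{\Lambda}\!\left(X_\infty^{\chi_1\chi_2^{-1}}\right)\cdot\mathbb{I} \subset I(\rho_{\mathscr{F}})$, and by the Main Conjecture this characteristic ideal is generated by the Iwasawa power series attached to $\chi_1^{-1}\chi_2$ (up to the reflection $\chi \leftrightarrow \omega\chi^{-1}$ and the shift $u^2(1+X)-1$ built into $\hat G$), i.e. by $\hat{G}_{\chi_1^{-1}\chi_2}(X)$. This yields $\hat{G}_{\chi_1^{-1}\chi_2}(X)\mathbb{I}\subset I(\rho_{\mathscr{F}})$.

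For part (2), under the hypothesis that $X_\infty^{\chi_1\chi_2^{-1}}$ and $Y_\infty^{\chi_1^{-1}\chi_2}$ are cyclic, I would upgrade the surjection above. Using $Y_\infty$ (the "unramified outside $Np$" variant) instead of $X_\infty$ controls the local behavior at $Np$ and, combined with the cyclicity of $Y_\infty^{\chi_1^{-1}\chi_2}$, shows the extension class is essentially the universal one: $I(\rho_{\mathscr{F}})$ is generated by the value of (a generator of) the relevant Fitting/characteristic ideal, which is one element. More precisely, a cyclic module over the local ring $\mathbb{I}$ has principal annihilator-type invariants, so the image of the Iwasawa power series generates $I(\rho_{\mathscr{F}})$ up to a unit, giving principality; the two cyclicity conditions (one for $X_\infty$, one for $Y_\infty$) are exactly what is needed to pin both the upper bound (from the reducibility/$Y_\infty$ side) and the lower bound (from the Main Conjecture/$X_\infty$ side) onto the same principal ideal. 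I expect the main obstacle to be the bookkeeping of the twists and the reflection: matching the $\Delta_{Np}$-isotypic component $\chi_1\chi_2^{-1}$ of the class group side with the character $\chi_1^{-1}\chi_2\omega$ appearing in $L_p$ and with $\hat G_{\chi_1^{-1}\chi_2}$ as normalized in Section 2.3, and verifying carefully that the $p$-adic-$L$ variable substitution $X \mapsto u^2(1+X)-1$ together with $\varphi(1+X)=\zeta_\varphi u^{k_\varphi-2}$ is consistent with $\kappa$ sending $u \mapsto 1+X$. A secondary subtlety is justifying that the cocycle is genuinely unramified at $p$ (rather than merely at primes away from $p$), which is where Theorem \ref{w} and the hypothesis that $\psi_1$ is unramified, $\psi_2$ ramified at $p$ do the essential work.
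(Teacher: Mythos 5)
Your strategy for part (1) is the paper's own (a Ribet--Wiles construction: reduce the upper-right entries modulo $I(\rho_{\mathscr{F}})$, cut out an abelian pro-$p$ extension of $\mathbf{Q}(\mu_{Np^{\infty}})$ on which $\Delta_{Np}$ acts by $\chi_1\chi_2^{-1}$, and invoke Mazur--Wiles), but the two steps you gloss are exactly where the work lies. First, to get $\hat{G}_{\chi_1^{-1}\chi_2}(X)\mathbb{I}\subset I(\rho_{\mathscr{F}})$ you need far more than a nonzero class: you need the values of $\overline{b}$ on $\mathrm{Gal}\bigl(\overline{\mathbf{Q}}/\mathbf{Q}(\mu_{Np^{\infty}})\bigr)$ to generate all of $B/I(\rho_{\mathscr{F}})B$ over $\mathbb{I}$, so that the Iwasawa module surjects onto it and Fitting ideals can be compared; in the paper this is Claim \ref{d1}, proved by the commutator identity $\overline{b}([g,g_0])=\frac{\lambda-1}{\lambda}\eta_2(g)^{-1}\overline{b}(g)$ with $\lambda=\varepsilon_2(g_0)\not\equiv 1 \bmod \mathfrak{m}$, whereas you simply assert the surjection (``one reads off''). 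Relatedly, taking the target to be $\mathbb{I}/I(\rho_{\mathscr{F}})$ presumes $B$ can be normalized to $\mathbb{I}$; over the two-dimensional ring $\mathbb{I}$ (unlike the DVR localization used in Lemma \ref{1}) $B$ is only a finitely generated fractional ideal, not known to be principal at this stage, which is why the paper works with $B/I(\rho_{\mathscr{F}})B$ and Fitting ideals. Second, to factor through $X_{\infty}$ you must show the extension is unramified at the primes dividing $N$, not only at $p$; your sketch treats only $p$. This is precisely where hypothesis (D) enters: the conductor of $\chi_1\chi_2^{-1}$ is $Np$, and one argues by class field theory as in Wiles' Lemma 6.1. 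Without it you only obtain a map from a larger Iwasawa module (allowing ramification at $N$), and the comparison with the Main Conjecture for $X_{\infty}^{\chi_1\chi_2^{-1}}$ breaks down.

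Part (2) is where your proposal genuinely fails as stated. The assertion to prove is only that $I(\rho_{\mathscr{F}})$ is principal; you instead argue that it is generated by the image of the Iwasawa power series ``up to a unit,'' which is the content of Corollary \ref{r} and requires (P) in addition --- cyclicity alone does not ``pin upper and lower bounds onto the same principal ideal,'' and there is no upper bound on $I(\rho_{\mathscr{F}})$ extracted from the $Y_{\infty}$ side. The actual mechanism is multiplicative: build a second extension from the lower-left entries $c(g)$ --- this one may be ramified at $p$, since the $*$ of Theorem \ref{w} sits in that corner, which is exactly why $Y_{\infty}$ (unramified outside $Np$) is the correct receptacle --- yielding surjections $X_{\infty}^{\chi_1\chi_2^{-1}}\otimes\mathbb{I}\twoheadrightarrow B/I(\rho_{\mathscr{F}})B$ and $Y_{\infty}^{\chi_1^{-1}\chi_2}\otimes\mathbb{I}\twoheadrightarrow C/I(\rho_{\mathscr{F}})C$. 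Cyclicity of the two Iwasawa modules makes $B/I(\rho_{\mathscr{F}})B$ and $C/I(\rho_{\mathscr{F}})C$ cyclic over $\mathbb{I}$, Nakayama's lemma then makes $B$ and $C$ themselves principal, and since $I(\rho_{\mathscr{F}})=BC$ (Lemma \ref{2.6} and Definition \ref{3.2.2}) the product of the two generators generates $I(\rho_{\mathscr{F}})$. This factorization of $I(\rho_{\mathscr{F}})$ as $BC$, with each factor made principal separately, is the missing idea in your part (2).
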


\begin{proof}
Recall that $\kappa$ is the character $\mathrm{Gal}\left(\overline{\mathbf{Q}}/\mathbf{Q} \right) \twoheadrightarrow \mathrm{Gal}\left(\mathbf{Q}_{\infty}/\mathbf{Q} \right)  \stackrel{\sim}{\rightarrow} 1+p\mathbf{Z}_p \hookrightarrow \Lambda_{\chi}^{\times}$. Let $\eta_1=\chi_1$ and $\eta_2=\chi_2\kappa_{\mathrm{cyc}}\kappa$.  We have that $\eta_1(g) \equiv a(g)\ \mathrm{mod}\ I(\rho_{\mathscr{F}})$ and $\eta_2(g) \equiv d(g)\ \mathrm{mod}\ I(\rho_{\mathscr{F}})$ for all $g \in \mathrm{Gal}\left(\overline{\mathbf{Q}}/\mathbf{Q} \right)$ by Lemma \ref{2.6} and Lemma \ref{4}. We prove the proposition by using Wiles' construction (cf. \cite[Section 6]{Wi90}) of an uramified extension $N_{\infty}$ of $\mathbf{Q}(\mu_{Np^{\infty}})$. 

Let $B$ (resp. $C$) be an $\mathbb{I}$-submodule of $\mathrm{Frac}(\mathbb{I})$ generated by $b(g)$ (resp. $c(g)$) for all $g \in \mathrm{Gal}\left(\overline{\mathbf{Q}}/\mathbf{Q} \right)$. Since $c(g)B$ and $b(g)C$ are ideals of $\mathbb{I}$ for all $g \in \mathrm{Gal}\left(\overline{\mathbf{Q}}/\mathbf{Q} \right)$ by Lemma \ref{2.6}, $B$ and $C$ are finitely generated. We denote by $b$ the function $$b : \mathrm{Gal}\left(\overline{\mathbf{Q}}/\mathbf{Q} \right) \rightarrow B,\ g \mapsto b(g)$$ 
and we endow $B$ with the $\mathfrak{m}$-adic topology. 

\begin{cla}\label{0}
$b$ is continuous. 
	
\end{cla}

\begin{proof}
Since $\rho_{\mathscr{F}}$ is continuous, by Definition \ref{lat} there exists a lattice $\mathcal{T} \subset \mathrm{Frac}(\mathbb{I})^{\oplus 2}$ which is stable under $\mathrm{Gal}(\overline{\mathbf{Q}}/\mathbf{Q})$-action such that $\rho_{\mathscr{F}} : \mathrm{Gal}(\overline{\mathbf{Q}}/\mathbf{Q}) \longrightarrow \mathrm{Aut}_{\mathbb{I}}(\mathcal{T})$ is continuous with respect to the $\mathfrak{m}$-adic topology of $\mathrm{Aut}_{\mathbb{I}}(\mathcal{T})$. We denote by $V_i=\mathrm{Frac}(\mathbb{I})e_i$ and by $\mathcal{T}_i=\mathcal{T} \cap V_i\ \left(i=1, 2 \right).$ Then $\rho_{\mathscr{F}}\left(\mathcal{T}_i \right) \subset \mathcal{T}$. For any $xe_1 \in \mathcal{T}_1$ and $ye_2 \in \mathcal{T}_2$, we have $$\rho_{\mathscr{F}}\left(g \right)\left(xe_1 \right)=a\left(g \right)xe_1+c\left(g \right)xe_2,$$ $$\rho_{\mathscr{F}}\left(g \right)\left(ye_2 \right)=b\left(g \right)ye_1+d\left(g \right)ye_2.$$Since $a\left(g \right) \in \mathbb{I}$ by Lemma \ref{2.6}, $a\left(g \right)xe_1 \in \mathcal{T} \cap V_1=\mathcal{T}_1$ and $c\left(g \right)xe_2=\rho_{\mathscr{F}}\left(g \right)\left(xe_1 \right)-a\left(g \right)xe_1 \in \mathcal{T} \cap V_2=\mathcal{T}_2$. We also have $b\left(g \right)ye_1 \in \mathcal{T}_1$ by the same argument. This implies that $\mathcal{T}_1 \oplus \mathcal{T}_2$ is also a stable lattice of $\mathrm{Frac}\left(\mathbb{I} \right)^{\oplus 2}$. We replace $\mathcal{T}$ with $\mathcal{T}_1 \oplus \mathcal{T}_2$. The representation $\rho_{\mathscr{F}} : \mathrm{Gal}(\overline{\mathbf{Q}}/\mathbf{Q}) \longrightarrow \mathrm{Aut}_{\mathbb{I}}(\mathcal{T})$ is also continuous by the Artin-Rees lemma. We may regard $B$ as an $\mathbb{I}$-submodule of $\mathrm{Hom}_{\mathbb{I}}(\mathcal{T}_2, \mathcal{T}_1)$ via the injective homomorphism as follows: $$B \hookrightarrow \mathrm{Hom}_{\mathbb{I}}\left(\mathcal{T}_2, \mathcal{T}_1 \right),\ b(g) \mapsto b(g)(ye_2)=b(g)\cdot ye_1$$ for all $ye_2 \in \mathcal{T}_2$. Then $b$ is the following map: $$\mathrm{Gal}\left(\overline{\mathbf{Q}}/\mathbf{Q} \right) \stackrel{\rho_{\mathscr{F}}}{\rightarrow} \mathrm{Aut}_{\mathbb{I}}\left(\mathcal{T} \right) \rightarrow \mathrm{Hom}_{\mathbb{I}}(\mathcal{T}_2, \mathcal{T}_1).$$The homomorphism $\mathrm{Aut}_{\mathbb{I}}(\mathcal{T}) \rightarrow \mathrm{Hom}_{\mathbb{I}}(\mathcal{T}_2, \mathcal{T}_1)$ is continuous under the $\mathfrak{m}$-adic topology, hence $b$ is continuous. 
\end{proof}

Define $\overline{b}$ the following homomorphism:
$$\overline{b} : \mathrm{Gal}\left(\overline{\mathbf{Q}}/\mathbf{Q}\left(\mu_{Np^{\infty}} \right) \right) \stackrel{b}{\rightarrow} B \twoheadrightarrow B/I(\rho_{\mathscr{F}})B.$$Let $N_{\infty}$ be the abelian extension of $\mathbf{Q}\left(\mu_{Np^{\infty}} \right)$ corresponding to $\mathrm{Ker}\ \overline{b}$ and we denote by the same symbol $\overline{b}$ 

\begin{equation}\label{three}
\overline{b} : G=\mathrm{Gal}\left(N_{\infty}/\mathbf{Q}\left(\mu_{Np^{\infty}} \right) \right) \hookrightarrow B/I(\rho_{\mathscr{F}})B.
\end{equation}
For any $h \in \mathrm{Gal}\left(\overline{\mathbf{Q}}/\mathbf{Q} \right)$ and $g \in \mathrm{Gal}\left(\overline{\mathbf{Q}}/\mathbf{Q}\left(\mu_{Np^{\infty}} \right) \right)$, a matrix calculation shows that 
$$\overline{b}(hgh^{-1})=\eta_1\eta_2^{-1}(h)\overline{b}(g).$$
Let $\tilde{\gamma}$ be a topological generator of $\mathrm{Gal}\left(\mathbf{Q}\left(\mu_{Np^{\infty}} \right)/\mathbf{Q}\left(\mu_{Np} \right) \right)$ which is sent to $\gamma$ under the canonical isomorphism $\mathrm{Gal}\left(\mathbf{Q}\left(\mu_{Np^{\infty}} \right)/\mathbf{Q}\left(\mu_{Np} \right) \right) \rightarrow \mathrm{Gal}\left(\mathbf{Q}_{\infty}/\mathbf{Q} \right)$. The above arguments tell us that $\overline{b}\left(G \right)$ is a $\Lambda_{\chi_1\chi_2^{-1}}=\mathbf{Z}_p[\chi_1\chi_2^{-1}][[X]]$-module under the surjection
\begin{equation*}
\mathbf{Z}_p[[\mathrm{Gal}\left(\mathbf{Q}\left(\mu_{Np^{\infty}} \right)/\mathbf{Q} \right)]] \stackrel{\eta_1\eta_2^{-1}}{\longrightarrow} \Lambda_{\chi_1\chi_2^{-1}},\ u^{-1}\tilde{\gamma}^{-1} \mapsto 1+X
\end{equation*}
and $\mathrm{Gal}\left(\mathbf{Q}(\mu_{Np^{\infty}})/\mathbf{Q}_{\infty} \right)$ acts on $\overline{b}\left(G \right)$ via $\chi_1\chi_2^{-1}$. 
\begin{cla}\label{d1}
The canonical homomorphism $\overline{b}\left(G \right) \displaystyle\otimes_{\Lambda_{\chi_1\chi_2^{-1}}} \mathbb{I} \rightarrow B/I(\rho_{\mathscr{F}})B$ is an isomorphism. 
\end{cla}
\begin{proof}
The injectivity follows from the assumption that $\mathbb{I}$ is flat over $\Lambda_{\chi_1\chi_2^{-1}}$ by applying the base extension $\displaystyle\otimes_{\Lambda_{\chi_1\chi_2^{-1}}} \mathbb{I}$ to the equation (\ref{three}). For any $g \in \mathrm{Gal}\left(\overline{\mathbf{Q}}/\mathbf{Q} \right)$, consider the commutator $[g, g_0] \in \mathrm{Gal}\left(\overline{\mathbf{Q}}/\mathbf{Q}\left(\mu_{N^{\infty}p^{\infty}} \right) \right)$ we have $$\overline{b}([g, g_0])=\dfrac{\lambda-1}{\lambda}\eta_2(g)^{-1}\overline{b}(g),$$
where $\lambda=\varepsilon_2(g_0)$. Since $\lambda \not\equiv 1\ \mathrm{mod}\ \mathfrak{m}$, we have $\overline{b}\left([g, g_0] \right)\otimes \eta_2(g)\dfrac{\lambda}{\lambda-1} \in \overline{b}\left(G \right) \otimes_{\Lambda_{\chi_1\chi_2^{-1}}} \mathbb{I}$. This completes the proof of Claim \ref{d1}. 
\end{proof}
$N_{\infty}/\mathbf{Q}(\mu_{Np^{\infty}})$ is unramified at $p$ by the equation (1). Since the conductor of $\chi_1\chi_2^{-1}$ is $Np$ under the condition (D), $N_{\infty}/\mathbf{Q}(\mu_{Np^{\infty}})$ is also unramified at the primes dividing $N$ by class field theory (see the proof of \cite[Lemma 6.1]{Wi90}). Thus $N_{\infty}/\mathbf{Q}(\mu_{Np^{\infty}})$ is everywhere unramified.

We fix the Iwasawa-Serre isomorphism as follows:
\begin{equation}\label{IS}
\mathbf{Z}_p[\chi_1\chi_2^{-1}][[\mathrm{Gal}\left(\mathbf{Q}\left(\mu_{Np^{\infty}} \right)/\mathbf{Q}\left(\mu_{Np} \right) \right)]] \stackrel{\sim}{\rightarrow} \Lambda_{\chi_1\chi_2^{-1}}, \tilde{\gamma} \mapsto u^{-1}(1+X)^{-1}.
\end{equation}
Then we have the following $\mathbb{I}$-homomorphisms:
\begin{equation}\label{y1}
X_{\infty}^{\chi_1\chi_2^{-1}}\otimes_{\Lambda_{\chi_1\chi_2^{-1}}}\mathbb{I} \twoheadrightarrow \overline{b}\left(G \right)\otimes_{\Lambda_{\chi_1\chi_2^{-1}}}\mathbb{I} \stackrel{\sim}{\rightarrow} B/I(\rho_{\mathscr{F}})B.
\end{equation}
By taking the Fitting ideal, we have the inclusion relation as follows: $$\mathrm{Fitt}_{\Lambda_{\chi_1\chi_2^{-1}}}(X_{\infty}^{\chi_1\chi_2^{-1}})\mathbb{I}=\mathrm{Fitt}_{\mathbb{I}}(X_{\infty}^{\chi_1\chi_2^{-1}}\otimes_{\Lambda_{\chi_1\chi_2^{-1}}}\mathbb{I}) \subset \mathrm{Fitt}_{\mathbb{I}}(B/I(\rho_{\mathscr{F}})B) \subset I(\rho_{\mathscr{F}}).$$By the Iwasawa main conjecture (Theorem of Mazur-Wiles) we have $$\mathrm{Fitt}_{\Lambda_{\chi_1\chi_2^{-1}}}(X_{\infty}^{\chi_1\chi_2^{-1}})=G_{\chi_1^{-1}\chi_2\omega}(u^2(1+X)-1)\Lambda_{\chi_1\chi_2^{-1}}=\hat{G}_{\chi_1^{-1}\chi_2}(X)\Lambda_{\chi_1\chi_2^{-1}}.$$ Thus $\hat{G}_{\chi_1^{-1}\chi_2}(X)\mathbb{I} \subset I(\rho_{\mathscr{F}})$. This completes the proof of (1) of the proposition. 

Similarly, we denote by $M_{\infty}\left(Np \right)$ the abelian extension of $\mathbf{Q}\left(\mu_{Np^{\infty}} \right)$ corresponding to $$\mathrm{Ker} \left(\overline{c} : \mathrm{Gal}\left(\overline{\mathbf{Q}}/\mathbf{Q}\left(\mu_{Np^{\infty}} \right) \right) \rightarrow C/I\left(\rho_{\mathscr{F}} \right)C, \ g \mapsto \overline{c}(g) \right)$$ and by $H=\mathrm{Gal}\left(M_{\infty}\left(Np \right)/\mathbf{Q}\left(\mu_{Np^{\infty}} \right) \right)$. Then $\overline{c}\left(H \right)$ is a $\Lambda_{\chi_1^{-1}\chi_2}$-module under the surjection

\begin{equation*}
\mathbf{Z}_p[[\mathrm{Gal}\left(\mathbf{Q}\left(\mu_{Np^{\infty}} \right)/\mathbf{Q} \right)]] \stackrel{\eta_1^{-1}\eta_2}{\longrightarrow} \Lambda_{\chi_1\chi_2^{-1}},\ u^{-1}\tilde{\gamma} \mapsto 1+X
\end{equation*}
and the map $\overline{c}\left(H \right) \displaystyle\otimes_{\Lambda_{\chi_1^{-1}\chi_2}} \mathbb{I} \rightarrow C/I(\rho_{\mathscr{F}})C$ induced by $\overline{c}$ is an isomorphism by the same arguments as in Claim \ref{d1}. Hence we have the surjective homomorphism as follows: 
\begin{equation}\label{v1}
Y_{\infty}^{\chi_1^{-1}\chi_2}\displaystyle\otimes_{\Lambda_{\chi_1^{-1}\chi_2}}\mathbb{I} \twoheadrightarrow C/I(\rho_{\mathscr{F}})C.
\end{equation}
Note that in the equation (\ref{v1}), we endowed $Y_{\infty}^{\chi_1^{-1}\chi_2}$ with the $\Lambda_{\chi_1\chi_2^{-1}}$-module structure under the isomorphism as follows:
$$\mathbf{Z}_p[\chi_1^{-1}\chi_2][[\mathrm{Gal}\left(\mathbf{Q}\left(\mu_{Np^{\infty}} \right)/\mathbf{Q}\left(\mu_{Np} \right) \right)]] \stackrel{\sim}{\rightarrow} \Lambda_{\chi_1^{-1}\chi_2}, \tilde{\gamma} \mapsto u(1+X).$$

By the equations (\ref{y1}) and (\ref{v1}), there exists a $g_B \in X_{\infty}$ (resp. $g_C \in Y_{\infty}$) such that $B/I(\rho_{\mathscr{F}})B$ (resp. $C/I(\rho_{\mathscr{F}})C$) is generated by $\overline{b}(g_B)$ (resp. $\overline{c}(g_C)$).  By Nakayama's lemma, $B$ (resp. $C$) is generated by $b(g_B)$ (resp. $c(g_C)$) over $\mathbb{I}$. This implies $I\left(\rho_{\mathscr{F}} \right)=BC=\left(b\left(g_B \right)c\left(g_C \right) \right)$.

\end{proof}

Define the Eisenstein ideal $I(\chi, \mathbb{I})$ the ideal of $\mathbf{T}(\chi, \mathbb{I})$ which is generated by $T(l)-1-\eta(\mathrm{Frob}_{l})$ for all primes $l \neq p$ and $T(p)-1$.

\begin{cor}\label{m}
Let the assumptions and the notations be as in Theorem \ref{yandong}. Assume the condition (R). We have $I\left(\rho_{\mathscr{F}} \right)=\hat{G}_{\chi}\left(X \right)\mathbb{I}$.
\end{cor}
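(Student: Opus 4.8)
The plan is to identify $I(\rho_{\mathscr{F}})$ with the image in $\mathbb{I}$ of the Eisenstein ideal $I(\chi,\mathbb{I})$, and then to compute that image using condition (R) together with the classical description of the Eisenstein ideal of the $\Lambda_{\chi}$-adic Hecke algebra by the $p$-adic $L$-function. First I would pin down the decomposition $\chi=\chi_1\chi_2$ in the $N=1$ setting. By Lemma \ref{4}(2) we have $\rho_{\mathscr{F}}(\mathfrak{m})\cong\overline{\mathbf{1}}\oplus\overline{\chi}$, and since $\psi_1$ is unramified and $\psi_2$ ramified at $p$ while $\chi_1,\chi_2$ in (D) have relatively prime conductors (so one of them has conductor $1$), the only possibility is $\chi_1=\mathbf{1}$, $\chi_2=\chi$. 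In particular $\hat{G}_{\chi_1^{-1}\chi_2}(X)=\hat{G}_{\chi}(X)$, so Proposition \ref{5}(1) already gives the inclusion $\hat{G}_{\chi}(X)\mathbb{I}\subseteq I(\rho_{\mathscr{F}})$; only the reverse inclusion remains.

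Next I would compare $I(\rho_{\mathscr{F}})$ with the Eisenstein ideal. Let $\lambda_{\mathscr{F}}:\mathbf{T}(\chi,\mathbb{I})\to\mathbb{I}$, $T(l)\mapsto c(l,\mathscr{F})$, be the eigenvalue homomorphism of $\mathscr{F}$; condition (R) gives $\mathbf{T}(\chi,\mathbb{I})=\mathbf{T}(\chi,\Lambda_{\chi})\otimes_{\Lambda_{\chi}}\mathbb{I}\cong\Lambda_{\chi}\otimes_{\Lambda_{\chi}}\mathbb{I}=\mathbb{I}$, and $\lambda_{\mathscr{F}}$ is this isomorphism. With $\eta_1=\mathbf{1}$, $\eta_2=\chi\kappa_{\mathrm{cyc}}\kappa$ as in Lemma \ref{4}, one has $a(g)\equiv\eta_1(g)$ and $d(g)\equiv\eta_2(g)\bmod I(\rho_{\mathscr{F}})$ for all $g$, hence $\det\rho_{\mathscr{F}}=ad-bc\equiv\eta_2\bmod I(\rho_{\mathscr{F}})$ and $\mathrm{tr}\rho_{\mathscr{F}}(\mathrm{Frob}_l)\equiv 1+\eta_2(\mathrm{Frob}_l)\bmod I(\rho_{\mathscr{F}})$; since also $c(p,\mathscr{F})=\varepsilon_1(\mathrm{Frob}_p)=a(\mathrm{Frob}_p)\equiv 1$, the map $\lambda_{\mathscr{F}}$ carries each generator $T(l)-1-\eta(\mathrm{Frob}_l)$ and $T(p)-1$ of $I(\chi,\mathbb{I})$ into $I(\rho_{\mathscr{F}})$, i.e.\ $\lambda_{\mathscr{F}}(I(\chi,\mathbb{I}))\subseteq I(\rho_{\mathscr{F}})$. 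For the opposite inclusion I would use that $\{\mathrm{Frob}_l:l\neq p\}$ is dense in $\mathrm{Gal}(\overline{\mathbf{Q}}/\mathbf{Q})$ by Chebotarev (valid since $N=1$), that $g\mapsto\mathrm{tr}\rho_{\mathscr{F}}(g)-\eta_1(g)-\eta_2(g)$ is continuous into $\mathbb{I}$, and that every ideal of the complete local Noetherian ring $\mathbb{I}$ is closed, to conclude that $\lambda_{\mathscr{F}}(I(\chi,\mathbb{I}))$ contains all the generators of the ideal $J=I(\rho_{\mathscr{F}})$ of Lemma \ref{4}. Hence $\lambda_{\mathscr{F}}(I(\chi,\mathbb{I}))=I(\rho_{\mathscr{F}})$, so, writing $\iota$ for the restriction $\mathbf{T}(\chi,\Lambda_{\chi})\cong\Lambda_{\chi}$ of $\lambda_{\mathscr{F}}$ and $\mathfrak{a}=\iota(I(\chi,\Lambda_{\chi}))$, we get $I(\rho_{\mathscr{F}})=\mathfrak{a}\mathbb{I}$ and the claim is reduced to $\mathfrak{a}=\hat{G}_{\chi}(X)\Lambda_{\chi}$.

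Finally I would prove this last identity. One inclusion, $\hat{G}_{\chi}(X)\Lambda_{\chi}\subseteq\mathfrak{a}$, follows from $\hat{G}_{\chi}(X)\mathbb{I}\subseteq I(\rho_{\mathscr{F}})=\mathfrak{a}\mathbb{I}$ by intersecting with $\Lambda_{\chi}$ and using that $\mathbb{I}$ is faithfully flat over $\Lambda_{\chi}$. For the other I would bring in the $\Lambda_{\chi}$-adic Eisenstein series $\mathcal{E}$ of type $(\mathbf{1},\chi)$: its constant term equals $\hat{G}_{\chi}(X)$ up to a unit of $\Lambda_{\chi}$ (the condition $\chi_1\neq\chi_2\omega$ ensuring $\mathcal{E}$ is non-exceptional), so $\mathcal{E}\bmod\hat{G}_{\chi}(X)$ is a $\Lambda_{\chi}/(\hat{G}_{\chi})$-adic cuspidal Hecke eigenform with $U(p)$-eigenvalue $1$; it therefore yields a surjective $\Lambda_{\chi}$-algebra homomorphism $\mathbf{T}(\chi,\Lambda_{\chi})\twoheadrightarrow\Lambda_{\chi}/(\hat{G}_{\chi})$ sending $T(l)\mapsto 1+\eta(\mathrm{Frob}_l)$ and $T(p)\mapsto 1$, hence killing $I(\chi,\Lambda_{\chi})$. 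Under $\iota:\mathbf{T}(\chi,\Lambda_{\chi})\cong\Lambda_{\chi}$ this surjection is forced to be the canonical quotient $\Lambda_{\chi}\to\Lambda_{\chi}/(\hat{G}_{\chi})$, whose kernel is $\hat{G}_{\chi}(X)\Lambda_{\chi}$, so $\mathfrak{a}=\iota(I(\chi,\Lambda_{\chi}))\subseteq\hat{G}_{\chi}(X)\Lambda_{\chi}$. Combining the two inclusions gives $\mathfrak{a}=\hat{G}_{\chi}(X)\Lambda_{\chi}$ and hence $I(\rho_{\mathscr{F}})=\hat{G}_{\chi}(X)\mathbb{I}$. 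The main obstacle is precisely this last step: computing the constant term of the $\Lambda_{\chi}$-adic Eisenstein series and realizing the Eisenstein Hecke eigensystem inside the cuspidal Hecke algebra modulo $\hat{G}_{\chi}(X)$. This is standard — it underlies Mazur--Wiles and is available in the work of Hida and Ohta, and one could instead simply quote the resulting statement $\mathbf{T}(\chi,\Lambda_{\chi})/I(\chi,\Lambda_{\chi})\cong\Lambda_{\chi}/(\hat{G}_{\chi}(X))$ — but it is the one genuinely arithmetic input not already prepared in the excerpt.
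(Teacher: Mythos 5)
Your proof is correct and follows essentially the same route as the paper: under (R) you identify $I(\rho_{\mathscr{F}})$ with the image of the Eisenstein ideal via Lemma \ref{4} and the Chebotarev density theorem, deduce $I(\rho_{\mathscr{F}})\subseteq\hat{G}_{\chi}(X)\mathbb{I}$ from the Eisenstein-ideal computation, and obtain the reverse inclusion from Proposition \ref{5}(1), exactly as the paper does. The only difference is cosmetic: where you descend to $\Lambda_{\chi}$ by faithful flatness and sketch the constant-term argument giving $\mathbf{T}(\chi,\Lambda_{\chi})/I(\chi,\Lambda_{\chi})\cong\Lambda_{\chi}/\left(\hat{G}_{\chi}(X)\right)$, the paper simply cites \cite[Theorem 4.1]{Wi90} for the isomorphism $\mathbb{I}/\hat{G}_{\chi}(X)\mathbb{I}\cong\mathbf{T}(\chi,\mathbb{I})/\left(I(\chi,\mathbb{I}),\hat{G}_{\chi}(X)\right)$, which is precisely the arithmetic input you describe.
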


\begin{proof}
Since $\mathbf{T}(\chi, \mathbb{I})=\mathbf{T}(\chi, \Lambda_{\chi}) \otimes_{\Lambda_{\chi}} \mathbb{I}$, $\mathbf{T}(\chi, \mathbb{I}) $ is isomorphic to $\mathbb{I}$ by the assumption (R). Since $N=1$, $I\left(\rho_{\mathscr{F}}  \right)$ is generated by $c(l, \mathscr{F})-1-\eta(\mathrm{Frob}_{l})$ for all primes $l \neq p$ by Lemma \ref{4} and the Chebotarev density theorem. We also have $c(p, \mathscr{F})-1=\varepsilon_1(\mathrm{Frob}_p)-1=a(\mathrm{Frob}_p)-1 \in I\left(\rho_{\mathscr{F}} \right)$ by Theorem \ref{w} and Lemma \ref{4}. Thus the canonical isomorphism $\mathbb{I} \rightarrow \mathbf{T}(\chi, \mathbb{I})$ sends $I\left(\rho_{\mathscr{F}} \right)$ to the Eisenstein ideal $I(\chi, \mathbb{I})$. On the other hand, the canonical homomorphism $$\mathbb{I}/\hat{G}_{\chi}(X) \mathbb{I} \rightarrow \mathbf{T}(\chi, \mathbb{I})/\left(I(\chi, \mathbb{I}), \hat{G}_{\chi}(X) \right)$$ is an isomorphism by \cite[Theorem 4.1]{Wi90}. This implies $I\left(\rho_{\mathscr{F}} \right) \subset \hat{G}_{\chi}\left(X \right)\mathbb{I}$. Hence they must be coincide by Proposition \ref{5}.
	
\end{proof}

The next corollary is obviously deduced from (2) of Proposition \ref{5}.

\begin{cor}\label{r}
Let the assumptions and the notations be as in Theorem \ref{yandong}. Assume the conditions (C) and (P). We have $I\left(\rho_{\mathscr{F}} \right)=\hat{G}_{\chi}\left(X \right)\mathbb{I}$.
	
\end{cor}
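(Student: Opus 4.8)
The plan is to deduce Corollary \ref{r} directly from part (2) of Proposition \ref{5} together with the assumption (P), closely mirroring the argument used for Corollary \ref{m}. First I would observe that by (2) of Proposition \ref{5}, the conditions (C) (cyclicity of $X_{\infty}^{\chi_1\chi_2^{-1}}$ and $Y_{\infty}^{\chi_1^{-1}\chi_2}$) guarantee that $I(\rho_{\mathscr{F}})$ is a \emph{principal} ideal of $\mathbb{I}$, say $I(\rho_{\mathscr{F}})=(\theta)$ for some $\theta\in\mathbb{I}$. By (1) of Proposition \ref{5} we have the inclusion $\hat{G}_{\chi_1^{-1}\chi_2}(X)\mathbb{I}\subset I(\rho_{\mathscr{F}})=(\theta)$, so $\theta$ divides $\hat{G}_{\chi_1^{-1}\chi_2}(X)$ in $\mathbb{I}$; note that here $\hat{G}_{\chi}(X)$ is shorthand for $\hat{G}_{\chi_1^{-1}\chi_2}(X)$ in the notation of the corollary statement, consistent with Corollary \ref{m}.

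Next I would invoke condition (P): the ideal $\hat{G}_{\chi_1^{-1}\chi_2}(X)\mathbb{I}$ is prime in $\mathbb{I}$. Since $\hat{G}_{\chi_1^{-1}\chi_2}(X)\mathbb{I}\subset(\theta)$ and the left-hand ideal is nonzero and prime, the divisibility $\theta\mid\hat{G}_{\chi_1^{-1}\chi_2}(X)$ forces either $(\theta)=\hat{G}_{\chi_1^{-1}\chi_2}(X)\mathbb{I}$ or $(\theta)=\mathbb{I}$. The latter is impossible because $I(\rho_{\mathscr{F}})\subset\mathfrak{m}$ is a proper ideal (this was recorded right after the statement of Lemma \ref{4}, and follows from $\rho_{\mathscr{F}}(\mathfrak{m})$ being reducible via Lemma \ref{2.6}). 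Hence $(\theta)=\hat{G}_{\chi_1^{-1}\chi_2}(X)\mathbb{I}$, i.e. $I(\rho_{\mathscr{F}})=\hat{G}_{\chi}(X)\mathbb{I}$, as claimed.

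Alternatively — and this is the phrasing the ``obviously deduced'' remark suggests — one can argue purely via inclusions without explicitly extracting a generator: by (1) of Proposition \ref{5} we have $\hat{G}_{\chi_1^{-1}\chi_2}(X)\mathbb{I}\subset I(\rho_{\mathscr{F}})$; by (2) the ideal $I(\rho_{\mathscr{F}})$ is principal; and any principal ideal sitting between a nonzero prime ideal $\mathfrak{p}=\hat{G}_{\chi_1^{-1}\chi_2}(X)\mathbb{I}$ and the whole ring, in a Noetherian local domain, must equal either $\mathfrak{p}$ or $\mathbb{I}$ (because $\mathfrak{p}\subset(\theta)$ with $\mathfrak{p}$ prime gives $(\theta)\subset\mathfrak{p}$ once $\theta\in\mathfrak{p}$, which holds unless $(\theta)=\mathbb{I}$; more precisely, write a generator $\pi$ of $\mathfrak{p}$ as $\pi=\theta x$, then since $\mathfrak{p}$ is prime and $\pi\in\mathfrak{p}$, either $\theta\in\mathfrak{p}$, whence $(\theta)=\mathfrak{p}$, or $x\in\mathfrak{p}$, whence $x=\theta y\pi$ and $\pi(1-\theta y)=0$ forcing $\theta y$ a unit). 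Excluding $(\theta)=\mathbb{I}$ by properness of $I(\rho_{\mathscr{F}})$ again yields equality.

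I do not anticipate a genuine obstacle here: all the substantive work — the two inclusions, the principality, and the Iwasawa-main-conjecture input identifying the relevant Fitting ideal with $\hat{G}_{\chi_1^{-1}\chi_2}(X)\Lambda_{\chi_1\chi_2^{-1}}$ — is already carried out in Proposition \ref{5}. The only point requiring a modicum of care is the clean ring-theoretic step that a principal ideal containing a nonzero prime ideal and contained in the maximal ideal must coincide with that prime; this is elementary commutative algebra in the integrally closed Noetherian local domain $\mathbb{I}$, and it is exactly where hypothesis (P) is consumed. Accordingly the proof will be short, essentially a one-paragraph citation of Proposition \ref{5}(1), (2) plus (P) and the properness $I(\rho_{\mathscr{F}})\subsetneq\mathbb{I}$.
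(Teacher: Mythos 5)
Your proof is correct and follows essentially the same route as the paper, which simply declares the corollary ``obviously deduced from (2) of Proposition \ref{5}'': your combination of Proposition \ref{5}(1) (the inclusion $\hat{G}_{\chi_1^{-1}\chi_2}(X)\mathbb{I}\subset I(\rho_{\mathscr{F}})$), Proposition \ref{5}(2) (principality under (C)), the properness $I(\rho_{\mathscr{F}})\subset\mathfrak{m}$, and the primality hypothesis (P) is exactly the intended deduction, and you correctly read $\hat{G}_{\chi}(X)$ in the statement as $\hat{G}_{\chi_1^{-1}\chi_2}(X)$. The only blemish is a harmless slip in the ring-theoretic aside (from $x\in(\pi)$ one should write $x=y\pi$, giving $\pi=\theta y\pi$ and hence $\theta$ a unit), which does not affect the argument.
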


Now we prove Theorem \ref{yandong}. 

\begin{pf}
For any $\varphi \in \mathfrak{X}_{\mathbb{I}}$, let $\varpi_{\varphi}$ be a fixed uniformizer of $\varphi({\mathbb{I}})$. Then

$$\mathrm{ord}_{\varpi_{\varphi}}(\varphi\left(I\left(\rho_{\mathscr{F}} \right) \right) \leq \mathrm{ord}_{\varpi_{\varphi}}\left(\hat{G}_{\chi_1^{-1}\chi_2}(\zeta_{\varphi}u^{k_{\varphi}-2}-1) \right)$$by (1) of Proposition \ref{5}. Since $\chi_1 \neq \chi_2\omega$, the character $\chi_{\zeta_{\varphi}}\chi_1^{-1}\chi_2\omega$ does not factor through $\mathrm{Gal}\left(\mathbf{Q}_{\infty}/\mathbf{Q} \right)$. Thus $$L_p(1-k_{\varphi}, \chi_{\zeta_{\varphi}}\chi_1^{-1}\chi_2\omega)=\hat{G}_{\chi_1^{-1}\chi_2}(\zeta_{\varphi}u^{k_{\varphi}-2}-1)$$ by Section 2.3. Combine Proposition \ref{2.8} and Lemma \ref{1} we have $$\mathrm{ord}_{\varpi_{\varphi}}(\varphi\left(I\left(\rho_{\mathscr{F}} \right) \right))+1 =\sharp \mathcal{L}(\rho_{f_{\varphi}}),$$then $$\sharp \mathcal{L}(\rho_{f_{\varphi}}) \leq \mathrm{ord}_{\varpi_{\varphi}}\left(L_p(1-k_{\varphi}, \chi_{\zeta_{\varphi}}\chi_1^{-1}\chi_2\omega) \right)+1.$$

If we assume the condition (R) or both of the conditions (C) and (P), we have $I\left(\rho_{\mathscr{F}} \right)=\hat{G}_{\chi_1^{-1}\chi_2}\left(X \right)\mathbb{I}$ by Corollary \ref{m} and Corollary \ref{r}. Then $$\sharp \mathcal{L}(\rho_{f_{\varphi}})=\mathrm{ord}_{\varpi_{\varphi}}\left(L_p(1-k_{\varphi}, \chi_{\zeta_{\varphi}}\chi_1^{-1}\chi_2\omega) \right)+1.$$ Specially when (R) satisfied, $\chi_1=1$ and $\chi_2=\chi$ by Lemma \ref{4}. This completes the proof of Theorem \ref{yandong}.

\end{pf}

\subsection{Discussion of the variation of $\sharp\mathcal{L}\left(\rho_{f_{\varphi}} \right)$ by means of $L_p$}

We use the following lemma to prove Corollary \ref{yandong2}. 
\begin{lem}\label{L}
Let $\mathcal{O}$ be the ring of integers of a finite extension of $\mathbf{Q}_p$ and $F(X) \in \mathcal{O}[X]$ a distinguished polynomial. Then there exists an integer $r \in \mathbf{Z}_{\geq 0}$ such that for any $(k, \zeta) \in \mathbf{Z}_{\geq 0} \times \left(\mu_{p^{\infty}} \setminus \mu_{p^r} \right)$, $$\mathrm{ord}_p\left(F\left(\zeta u^k-1 \right) \right)=\dfrac{\mathrm{deg}F(X)}{\left(p-1 \right)p^{r_{\zeta}-1}},$$ where $\zeta$ is a primitive $p^{r_{\zeta}}$-th root of unity.

\end{lem}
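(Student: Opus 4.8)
The plan is to reduce the statement to the elementary valuation computation for $\zeta u^{k}-1$, combined with the fact that a distinguished polynomial has all its roots in the maximal ideal. Throughout I normalize the valuation $\mathrm{ord}_p$ on $\overline{\mathbf{Q}}_p$ by $\mathrm{ord}_p(p)=1$. One may assume $d:=\mathrm{deg}\,F(X)\geq 1$, since for $d=0$ we have $F\equiv 1$ and both sides vanish. After enlarging $\mathcal{O}$ (the valuation extends uniquely to any finite extension), factor $F(X)=\prod_{i=1}^{d}(X-\alpha_i)$; because $F$ is distinguished, each $\alpha_i$ lies in the maximal ideal, so $c:=\min_{1\leq i\leq d}\mathrm{ord}_p(\alpha_i)>0$. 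I then fix $r\in\mathbf{Z}_{\geq 0}$ large enough that $\tfrac{1}{(p-1)p^{r}}<c$; crucially this $r$ depends only on $F$ and $p$.

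The second step is to compute $\mathrm{ord}_p(\zeta u^{k}-1)$ for $\zeta$ a primitive $p^{r_\zeta}$-th root of unity with $r_\zeta\geq 1$ and arbitrary $k\in\mathbf{Z}_{\geq 0}$. Writing $\zeta u^{k}-1=(\zeta-1)+\zeta(u^{k}-1)$ and using $u\in 1+p\mathbf{Z}_p$, one has $\mathrm{ord}_p(\zeta(u^{k}-1))=\mathrm{ord}_p(u^{k}-1)\geq 1$ (with the convention $+\infty$ when $k=0$), whereas the standard computation with the cyclotomic polynomial $\Phi_{p^{r_\zeta}}$, which has degree $\phi(p^{r_\zeta})=(p-1)p^{r_\zeta-1}$ and satisfies $\Phi_{p^{r_\zeta}}(1)=p$, gives $\mathrm{ord}_p(\zeta-1)=\tfrac{1}{(p-1)p^{r_\zeta-1}}\leq\tfrac{1}{p-1}<1$. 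Hence the two summands have distinct valuations and $\mathrm{ord}_p(\zeta u^{k}-1)=\tfrac{1}{(p-1)p^{r_\zeta-1}}$.

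For the conclusion, take $\zeta\in\mu_{p^{\infty}}\setminus\mu_{p^{r}}$, so $r_\zeta\geq r+1$ and therefore $\mathrm{ord}_p(\zeta u^{k}-1)=\tfrac{1}{(p-1)p^{r_\zeta-1}}\leq\tfrac{1}{(p-1)p^{r}}<c\leq\mathrm{ord}_p(\alpha_i)$ for every $i$. The ultrametric inequality then forces $\mathrm{ord}_p(\zeta u^{k}-1-\alpha_i)=\mathrm{ord}_p(\zeta u^{k}-1)$ for each $i$, and summing over $i$ yields
$$\mathrm{ord}_p\left(F(\zeta u^{k}-1)\right)=\sum_{i=1}^{d}\mathrm{ord}_p(\zeta u^{k}-1-\alpha_i)=\frac{d}{(p-1)p^{r_\zeta-1}}=\frac{\mathrm{deg}\,F(X)}{(p-1)p^{r_\zeta-1}}.$$

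The argument is entirely elementary; the one point meriting care is uniformity, namely that a single $r$ works simultaneously for all $k$ and all $\zeta$ with $r_\zeta>r$. This holds because the roots $\alpha_i$, and hence the constant $c$, depend on $F$ alone and not on $(k,\zeta)$, so I do not anticipate any real obstacle beyond the bookkeeping with the normalization of $\mathrm{ord}_p$ and the classical valuation of $1-\zeta$.
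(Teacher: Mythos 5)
Your proof is correct and follows essentially the same route as the paper: factor the distinguished polynomial, choose $r$ so that every root $\alpha_i$ has valuation exceeding the valuation of $\zeta-1$ for $\zeta \notin \mu_{p^r}$, compute $\mathrm{ord}_p(\zeta u^k-1)=\mathrm{ord}_p(\zeta-1)=\tfrac{1}{(p-1)p^{r_\zeta-1}}$ via the decomposition $\zeta(u^k-1)+(\zeta-1)$, and conclude by the ultrametric inequality term by term. The only cosmetic difference is that the paper writes $u^k$ as $\exp(k\cdot\log u)$; your added remarks (roots of a distinguished polynomial lie in the maximal ideal, the trivial degree-zero case, uniformity of $r$ in $(k,\zeta)$) are accurate and harmless.
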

\begin{proof}
Decompose $$F(X)=\displaystyle\prod^n_{i=1}(X-\alpha_i)$$ and choose an integer $r \geq 0$ such that $\mathrm{ord}_p(\alpha_i) > \dfrac{1}{(p-1)p^{r -1}}$ for any $\alpha_i$. Then for any $(k, \zeta) \in \mathbf{Z}_{\geq 0} \times \left(\mu_{p^{\infty}} \setminus \mu_{p^r} \right)$, we have 

\begin{align*}
\mathrm{ord}_p(\zeta u^k-1-\alpha_i)&=\mathrm{ord}_p\left(\zeta\left(\mathrm{exp}\left(k\cdot\mathrm{log}(u)\right)-1\right)+(\zeta -1)-\alpha_i \right)\\
&=\dfrac{1}{(p-1)p^{r_{\zeta}-1}},
\end{align*}
where exp and log are the $p$-adic exponential and logarithm functions. Thus $$\mathrm{ord}_p\left(F\left(\zeta u^k-1 \right) \right)=\displaystyle\sum_{i=1}^n \mathrm{ord}_p(\zeta u^k-1-\alpha_i)=\dfrac{\mathrm{deg}F(X)}{\left(p-1 \right)p^{r_{\zeta}-1}}.$$

\end{proof}

Let us return to the proof of Corollary \ref{yandong2}. 

\begin{pf2}
For (1) it is sufficient to show that there exists an $r \in \mathbf{Z}_{\geq 0}$ such that for any $\varphi \in \mathfrak{X}_{\mathbb{I}}^{\left(r \right)}$, $$\mathrm{ord}_{\varpi_{\varphi}}(L_p(1-k_{\varphi}, \chi_{\zeta_{\varphi}}\chi_1^{-1}\chi_2\omega)) \leq \mathrm{rank}_{\Lambda_{\chi}}\mathbb{I}\cdot\mathrm{deg}\hat{G}_{\chi_1^{-1}\chi_2}^{*}(X).$$ 
Since $\hat{G}_{\chi_1^{-1}\chi_2}(X)$ is not divisible by a uniformizer of $\mathbf{Z}_p[\chi_1^{-1}\chi_2]$ by the Ferrero-Washington's theorem \cite{FW77}, the Weierstrass preparation theorem enables one to decompose

$$\hat{G}_{\chi_1^{-1}\chi_2}(X)=\hat{G}_{\chi_1^{-1}\chi_2}^{*}(X) U(X),$$where $\hat{G}_{\chi_1^{-1}\chi_2}^{*}(X)$ is a distinguished polynomial and $U(X)$ a unit in $\Lambda_{\chi_1^{-1}\chi_2}$. We apply Lemma \ref{L} to $\hat{G}_{\chi_1^{-1}\chi_2}^{*}(X)$ . Then there exists an $r \in \mathbf{Z}_{\geq 0}$ such that for any $\varphi \in \mathfrak{X}_{\mathbb{I}}^{(r)}$,

\begin{align}\label{10}
\mathrm{ord}_p\left(\varphi\left(\hat{G}_{\chi_1^{-1}\chi_2}^{*}(X) \right) \right)=\dfrac{\mathrm{deg}\hat{G}_{\chi_1^{-1}\chi_2}^{*}(X)}{\left(p-1 \right)p^{r_{\varphi}-1}},
\end{align}
where $\varphi \mid_{\Lambda_{\chi}}=\nu_{k_{\varphi}, \zeta_{\varphi}}$ and $\zeta_{\varphi}$ is a primitive $p^{r_{\varphi}}$-th root of unity such that $r_{\varphi} > r$.

Let us take a $\varphi \in \mathfrak{X}_{\mathbb{I}}^{(r)}$. For the extension of the discrete valuation rings $\varphi(\mathbb{I}) \supset \mathbf{Z}_p[\chi][\zeta_{\varphi}]$, since $[\mathrm{Frac}(\varphi(\mathbb{I})) : \mathrm{Frac}(\mathbf{Z}_p[\chi][\zeta_{\varphi}])] \leq \mathrm{rank}_{\Lambda_{\chi}}\mathbb{I}$, so is the ramification index $e_{\varphi}$. Since $r_{\varphi} > 0$, the ramification index in the extension $\mathbf{Z}_p[\chi][\zeta_{\varphi}] \supset \mathbf{Z}_p$ is $\left(p-1 \right)p^{r_{\varphi}-1}$. Then by the equation (\ref{10}), we have

\begin{align*}
\mathrm{ord}_{\varpi_{\varphi}}(L_p(1-k_{\varphi}, \chi_{\zeta_{\varphi}}\chi_1^{-1}\chi_2\omega))&=\mathrm{ord}_{\varpi_{\varphi}}\left(\varphi\left(\hat{G}_{\chi_1^{-1}\chi_2}^{*}(X) \right) \right)\\
&=e_{\varphi}(p-1)p^{r_{\varphi}-1}\dfrac{\mathrm{deg}\hat{G}_{\chi_1^{-1}\chi_2}^{*}(X)}{\left(p-1 \right)p^{r_{\varphi}-1}} \\
&=e_{\varphi}\mathrm{deg}\hat{G}_{\chi_1^{-1}\chi_2}^{*}(X)\\
&\leq \mathrm{rank}_{\Lambda_{\chi}}\mathbb{I}\cdot \mathrm{deg}\hat{G}^{*}_{\chi_1^{-1}\chi_2}(X).
\end{align*}
This completes the proof of (1) of Corollary \ref{yandong2} and (2) is easily deduced from (1). 

Now we assume that $\mathbb{I}$ is isomorphic to $\mathcal{O}[[X]]$ with $\mathcal{O}$ the ring of integers of a finite extension $K$ of $\mathbf{Q}_p$. We choose a uniformizer $\varpi$ of $\mathcal{O}$. Let $f_1(X), \cdots, f_m(X)$ be the generators of $I\left(\rho_{\mathscr{F}} \right)$. For each $i=1, \cdots, m$, decompose $$f_i(X)=\varpi^{\mu_i}P_i(X)U_i(X),$$ where $P_i(X)$ is a distinguished polynomial and $U_i(X)$ a unit in $\mathcal{O}[[X]]$. Let $$F(X)=\displaystyle\prod_{i=1}^m P_i(X).$$ We apply Lemma \ref{L} to $F(X)$. Then there exists an $r_1 \in \mathbf{Z}_{\geq 0}$ such that for any $\varphi \in \mathfrak{X}_{\mathbb{I}}^{(r_1)}$, 
\begin{align}\label{r_0}
\mathrm{ord}_p\varphi\left(f_i(X) \right)=\mu_i\mathrm{ord}_p\varpi+\dfrac{\mathrm{deg}P_i(X)}{(p-1)p^{r_{\varphi}-1}}.
\end{align}

We denote by $\zeta_m$ a primitive $m$-th root of unity for a positive integer $m$. Let $r_2$ be the largest integer $j$ such that $\zeta_{p^j} \in K$ and let us take a $\varphi \in \mathfrak{X}_{\mathbb{I}}^{(r_2)}$. Write $K_{\varphi}=\mathrm{Frac}\left(\varphi\left(\mathbb{I} \right) \right)$ for short. First we assume $r_2 > 0$. Then we have $K \cap \mathbf{Q}_p\left(\zeta_{\varphi} \right)=\mathbf{Q}_p\left(\zeta_{p^{r_2}} \right)$ and $\mathrm{Gal}\left(K_{\varphi}/\mathbf{Q}_p\left(\zeta_{\varphi} \right) \right) \stackrel{\sim}{\rightarrow} \mathrm{Gal}\left(K/\mathbf{Q}_p\left(\zeta_{p^{r_2}} \right) \right).$ Since $\mathbb{I}$ is isomorphic to $\mathcal{O}[[X]]$, the residue degree in $K_{\varphi}/\mathbf{Q}_p\left(\zeta_{\varphi} \right)$ and $K/\mathbf{Q}_p\left(\zeta_{p^{r_2}} \right)$ coincide, so are the ramification index. Hence the ramification index of $K_{\varphi}$ over $\mathbf{Q}_p$ is $e(p-1)p^{r_{\varphi}-1}$, where $e$ is the ramification index of $K$ over $\mathbf{Q}_p(\zeta_{p^{r_2}})$. If $r_2=0$, we may enlarge $\mathcal{O}$ to $\mathcal{O}^{\prime}=\mathcal{O}[\zeta_p]$ since $\mathcal{O}[\zeta_{\varphi}]=\mathcal{O}^{\prime}[\zeta_{\varphi}]$ for $\varphi \in \mathfrak{X}_{\mathbb{I}}^{(r_2)}$. Then the argument above also holds, i.e. there exists a constant $e$ such that the ramification index of $K_{\varphi}$ over $\mathbf{Q}_p$ is $e(p-1)p^{r_{\varphi}-1}$. Note that $e$ is the ramification index of $K\left(\zeta_p \right)/\mathbf{Q}_p\left(\zeta_p \right)$ if $r_2=0$.

Since $\hat{G}_{\chi_1^{-1}\chi_2}(X)\mathbb{I} \subset I\left(\rho_{\mathscr{F}} \right)$ and $\varpi \nmid \hat{G}_{\chi_1^{-1}\chi_2}(X)$, we have that $$\mathcal{Z}=\set{ i=1, \cdots, m | \mu_i=0}$$ is nonempty. Let $$l=\mathrm{min}\set{ \mathrm{deg}P_i(X) | i \in \mathcal{Z} }$$ and let us take an $r_3 \in \mathbf{Z}_{\geq 0}$ such that for any $i \notin \mathcal{Z}$, 
\begin{align}\label{r2}
(p-1)p^{r_3-1}\mu_i\mathrm{ord}_p\varpi+\mathrm{deg}P_i(X) \geq l.
\end{align}
Let $r^{\prime}=\mathrm{max}\set{r_1, r_2, r_3}$ and let us take a $\varphi \in \mathfrak{X}_{\mathbb{I}}^{(r^{\prime})}$. Then we have 

\begin{align}\label{s1}
\sharp\mathcal{L}(\rho_{f_{\varphi}})=\mathrm{min}\set{\mathrm{ord}_{\varpi_{\varphi}}\varphi\left(f_i(X) \right) | 1 \leq i \leq n}+1.
\end{align}
Since the ramification index of $K_{\varphi}$ over $\mathbf{Q}_p$ is $e(p-1)p^{r_{\varphi}-1}$, we have

\begin{align}\label{s2}
\mathrm{ord}_{\varpi_{\varphi}}\varphi\left(f_i(X) \right)=e(p-1)p^{r_{\varphi}-1}\mu_i\mathrm{ord}_p\varpi+e\cdot\mathrm{deg}P_i(X)
\end{align}
for each $1 \leq i \leq n$ by the equation (\ref{r_0}). Thus
\begin{align}\label{s3}
\mathrm{min}\set{\mathrm{ord}_{\varpi_{\varphi}}\varphi\left(f_i(X) \right) | 1 \leq i \leq n}=el
\end{align}
by the equation (\ref{r2}). 
Combine the equation (\ref{s1}) and (\ref{s3}), we have that $\sharp\mathcal{L}(\rho_{f_{\varphi}})=el+1$ is constant. This completes the proof of (3) of Corollary \ref{yandong2}.

Now we assume the condition (R) or both of the conditions (C) and (P). We have $\sharp\mathcal{L}(\rho_{f_{\varphi}})=\mathrm{ord}_{\varpi_{\varphi}}\left(L_p\left(1-k_{\varphi}, \chi_{\zeta_{\varphi}}\chi_1^{-1}\chi_2\omega \right) \right)+1$ by (2) and (3) of Theorem \ref{yandong}. 
We fix a $\zeta \in \mu_{p^{\infty}}$. First we assume that $L_p(1-s, \chi_{\zeta}\chi_1^{-1}\chi_2\omega)$ has a zero $s_0 \in \mathbf{Z}_p$. Let $\left\{k_n \right\}$ be the sequence defined as follows: 

\begin{list}{}{}
\item[(i)]$k_n=s_0+p^n$ if $s_0 \in \mathbf{Z}$,
\item[(ii)]$k_n=\displaystyle \sum^{n}_{i=0}a_i p^i$ if $s_0=\displaystyle \sum^{\infty}_{i=0}a_i p^i$ such that $0 \leq a_i \leq p-1$ and $s_0 \not\in \mathbf{Z}$.
\end{list}
Then $\sharp \mathcal{L}(\rho_{f_{\varphi}})$ is unbounded when $k_{\varphi}$ runs over the sequence $\bigl\{k_n \bigr\}$.

Now we suppose that $L_p \left(s, \chi_{\zeta}\chi_1^{-1}\chi_2\omega \right)$ has no zero in $\mathbf{Z}_p$ and we prove that $\mathrm{ord}_{\varpi_{\varphi}}\left(L_p\left(1-k_{\varphi}, \chi_{\zeta}\chi_1^{-1}\chi_2\omega \right) \right)$ is bounded by contradiction. Suppose that $\mathrm{ord}_{\varpi_{\varphi}}\left(L_p\left(1-k_{\varphi}, \chi_{\zeta}\chi_1^{-1}\chi_2\omega \right) \right)$ is unbounded. Then there exists a sequence $\left\{k_n \right\}$ such that $k_n \geq 2$ and $$\lim_{n \to \infty}L_p \left(1-k_n, \chi_{\zeta}\chi_1^{-1}\chi_2\omega \right)=0.$$Since $\mathbf{Z}_p$ is compact and $L_p$ is a continuous function, $L_p \left(s, \chi_{\zeta}\chi_1^{-1}\chi_2\omega \right)$ must have zero in $\mathbf{Z}_p$ which contradicts to our assumption. Hence $\sharp\mathcal{L}(\rho_{f_{\varphi}})$ is bounded. This completes the proof of (4) of Corollary \ref{yandong2}. 

\end{pf2}

\subsection{Proof of Corollary \ref{yandong3}}
We denote by $\mathbb{F}$ the residue field $\mathbb{I}/\mathfrak{m}$. The following lemma is a generalization of the arguments in \cite[Appendix I ]{Maz} for more general settings.
\begin{lem}\label{Maz}
Let the assumptions and the notations be as in Theoren \ref{yandong}. Assume the conditions (D), (C), (P) and (F). Let $\mathcal{T}$ be a stable lattice which is free over $\mathbb{I}$. Then $\mathcal{T}\otimes_{\mathbb{I}} \varphi(\mathbb{I})$ is a mod $\varpi_{\varphi}$ not semi-simple lattice for any $\varphi \in \mathfrak{X}_{\mathbb{I}}$.

\end{lem}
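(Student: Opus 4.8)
The plan is to reduce the statement to the single assertion that the residual representation $\bar\rho_{\mathcal T}$ of $\mathrm{Gal}(\overline{\mathbf Q}/\mathbf Q)$ on $\mathcal T/\mathfrak m\mathcal T$ over $\mathbb F$ is \emph{not} the split sum $\overline\chi_1\oplus\overline\chi_2$. Because $\mathcal T$ is free over $\mathbb I$ by (F) and $\varphi\colon\mathbb I\to\varphi(\mathbb I)$ is a local homomorphism, $\mathcal T\otimes_{\mathbb I}\varphi(\mathbb I)$ is free of rank two over $\varphi(\mathbb I)$ and its reduction modulo $\varpi_\varphi$ is $\bar\rho_{\mathcal T}\otimes_{\mathbb F}\mathbb F_\varphi$, where $\mathbb F_\varphi=\varphi(\mathbb I)/(\varpi_\varphi)$ is a finite extension of $\mathbb F$. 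By the Brauer--Nesbitt theorem and the hypothesis $\rho_{\mathscr F}(\mathfrak m)\cong\overline\chi_1\oplus\overline\chi_2$ (with $\overline\chi_1\ne\overline\chi_2$, since $\overline\chi_1$ is unramified and $\overline\chi_2$ ramified at $p$), the semisimplification of $\bar\rho_{\mathcal T}$ is $\overline\chi_1\oplus\overline\chi_2$; hence $\bar\rho_{\mathcal T}$ is reducible, and it fails to be semi-simple precisely when it is not split, and likewise for $\bar\rho_{\mathcal T}\otimes_{\mathbb F}\mathbb F_\varphi$. Since for a reducible two-dimensional representation with distinct Jordan--Hölder factors the extension class lies in an $H^1$ that injects, via $x\mapsto x\otimes 1$, into the corresponding group over the faithfully flat extension $\mathbb F_\varphi$, non-splitness of $\bar\rho_{\mathcal T}$ implies non-splitness of $\bar\rho_{\mathcal T}\otimes_{\mathbb F}\mathbb F_\varphi$. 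Thus it suffices to prove that $\bar\rho_{\mathcal T}$ is not split.

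Suppose for contradiction that $\bar\rho_{\mathcal T}\cong\overline\chi_1\oplus\overline\chi_2$. Fix $g_0\in I_p$ with $\overline\chi_1(g_0)\ne\overline\chi_2(g_0)$, which exists because $\overline\chi_1\mid_{I_p}=1\ne\overline\chi_2\mid_{I_p}$. First I would choose an $\mathbb F$-basis of $\mathcal T/\mathfrak m\mathcal T$ diagonalizing $\bar\rho_{\mathcal T}$, lift it to $\mathcal T$, and then — applying Hensel's lemma to the characteristic polynomial of $\rho_{\mathcal T}(g_0)$, whose two roots reduce to the distinct values $\overline\chi_i(g_0)$, together with Nakayama's lemma — replace it by an $\mathbb I$-basis $\{w_1,w_2\}$ of $\mathcal T$ which is adapted to $g_0$ in the sense of Lemma~\ref{2.6} and whose reduction still diagonalizes $\bar\rho_{\mathcal T}$. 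Writing $\rho_{\mathcal T}(g)=\begin{pmatrix}a(g)&b(g)\\ c(g)&d(g)\end{pmatrix}$ in this basis gives $b(g),c(g)\in\mathfrak m$ for all $g$. Let $\mathfrak b$, $\mathfrak c$ be the ideals of $\mathbb I$ generated by all $b(g)$, resp. all $c(g)$; they are contained in $\mathfrak m$, nonzero (otherwise $\langle w_1\rangle$ or $\langle w_2\rangle$ would be $\mathrm{Gal}(\overline{\mathbf Q}/\mathbf Q)$-stable over $\mathrm{Frac}(\mathbb I)$, against irreducibility of $\rho_{\mathscr F}$), and $\mathfrak b\mathfrak c=I(\rho_{\mathscr F})$ by Definition~\ref{3.2.2} together with the fact, noted there via Lemma~\ref{2.6}, that the reducibility ideal does not depend on the choice of adapted basis.

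The contradiction then comes from condition (P). By Corollary~\ref{r} (which invokes (D), (C), (P)) we have $I(\rho_{\mathscr F})=\hat G_{\chi_1^{-1}\chi_2}(X)\mathbb I=:\mathfrak p$, a nonzero principal prime ideal of $\mathbb I$. If $\mathfrak b$ and $\mathfrak c$ both lay in $\mathfrak p$, then $\mathfrak p=\mathfrak b\mathfrak c\subseteq\mathfrak p^2\subsetneq\mathfrak p$, which is absurd; so, say, $\mathfrak c\not\subseteq\mathfrak p$. As every product $b(g)c(g')$ lies in $\mathfrak p$ and $\mathfrak p$ is prime, this forces $\mathfrak b\subseteq\mathfrak p$; combined with $\mathfrak p=\mathfrak b\mathfrak c\subseteq\mathfrak b$ we get $\mathfrak b=\mathfrak p$, hence $\mathfrak p=\mathfrak p\mathfrak c$, and cancelling a generator of $\mathfrak p$ in the domain $\mathbb I$ yields $\mathfrak c=\mathbb I$, contradicting $\mathfrak c\subseteq\mathfrak m$. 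Therefore $\bar\rho_{\mathcal T}$ is not split, and by the first paragraph $\mathcal T\otimes_{\mathbb I}\varphi(\mathbb I)$ is mod $\varpi_\varphi$ not semi-simple for every $\varphi\in\mathfrak X_{\mathbb I}$. The points that need the most care are the equality $\mathfrak b\mathfrak c=I(\rho_{\mathscr F})$ — one must check that $\{w_1,w_2\}$ genuinely is a Lemma~\ref{2.6}-basis and invoke the basis-independence of the reducibility ideal — and the base-change compatibility of the first paragraph; given those, the closing ideal computation is immediate from primality of $\mathfrak p$.
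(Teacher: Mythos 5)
Your proof is correct, but it takes a genuinely different route from the paper. The paper never looks at the reduction of $\mathcal{T}$ modulo $\mathfrak{m}$: instead it picks a basis of $\mathcal{T}\otimes_{\mathbb I}\mathbb I/\hat G_{\chi_1^{-1}\chi_2}(X)\mathbb I$ exhibiting a stable line, lifts it to an $\mathbb I$-basis $\tilde v_1,\tilde v_2$ of $\mathcal{T}$, and forms the second free stable lattice $\mathcal{T}'=\mathbb I\tilde v_1\oplus\hat G_{\chi_1^{-1}\chi_2}(X)\mathbb I\,\tilde v_2$ with $\mathcal{T}/\mathcal{T}'\cong\mathbb I/\hat G_{\chi_1^{-1}\chi_2}(X)\mathbb I$; specializing at $\varphi$ and using $I(\rho_{f_\varphi})=\varphi(I(\rho_{\mathscr F}))=\varphi(\hat G_{\chi_1^{-1}\chi_2})$ (Lemma \ref{1}, Corollary \ref{r}), it gets $d([T],[T'])=n$ with $\sharp\mathcal{L}(\rho_{f_\varphi})=n+1$, so $[T]$ is an extremity of the segment $\mathcal{X}(\rho_{f_\varphi})$ and hence has non-semisimple reduction by (3) and (6) of Proposition \ref{2.c}. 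You instead prove the stronger integral statement that $\mathcal{T}/\mathfrak m\mathcal{T}$ is already non-split: a split reduction would give, in a $g_0$-adapted $\mathbb I$-basis of $\mathcal{T}$, ideals $\mathfrak b,\mathfrak c\subseteq\mathfrak m$ with $\mathfrak b\mathfrak c=I(\rho_{\mathscr F})$, which is impossible for the nonzero principal prime $\hat G_{\chi_1^{-1}\chi_2}(X)\mathbb I$ furnished by Corollary \ref{r} and (P); you then transport non-splitness to each specialization via the injection of extension classes under $\mathbb F\to\mathbb F_\varphi$. Your argument is closer in spirit to Mazur's Appendix I (which the paper says this lemma generalizes), avoids the tree combinatorics and Lemma \ref{1} entirely, and uses primality of $I(\rho_{\mathscr F})$ more directly; its delicate points are exactly the two you flag — that your eigenbasis of $\rho_{\mathscr F}(g_0)$ really computes $I(\rho_{\mathscr F})$ (here the two adapted bases differ by diagonal scaling, so the products $b(g)c(g')$ are literally unchanged) and the base-change step, both of which check out. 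The paper's route, by contrast, buys the explicit neighboring lattice $\mathcal{T}'$ and the identification of $[T]$ as an endpoint of the segment, which is also what its proof of Corollary \ref{yandong3} then exploits; both routes rest on Corollary \ref{r} and condition (P).
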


\begin{proof}

We have $I\left(\rho_{\mathscr{F}} \right)=\hat{G}_{\chi_1^{-1}\chi_2}(X)\mathbb{I}$ under the conditions (D), (C) and (P) by Corollary \ref{r}. Let us take a stable lattice $\mathcal{T} \cong \mathbb{I}^{\oplus 2}$ and we consider the following representation:
$$\rho=\rho_{\mathscr{F}, \mathcal{T}} : \mathrm{Gal}\left(\overline{\mathbf{Q}}/\mathbf{Q} \right) \rightarrow \mathrm{GL}_2\left(\mathbb{I} \right).$$
Write $\mathfrak{L}=\hat{G}_{\chi_1^{-1}\chi_2}(X)\mathbb{I}$ for short. The condition (P) enables us to define $\mathrm{Frac}\left(\mathbb{I}/\mathfrak{L} \right)$ and $\mathrm{Frac}\left(\mathbb{I}/\mathfrak{L} \right)$ is of characteristic zero by the Ferrero-Washington theorem. We denote by $\rho\ \mathrm{mod}\ \mathfrak{L}$ the representation as follows: $$\rho\ \mathrm{mod}\ \mathfrak{L} : \mathrm{Gal}\left(\overline{\mathbf{Q}}/\mathbf{Q} \right) \stackrel{\rho}{\rightarrow} \mathrm{GL}_2\left(\mathbb{I} \right) \stackrel{\mathrm{mod}\ \mathfrak{L}}{\longrightarrow} \mathrm{GL}_2\left(\mathbb{I}/\mathfrak{L} \right).$$ Since $\mathrm{tr}\rho\ \mathrm{mod}\ \mathfrak{L}$ is the sum of two characters, we have $\rho\ \mathrm{mod}\ \mathfrak{L}$ is reducible by the Brauer-Nesbitt theorem. Let $\left\{v_1, v_2 \right\}$ be a basis corresponding to $\rho\ \mathrm{mod}\ \mathfrak{L}$ such that $\left(\mathbb{I}/\mathfrak{L} \right)v_1$ is stable under $\rho\ \mathrm{mod}\ \mathfrak{L}$. Let $\tilde{v}_i \in \mathcal{T}$ be a lift of $v_i$ $(i=1, 2)$. Since $\mathbb{I}$ is complete under the $\mathfrak{m}$-adic topology, $\mathcal{T}$ is generated by $\tilde{v}_1$ and $\tilde{v}_2$ over $\mathbb{I}$. Since $\bigcap_{n} \mathfrak{m}^n=(0)$, we have $\mathcal{T}=\mathbb{I}\tilde{v}_1 \oplus \mathbb{I}\tilde{v}_2$. Thus $\mathcal{T}^{\prime}=\mathbb{I}\tilde{v}_1 \oplus \mathfrak{L}\tilde{v}_2$ is also a stable $\mathbb{I}$-free lattice and $\mathcal{T}/\mathcal{T}^{\prime} \cong \mathbb{I}/\hat{G}_{\chi_1^{-1}\chi_2}(X)\mathbb{I}.$

For any $\varphi \in \mathfrak{X}_{\mathbb{I}}$, we denote by $T, T^{\prime}$ the lattices $\mathcal{T} \otimes_{\mathbb{I}} \varphi(\mathbb{I}), \mathcal{T}^{\prime} \otimes_{\mathbb{I}} \varphi(\mathbb{I})$. Let $\sharp\mathcal{L}(\rho_{f_{\varphi}})=n+1$. Since $I\left(\rho_{\mathscr{F}} \right)=\hat{G}_{\chi_1^{-1}\chi_2}(X)\mathbb{I}$, we have 
$$T/T^{\prime}=\dfrac{\mathcal{T} \otimes_{\mathbb{I}} \varphi(\mathbb{I})} {\mathcal{T}^{\prime} \otimes_{\mathbb{I}} \varphi(\mathbb{I})}\cong \left(\mathcal{T}/\mathcal{T}^{\prime} \right) \otimes_{\mathbb{I}} \varphi(\mathbb{I}) \cong \left(\mathbb{I}/\hat{G}_{\chi_1^{-1}\chi_2}(X)\mathbb{I} \right) \otimes_{\mathbb{I}} \varphi(\mathbb{I}) \cong \varphi(\mathbb{I})/\varpi_{\varphi}^n.$$Thus $d\left([T], [T^{\prime}] \right)=n$. Since $\mathcal{L}\left(\rho_{f_{\varphi}} \right)$ is a segment by (6) of Proposition \ref{2.c} and Proposition \ref{2.8}, $[T]$ has exactly one neighbor in $\mathcal{L}\left(\rho_{f_{\varphi}} \right)$. Thus $T$ is a mod $\varpi_{\varphi}$ not semi-simple lattice by (3) of Proposition \ref{2.c}.

\end{proof}

Under the above preparation, we return to the proof of Corollary \ref{yandong3}.
\begin{pf3}
Let us take an $m \in \mathbf{Z}_{>0}$. Let $\zeta \in \mu_{p^{\infty}}$ such that $L_p \left(1-s, \chi_{\zeta}\chi_1^{-1}\chi_2\omega \right)$ has a zero in $\mathbf{Z}_p$. Then Corollary \ref{yandong2} (4) tells us that there exists a $\varphi \in \mathfrak{X}_{\mathbb{I}, \zeta}$ such that $\sharp\mathcal{L}(\rho_{f_{\varphi}})=n+1 > m$.

Now we fix such $\varphi$ and we denote by $\mathcal{P}=\mathrm{Ker}\ \varphi$. Let $\mathcal{T}$ be the stable lattice which satisfies the condition (F). We denote by $T=\mathcal{T} \otimes \varphi(\mathbb{I})$ and let $$\pi : \mathcal{T} \twoheadrightarrow \mathcal{T}\otimes\varphi(\mathbb{I})=T$$
be the reduction map. We have $T$ is a $\mathrm{mod}\ \varpi_{\varphi}$ not semi-simple by Lemma \ref{Maz}. Let $$\mathcal{L}\left(\rho_{f_{\varphi}} \right)=\Set{[T], [T_1], \cdots, [T_n]}$$ such that for any $1 \leq i \leq n$, $T/T_i \cong \varphi(\mathbb{I})/\left(\varpi_{\varphi} \right)^i$ as a $\varphi\left(\mathbb{I} \right)$-module. We denote by $\mathcal{T}_i=\pi^{-1}(T_i)$. Since $\mathcal{PT} \subset \mathcal{T}_i \subset \mathcal{T}$, $\mathcal{T}_i$ is a lattice. By the definition of $\mathcal{T}_i$ we have $\mathcal{T}_i$ is stable under the $\mathrm{Gal}\left(\overline{\mathbf{Q}}/\mathbf{Q} \right)$-action. Thus we obtain stable $\mathbb{I}$-lattices

$$\mathcal{T}\supset\mathcal{T}_1\supset\cdots\supset\mathcal{T}_n.$$
For $i \neq j$, if there exists an $\mathbb{I}[\mathrm{Gal}\left(\overline{\mathbf{Q}}/\mathbf{Q} \right)]$-isomorphism $\Xi : \mathcal{T}_i \stackrel{\sim}{\rightarrow} \mathcal{T}_j$, then $\Xi$ induces a $\varphi(\mathbb{I})[\mathrm{Gal}\left(\overline{\mathbf{Q}}/\mathbf{Q} \right)]$-isomorphism $$T_i \stackrel{\sim}{\rightarrow} T_j, v\otimes 1 \mapsto\Xi(v)\otimes 1$$ in $\mathcal{T}\otimes_{\mathbb{I}}\varphi(\mathbb{I})$. For $i \neq j$, $T_i$ and $T_j$ are non-isomorphic to each other by Proposition \ref{2.8}. This contradicts to our assumption. Hence $\mathcal{T}_i$ and $\mathcal{T}_j$ are non-isomorphic to each other and $\sharp\mathcal{L}(\rho_{\mathscr{F}}) \geq n+1 > m.$ This completes the proof of Corollary \ref{yandong3}. 

\end{pf3}

\begin{rem}
By Corollary \ref{m}, we also have $I\left(\rho_{\mathscr{F}} \right)=\hat{G}_{\chi_1^{-1}\chi_2}(X)\mathbb{I}$ is a prime ideal under the conditions (D), (R) and (P). Thus Corollary \ref{yandong3} is also satisfied if we assume the conditions (D), (R), (P) and (F). 
\end{rem}

\section{Examples}

Let $\mathcal{O}$ be the ring of integers of a finite extension of $\mathbf{Q}_p$ and $f \in S_k(\Gamma_0(M), \varepsilon, \mathcal{O})$ a newform. Assume that the eigenvalue $a(p, f)$ of $f$ for the Hecke operator $T_p$ is a $p$-adic unit. We define $f^{*} \in S_k(\Gamma_0(Mp), \varepsilon, \mathcal{O})$ by $f^{*}=f(q)-\beta f(q^p)$, where $\beta$ is the unique root of $x^2-a(p, f)x+\psi(p)p^{k-1}$ with $p$-adic absolute $|\beta| < 1$. We call this $f^{*}$ the $p$-stabilized newform associated to $f$. 

Let $(p, k_0)$ be the irregular pair such that $p\mid B_{k_0}$. We give two examples as follows: 

\begin{list}{}{}
\item[1.]$(p, k_0)=(691, 12)$. Let $\Delta \in S_{12}\left(\mathrm{SL}_2(\mathbf{Z}) \right)$ be the Ramanujan's cuspform. Since $\mathrm{dim}_{\mathbf{C}}S_{12}\left(\mathrm{SL}_2(\mathbf{Z}) \right)=1$, there exists an unique $\Lambda$-adic normalized Hecke eigen cusp form $\mathscr{F} \in S^{\mathrm{ord}}(\omega^{11}, \Lambda)$ such that $$\mathscr{F}\left(u^{10}-1 \right)=\Delta^{*}$$

and $\mathbf{T}(\omega^{11}, \Lambda)$ is isomorphic to $\Lambda$ (see \cite[\S 7.6]{H3}), where $\Lambda=\mathbf{Z}_p[[X]]$. Hence $I\left(\rho_{\mathscr{F}} \right)=\hat{G}_{\omega^{11}}(X)\mathbb{I}$ by Corollary \ref{m}. The ideal generated by the Iwasawa power series $\left(\hat{G}_{\omega^{11}}(X) \right)$ is equal to $\left(X-a_{\omega^{11}} \right)$ with $a_{\omega^{11}} \in p\mathbf{Z}_p \setminus p^2\mathbf{Z}_p$ which is calculated by Iwasawa-Sims (see \cite[\S1]{Wag}). Then we have the following statements:

\begin{list}{}{}
\item[(i)] $\sharp\mathcal{L}(\rho_{f_{\varphi}})$ is unbounded when $\varphi$ varies in $\mathfrak{X}_{\Lambda, 1}$ by (4) of Corollary \ref{yandong2}.
\item[(ii)] $\sharp\mathcal{L}(\rho_{f_{\varphi}})=2$ is constant when $\varphi$ varies in $\mathfrak{X}_{\Lambda}^{(0)}$ by (1) and (3) of Corollary \ref{yandong2}. 

\item[(iii)] For each $k \geq 2$, $\sharp\mathcal{L}(\rho_{f_{\varphi}})$ is bounded with maximum value $\mathrm{ord}_p(L_p(1-k, \chi\omega))+1$ when $\varphi$ varies in $\mathfrak{X}_{\Lambda, k}$ by (i) and (ii). 

\item[(iv)] Since $\mathbb{I}=\Lambda$ is a regular local ring, for a stable $\Lambda$-lattice $\mathcal{T}$, we have that $$\mathcal{T}^{**}=\mathrm{Hom}_{\Lambda}\left(\mathrm{Hom}_{\Lambda}\left(\mathcal{T}, \Lambda \right), \Lambda \right)$$ is a $\Lambda$-free lattice. Hence the condition (F) is satisfied and we have $\sharp\mathcal{L}(\rho_{\mathscr{F}})=\infty$ by Corollary \ref{yandong3}. 

\end{list}

\begin{rem}
\cite[Appendix II]{Maz} tells us that for the irregular pair $(p, k_0)$ with $p < 10^7$ and $k_0 < 8000$ such that $p\mid B_{k_0}$,  $\mathbf{T}(\chi, \Lambda)$ is isomorphic to $\Lambda$ except for $(p, k_0)=(547, 486)$. Hence we can apply Theorem \ref{yandong} (2) for these pairs. 
\end{rem}
\item[2.]$(p, k_0)=(547, 486)$. By \cite[Appendix II]{Maz}, there is a conjugate pair of $p$-stabilized newforms of weight 486 with the required Eisenstein congruence condition mod 547 and the corresponding Hida Hecke algebra $\mathbf{T}(\omega^{485}, \Lambda)$ is finite flat of rank two over $\Lambda$. We denote by $f_{486}^{*}, f_{486}^{\prime*}$ the corresponding newforms.

Let $\mathscr{F}$ (resp. $\mathscr{F}^{\prime}$) be the $\mathbb{I}$-adic normalized Hecke eigen cusp form associated to $f_{486}^{*}$ (resp. $f_{486}^{\prime*}$). Note that $\mathbb{I}$ is an integral closure of a quotient of $\mathbf{T}(\omega^{484}, \Lambda)$ by a minimum prime ideal of $\Lambda$ by the proof of \cite[\S 7.4, Theorem 7]{H3}. Hence $\mathrm{Frac}(\mathbb{I})$ is a quadratic extension of $\mathrm{Frac}(\Lambda)$. The ideal generated by the Iwasawa power series $\left(\hat{G}_{\omega^{485}}(X) \right)$ is equal to $\left(X-a_{\omega^{485}} \right)$ with $a_{\omega^{485}} \in p\mathbf{Z}_p \setminus p^2\mathbf{Z}_p$ which is calculated by Iwasawa-Sims (see also \cite[\S1]{Wag}). Then $\sharp\mathcal{L}(\rho_{f_{\varphi}}) \in \{2, 3\}$ when $\varphi$ varies in $\mathfrak{X}_{\mathbb{I}}^{(0)}$ by (1) of Corollary \ref{yandong2}. The condition (C) satisfied for $\mathscr{F}$ (this is because the Vandiver's conjecture is true for $p=547$), thus $I(\rho_{\mathscr{F}})$ is a principal ideal which is generated by a factor of $X-a_{\omega^{485}}$ in $\mathbb{I}$ by Corollary \ref{r}.
The same holds for $\mathscr{F}^{\prime}$.

\end{list}

\end{document}